\documentclass{agtart_a}
\pdfoutput=1

\usepackage[all]{xy}


\title{Categorical sequences}

\author{Rob Nendorf}
\givenname{Rob}
\surname{Nendorf}
\address{Department of Mathematics\\
Northwestern University\\\newline
Evanston, IL 60208-2730\\
USA}
\email{rnendorf@math.northwestern.edu}
\urladdr{}

\author{Nick Scoville}
\givenname{Nick}
\surname{Scoville}
\address{Department of Mathematics \\
Dartmouth College \\\newline
Hanover, NH 03755-3551\\
USA}
\email{nicholas.scoville@dartmouth.edu}
\urladdr{}

\author{Jeff Strom}
\givenname{Jeffrey}
\surname{Strom}
\address{Department of Mathematics \\
Western Michigan University \\\newline
Kalamazoo, MI 49008\\
USA}
\email{jeff.strom@wmich.edu}
\urladdr{}

\volumenumber{6}
\issuenumber{}
\publicationyear{2006}
\papernumber{30}
\startpage{809}
\endpage{838}

\doi{}
\MR{}
\Zbl{}

\keyword{categorical sequence}
\keyword{Lusternik--Schnirelmann category}
\keyword{CW skeleta}
\keyword{rational homotopy}

\subject{primary}{msc2000}{55M30} 
\subject{secondary}{msc2000}{55P62} 

\received{5 January 2006}
\revised{}
\accepted{23 April 2006}
\published{21 June 2006}
\publishedonline{21 June 2006}
\proposed{}
\seconded{}
\corresponding{}
\editor{}
\version{}

\arxivreference{}  



\let\xysavmatrix\xymatrix
\def\xymatrix{\disablesubscriptcorrection\xysavmatrix}
\AtBeginDocument{\let\bar\wbar}


\makeatletter
\def\cnewtheorem#1[#2]#3{\newtheorem{#1}{#3}[section]
\expandafter\let\csname c@#1\endcsname\c@thm}


\newtheorem{thm}{Theorem}[section]


\cnewtheorem{lem}[thm]   {Lemma}
\cnewtheorem{cor}[thm]   {Corollary}
\cnewtheorem{prop}[thm]  {Proposition}

\newcounter{foo}  
\newtheorem{conj}[foo]{Conjecture}

\theoremstyle{remark}
\cnewtheorem{rem}[thm]   {Remark}
\cnewtheorem{defn}[thm]  {Definition}
\cnewtheorem{authex}[thm]    {Example}

\makeautorefname{foo}{Conjecture}
\makeautorefname{authex}{Example}
\makeautorefname{defn}{Definition}

\makeatother  

\newcommand{\term}[1]   {\emph{#1}}

\newcommand{\inclds}     {\hookrightarrow}

\newcommand{\A}        {\mathcal{A}}
\newcommand{\B}        {\mathcal{B}}
\newcommand{\G}        {\mathcal{G}}
\newcommand{\M}        {\mathcal{M}}
\newcommand{\N}        {\mathcal{N}}

\newcommand{\ZZ}    {\mathbb{Z}}
\newcommand{\RR}    {\mathbb{R}}
\newcommand{\CC}    {\mathbb{C}}
\newcommand{\NN}    {\mathbb{N}}
\newcommand{\QQ}    {\mathbb{Q}}

\newcommand{\s}     {\Sigma}
\newcommand{\smsh}  {\wedge}
\newcommand{\om}    {\Omega}
\newcommand{\cross} {\times}
\newcommand{\wdg}   {\vee}
\newcommand{\of}    {\circ}
\newcommand{\id}    {\mathrm{id}}
\newcommand{\sseq}  {\subseteq}
\newcommand{\tensor}   {\otimes}

\newcommand{\Hom}   {\mathrm{Hom}}
\newcommand{\Ext}   {\mathrm{Ext}}
\newcommand{\hocolim}{\mathrm{hocolim}}

\newcommand{\wcat}  {\mathrm{wcat}}

\newcommand{\cat}   {\mathrm{cat}}
\newcommand{\nil}   {\mathrm{nil}}
\newcommand{\Mcat}   {\mathrm{Mcat}} 
\newcommand{\Qcat}   {\mathrm{Qcat}}
\newcommand{\wgt}   {\mathrm{wgt}}

\newcommand{\cp}    {\CC\mathrm{P}}

\newcommand{\map}   {\mathrm{map}}

\newenvironment{introthm}[1]
{\medskip \noindent \textbf{#1}    \it}
{\rm\smallskip}

\newcommand{\mmathand}{\qquad\mathrm{and}\qquad}

\newenvironment{problist}
{\begin{enumerate}}
{\end{enumerate}}


\begin{document}

\begin{asciiabstract}
We define and study the categorical sequence of a space, which is a new
formalism that streamlines the computation  of the Lusternik-Schnirelmann
category of a space X by induction on its CW skeleta.  The k-th term in
the categorical sequence of a CW complex X, \sigma_X(k), is the least
integer n for which cat_X(X_n) >= k.  We show that \sigma_X is a
well-defined homotopy invariant of X.

We prove that \sigma_X(k+l) >= \sigma_X(k) + \sigma_X(l),
which is one of three keys to the power of categorical sequences.
In addition to this formula, we provide formulas relating the categorical
sequences of spaces and some of their algebraic invariants, including
their cohomology algebras and their rational models; we  also find
relations between the categorical sequences of the spaces in a fibration
sequence and give a preliminary result on the categorical sequence of a
product of two spaces in the rational case.  We completely characterize
the sequences which can arise as categorical sequences of formal rational
spaces.  The most important of the many examples that we offer is a simple
proof of a theorem of Ghienne:  if X is a member of the Mislin genus of
the Lie group Sp(3), then cat(X) = cat(Sp(3)).
\end{asciiabstract}

\begin{htmlabstract} 
We define and study the categorical sequence of a space, which is a new
formalism that streamlines the computation  of the Lusternik&ndash;Schnirelmann
category of a space X by induction on its CW skeleta.  The
k<sup>th</sup> term in the categorical sequence of a CW complex X,
&sigma;<sub>X</sub>(k), is the least integer n for which cat<sub>X</sub>(X<sub>n</sub>)&ge;k.
We show  that &sigma;<sub>X</sub> is a well-defined homotopy invariant of X.
We prove that &sigma;<sub>X</sub>(k+l)&ge;&sigma;<sub>X</sub>(k)+&sigma;<sub>X</sub>(l), which is
one of three keys to the power of categorical sequences.  In addition to
this formula, we provide formulas relating the categorical sequences of
spaces and some of their algebraic invariants, including their cohomology
algebras and their rational models; we  also find relations between the
categorical sequences of the spaces in a fibration sequence and   give
a preliminary result on the categorical sequence of a product of two
spaces in the rational case.  We completely characterize the sequences
which can arise as categorical sequences of formal rational spaces.
The most important of the many examples that we offer is a simple proof
of a theorem of Ghienne:  if X is a member of the Mislin genus of the
Lie group Sp(3), then cat(X) = cat(Sp(3)).
\end{htmlabstract}

\begin{abstract} 
We define and study the categorical sequence of a space, which is a new
formalism that streamlines the computation  of the Lusternik--Schnirelmann
category of a space $X$ by induction on its CW skeleta.  The
$k^{\mathrm{th}}$ term in the categorical sequence of a CW complex $X$,
$\sigma_X(k)$, is the least integer $n$ for which $\mathrm{cat}_X(X_n) \geq k$.
We show  that $\sigma_X$ is a well-defined homotopy invariant of $X$.
We prove that $\sigma_X(k+l) \geq \sigma_X(k) + \sigma_X(l)$, which is
one of three keys to the power of categorical sequences.  In addition to
this formula, we provide formulas relating the categorical sequences of
spaces and some of their algebraic invariants, including their cohomology
algebras and their rational models; we  also find relations between the
categorical sequences of the spaces in a fibration sequence and   give
a preliminary result on the categorical sequence of a product of two
spaces in the rational case.  We completely characterize the sequences
which can arise as categorical sequences of formal rational spaces.
The most important of the many examples that we offer is a simple proof
of a theorem of Ghienne:  if $X$ is a member of the Mislin genus of the
Lie group $Sp(3)$, then $\mathrm{cat}(X) = \mathrm{cat}(Sp(3))$.
\end{abstract}

\maketitle


\section*{Introduction}
 
The \term{Lusternik--Schnirelmann category} of a topological space
$X$ is the least integer $k$ for which $X$ has an open cover
$X = X_0 \cup X_1 \cup \cdots \cup X_k$ with the property that
each inclusion map $X_j \inclds X$ is homotopic to a constant map;
it is denoted $\cat(X)$.   
This   homotopy invariant of topological spaces was first introduced
by Lusternik and Schnirelmann in 1934 as a tool to use in studying 
functions on (compact) manifolds:  a smooth function $f \co M \to \RR$
must have at least $\cat(M) + 1$ critical points.

If $X$ is a CW complex,  then 
$X_{n} = X_{n-1} \cup_\alpha (n\mathrm{-cells})$, and therefore
$\cat(X_n) \leq \cat(X_{n-1}) + 1$.  Berstein and Hilton asked \cite{B-H}
what conditions must be placed on the attaching map $\alpha$ in 
order to guarantee that equality holds in this upper bound;
the answer is  that equality holds   when 
a certain set of generalized Hopf invariants does not contain the 
trivial map $*$.   Thus it is possible, at least in principle,
to compute the Lusternik--Schnirelmann category of a 
finite-dimensional CW complex inductively up its skeleta.

It was shown by the third author \cite{Strom} that the Hopf sets
for lower-dimensional skeleta partially determine the Hopf sets for
high-dimensional skeleta.  In actual computations, this makes it possible
to `bootstrap' up from relatively simple low-dimensional results to
(apparently) difficult high-dimensional calculations.  Our goal in
this paper is to establish a convenient formalism for doing category
calculations while making use of all low-dimensional information.

This is done via the \term{categorical sequence} of a space $X$,
which is a function $\sigma_X \co \NN\to \NN\cup\{\infty\}$ defined by 
$$
\sigma_X(k) = \inf\{  n \, |\,  \cat_X(X_n) \geq k\} ,
$$
where $\cat_X(X_n)$ is the category of $X_n$ relative to  $X$
(see Definition 4)\footnote{In his encyclopedic paper \cite{Fox}, Fox
defined a \textit{categorical sequence} to be a certain kind of
filtration of a space;  this idea was used in his proof of the 
product inequality $\cat(X\cross Y) \leq \cat(X) + \cat(Y)$.  
Our use of the term
 `categorical sequence' 
for a completely different idea should carry no risk of confusion,
since the earlier notion is no longer used, at least in the
homotopy theory of Lusternik--Schnirelmann category (but see
Cicorta\c{s} \cite{Dude1,Dude2} for an equivariant version).}.
It is shown in Propositions \ref{prop:invariant} and  \ref{prop:WellDefined} 
that $\sigma_X$ is a well-defined
homotopy invariant of $X$; ie,  when $n$ is larger than
the connectivity of  $X$,  $\cat_X(X_n)$ depends only 
on $n$ and the homotopy type of $X$, and not on any choices made in 
constructing a CW decomposition of $X$.  
If $X$ is finite-dimensional, then $\sigma_X$ determines $\cat(X)$;
examples due to Roitberg \cite{Roitberg} show that this is not 
true for infinite-dimensional spaces.
In any case, the categorical sequence of $X$ holds a wealth of useful 
information.      

Though we are not directly concerned with the
applications of Lus\-ter\-nik\---\-Schnir\-el\-mann
category to critical point theory in this paper, 
categorical sequences could play a useful role there.
For example, in the study of the $n$--body problem,  
one is often interested in infinite-dimensional Sobolev 
spaces $W$; 
in order to apply the Lusternik--Schnirelmann method in this situation,
it is necessary to find compact subsets $K\sseq W$ such that the  
relative category $\cat_W(K)$ is large 
(see Ambrosetti and Zelati \cite[Remarks 2.15 and 3.5]{A-Z}, Fadell and
Husseini \cite[Theorem 4.6]{Fa-Hu}, or Rabinowitz \cite{Rabinowitz},  
for example).   
The categorical sequence of $W$ gives \textit{lower}
estimates on the dimension of such subsets.  If $\sigma_W(k) = n$, then 
$\cat_W(W_{n-1}) < k$;  if $\dim(K) < n$ then $K$ can be deformed 
into   $W_{n-1}$, and so $\cat_W(K) <   k$. 

Our theoretical results establish formulas for calculation with 
categorical sequences.  Some of the statements make use of 
another sequence, the \term{product length sequence} of a 
non-negatively graded
commutative  algebra $\A$, defined by setting $\sigma_\A(k)$
to be the least dimension $n$ for which 
the $n^\mathrm{th}$ grading $\A^n$ of $\A$
contains a nontrivial 
$k$--fold product.  

\begin{introthm}{\fullref{prop:cuplength}}
For any space $X$ and any  ring $R$, 
$\sigma_X \leq \sigma_{H^*(X;R)}$.
\end{introthm}

\noindent
We also   estimate the categorical sequence
of a rational space in terms of any of its models.

\begin{introthm}{\fullref{prop:model}}
If $X$ is a simply-connected rational space and $\A$ is
any model   for  $X$, then
$\sigma_X \geq \sigma_\A$.
\end{introthm}
  
 \noindent 
Recall that a simply-connected rational space $X$ is \textit{formal}
if its cohomology algebra $H^*(X)$, with trivial differential, is a model for $X$.
Thus we have the following computation for formal spaces.

\begin{introthm}{\fullref{prop:formal}}
If $X$ is a simply-connected formal rational space, then
 $\sigma_X = \sigma_{H^*(X)}$.
\end{introthm}

\noindent
More generally, we completely determine the sequences $\sigma$
which can arise as the categorical sequences of 
simply-connected formal rational spaces.

\begin{introthm}{\fullref{thm:formal}}
The following conditions on a sequence $\sigma$ with $\sigma(1)>1$
are equivalent:
\begin{problist}
\item 
$\sigma = \sigma_\A$ for some commutative graded algebra $\A$,
\item  
$\sigma(k+1) \geq  {k+1\over k}\thinspace \sigma(k)$ for each $k$,  
\item  
$\sigma = \sigma_W$ where 
 $W= \bigvee P_i$ and   $P_i = \prod S^{n_j}$ 
is a product of spheres, and 
\item 
$\sigma= \sigma_X$ for some formal rational space $X$.
\end{problist}
\end{introthm}

The keys to the computational power of categorical sequences, though, 
are the three properties listed in the following theorem.   
In order to prove parts (b) and (c) for \textit{all} spaces
(and not just spaces of finite type, say), we have to 
use a set-theoretical framework in which Whitehead's Problem
(which asks: does $\Ext(A,\ZZ) =0$ imply $A$ is free?) has a positive solution.
See \fullref{sec1.1} and \fullref{lem:homsection} for   details.

\begin{introthm}{\fullref{thm:subadditive}}
For any space $X$,
\begin{problist}
\item  $\sigma_X(k+l) \geq \sigma_X (k) + \sigma_X(l)$, 
\item  if $X$ is simply-connected  and $\sigma_X(k) = n$, then 
$H^n(X;A)\neq 0$ for some coefficient group $A$, and 
\item  if equality occurs in (a) and $X$ is simply-connected, 
then the cup product 
$$
H^k(X;A) \otimes H^l(X;B) \to H^{k+l}(X;A\otimes B)
$$
is nontrivial for some choice of coefficients.
\end{problist}
\end{introthm}

The point we hope to make in this paper
is that calculation with sequences is no harder than
calculation of category;  indeed, the extra information contained
in the sequence, together with \fullref{thm:subadditive},
 can greatly facilitate computations.  
To illustrate this point, 
let $X$ be any  simply-connected space such that 
$H^*(X;\ZZ) \cong H^*(Sp(3);\ZZ) = \Lambda(x_3,x_7,x_{11})$. 
The categorical
sequence $\sigma_X$ clearly  has $\sigma_X(1) = 3$ and 
$\sigma_X(2) \geq 7$ by \fullref{thm:subadditive}(b).  
By \fullref{thm:subadditive}(a),
$\sigma_X(4)\geq \sigma_X(2) + \sigma_X(2) \geq 14$.
Furthermore, $\sigma_X(4) > 14$ by \fullref{thm:subadditive}(c),
because
the cup product $H^7(X) \tensor H^7(X) \to H^{14}(X)$ is trivial.
Now we have $\sigma_X(4) \geq 18$ by \fullref{thm:subadditive}(b),
and hence  $\sigma_X(5) \geq \sigma_X(4) + \sigma_X(1) = 21$
by \fullref{thm:subadditive}(a).
Since $X$ is $21$--dimensional (up to homotopy), this proves 
that $\cat(X) \leq 5$ by \fullref{thm:subadditive}(b).
We will see in \fullref{thm:genus} below that this 
calculation constitutes a simple proof of a result of Ghienne \cite{Ghienne}
about the Mislin genus of $Sp(3)$. 

 \fullref{thm:subadditive} can also be used to prove a generalization
of a somewhat obscure result of Ganea \cite{Ganea}.

\begin{introthm}{\fullref{cor:Ganea}}
Let $X$ be  simply-connected  and of finite type with
 $\sigma_X(k) = n$.  If  there are  integers
$0<a_1< a_2<  \cdots <   a_l$ such that 
$$
 \{ n \,  |\, \wwtilde{H}^n(X;G) \neq 0\ \mathrm{for\ some}\ G \} \sseq  
I_1 \cup 
I_2 \cup \cdots \cup
I_l  
$$
where $I_j = [a_j, a_j + (n-1)]$
(brackets denote closed  intervals in $\RR$), 
then $\cat(X) < k (l+1)$.   
\end{introthm}
 
The importance
of \fullref{cor:Ganea}
is not the result as such.  Rather, it is the fact that, since
it simply encodes an elementary computation with sequences,
the result can be safely disregarded without losing computational power.
Our proof is \textit{completely different} from the one given in 
\cite{Ganea}.
Ganea's proof makes use of the Blakers--Massey theorem:
certain cofiber sequences are treated as fibration sequences.  Our argument uses 
\fullref{thm:subadditive}, which in turn rests on a much more
elementary fact:  the
factorization $\Delta_{k+l} = (\Delta_k\cross \Delta_l) \of \Delta_2$
of  diagonal maps.
However, Ganea's theorem also applies to the strong category of $X$,
while ours only works for ordinary category.

One of our most pleasing general results gives formulas
relating the categorical sequences of the spaces in a fibration
sequence. 

\begin{introthm}{\fullref{thm:fibseq}}
Let  
$
\xymatrix@1{
F\ar[r]^q & E \ar[r]^p & B
}
$
be a fibration sequence and write $a= \cat(q) \leq \cat(F)$
and $b = \cat(p) \leq \cat(B)$.  Then
\begin{problist}
\item
$\sigma_E(k(a+1)  ) \geq \sigma_B(k)$, and 
\item
$\sigma_E(k(b+1)) \geq \sigma_F(k)$.
\end{problist}
\end{introthm}

\noindent
As a corollary to \fullref{thm:fibseq} we obtain the following
elaboration of the celebrated Mapping Theorem from the rational
theory of Lusternik--Schnirelmann category.

\begin{introthm}{\fullref{prop:mapping}}
Let $f:X\to Y$ be a map between simply-connected rational spaces which 
induces an injective map $f_*: \pi_*(X)   \to \pi_*(Y)$.
Then 
$\sigma_X \geq \sigma_Y$.
\end{introthm}

Finally, we address products.   To state our result  (and  our conjectures),
we construct, for sequences  $\sigma$ and $\tau$, 
a `product sequence' $\sigma*\tau$   defined by 
$
\sigma*\tau (k) = \min \{ \sigma(i) + \tau(j) \, |\, i+ j = k\}.
$
It is not hard to see, using \fullref{prop:model},
 that if $X$ and $Y$ are  simply-connected formal rational spaces, 
then $\sigma_{X\cross Y } = \sigma_X * \sigma_Y$.  We conjecture
that this equation holds in general for simply-connected rational spaces.
So far however, the best we have been able to do is an inequality.

\begin{introthm}{\fullref{thm:product}}
For  simply-connected rational   $X$ and $Y$,  
$
\sigma_{X\cross Y} \leq  \sigma_X * \sigma_Y
$.
\end{introthm}

\noindent
This inequality is certainly not true in general for non-rational spaces,
as the examples of Iwase \cite{Iwase} show.  However, we conjecture
that the \textit{reverse} inequality 
$\sigma_X * \sigma_Y \leq \sigma_{X\cross Y}$
is valid,  not only   for rational spaces,
but for \textit{all} spaces. 

\subsubsection*{Acknowledgements}
We thank Don Stanley, Martin Arkowitz, Yves F\'elix, Daniel
Tanr\'e  and Terrell Hodge for their interest and advice at various
stages of this project.

\section{Preliminaries}

In this section we establish the basic notation and 
concepts that will be used in the body of the paper.

\subsection{Basics}
\label{sec1.1}

We work with pointed spaces and maps;  we use $*$
to denote the one point space or any trivial map.  
We use $\id_X \co X\to X$ to denote the identity map
and $\Delta_k \co X\to X^k$ to denote the diagonal map 
$\Delta_k(x) = (x,x,\ldots, x)$.
The symbol $\simeq$ denotes homotopy equivalence of
spaces or homotopy of maps.   All solid arrow diagrams in this
paper are (homotopy) commutative.  

If $S$ is a set of real numbers, then $\inf(S)$ is the 
infimum of  $S$.  We adopt the usual
convention that $\inf(\varnothing) = \infty$.

\subsubsection*{Set theory}
Whitehead's Problem asks:
if $A$ is an abelian group such that  $\Ext(A,\ZZ) = 0$, does it follow that $A$ is free? 
The answer is `yes' if $A$ is finitely generated.
Shelah has shown that  the general problem is undecidable
in ordinary ZFC set theory, but  the answer is `yes' if G\"odel's
Axiom of Constructibility is assumed (see Shelah \cite{Shelah}). 
In order to avoid `unnecessary' hypotheses in 
\fullref{lem:homsection} and \fullref{thm:subadditive}
below,  
\begin{enumerate}
\item[$\star$]
 we  work in a set theory where
$\Ext(A,\ZZ) = 0$ implies that $A$ is   free.
\end{enumerate}
For those uncomfortable with this assumption, 
we emphasize that {ordinary ZFC set theory is sufficient
to prove \fullref{lem:homsection} and \fullref{thm:subadditive}
when  $H^n(X;\ZZ)$ is finitely generated for each $n$.}

\subsection{Skeleta}

We are concerned with the Lusternik--Schnirelmann 
category of the CW skeleta of a space.  It will simplify 
some of our later work to use the following
slightly abstract notion of skeleton.

\begin{defn}
An \term{$n$--skeleton} for a space $X$ is a  map
$i \co X_n\to X$, where $X_n$ is a CW complex such that  
\begin{problist}
\item  $X_n$ is $n$--dimensional (up to homotopy), and 
\item  $i$ is an $n$--equivalence.
\end{problist}
\end{defn}

\noindent
This definition is justified by the observation that
an $n$--skeleton $i \co X_n\to X$ can be taken as the 
$n^{\mathrm{th}}$ CW skeleton of a CW replacement 
for $X$.

The following result will help us to recognize skeleta.
We omit the proof.

\begin{lem}\label{lem:skeleta}
Let $i \co A\to X$ where $A$ and $X$ are simply-connected.
Then $i$ is an $n$--skeleton for $X$ if and only if 
\begin{problist}
\item
$H^*(A) = 0$ for $*>n$ in all coefficients, and 
\item
the induced map $i^* \co H^*(X) \to H^*(A)$ 
is an isomorphism for $*< n$ and is injective for $*= n$
in all coefficients.
\end{problist}
\end{lem}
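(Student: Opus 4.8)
The plan is to verify the two directions separately, using the standard correspondence between $n$--skeleta of $X$ and $n$--dimensional $n$--equivalences into $X$, together with the fact that for simply-connected spaces $n$--equivalences are detected by cohomology in a range. First I would recall that, since $A$ and $X$ are simply-connected, a map $i\co A \to X$ is an $n$--equivalence precisely when $i_* \co \pi_k(A) \to \pi_k(X)$ is an isomorphism for $k < n$ and an epimorphism for $k = n$; by the relative Hurewicz theorem this is equivalent to $H_k(i;\ZZ)$ being an isomorphism for $k < n$ and an epimorphism for $k = n$, i.e.\ to the homotopy cofiber $C_i$ being $n$--connected. Dually, by the universal coefficient theorem, $C_i$ is $n$--connected if and only if $\widetilde H^k(C_i; A) = 0$ for $k \le n$ in all coefficient groups $A$, which unravels (via the long exact sequence of the pair, or the cofiber sequence) to the statement that $i^* \co H^*(X) \to H^*(A)$ is an isomorphism for $* < n$ and injective for $* = n$ in all coefficients. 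This is exactly condition (b).

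Next I would handle condition (a). If $i\co A \to X$ is an $n$--skeleton, then by definition $A$ is $n$--dimensional up to homotopy, so $A$ is homotopy equivalent to an $n$--dimensional CW complex and hence $H^*(A) = 0$ for $* > n$ in every coefficient group; this gives (a). Conversely, suppose (a) and (b) hold. Condition (b) shows, by the discussion above, that $i$ is an $n$--equivalence. For the dimension condition, I would invoke the fact that a simply-connected space whose cohomology vanishes above degree $n$ in all coefficients has the homotopy type of an $n$--dimensional CW complex: build a minimal CW model for $A$ cell by cell and observe that the vanishing of $H^{n+1}(A; A)$ for all coefficient groups $A$ forces the cellular chain complex of the minimal model to be concentrated in degrees $\le n$, so no cells of dimension $> n$ are needed. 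Hence $A$ is $n$--dimensional up to homotopy, and $i$ is an $n$--skeleton.

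The step I expect to be the main obstacle is the converse direction of the dimension claim: deducing that $A$ has the homotopy type of an $n$--dimensional complex purely from the vanishing of $\widetilde H^{>n}(A; -)$ in all coefficients. The subtlety is that vanishing of \emph{ordinary} (integral) cohomology above degree $n$ is not by itself enough to bound the geometric dimension — one must know it with \emph{all} coefficients, and then argue that in the minimal CW structure the top nonzero chain group is detected by cohomology with coefficients in that chain group itself (or in $\ZZ$ together with an $\Ext$ argument). This is precisely the kind of place where the set-theoretic hypothesis $(\star)$ from \fullref{sec1.1} — that $\Ext(A,\ZZ) = 0$ implies $A$ free — is the relevant tool for handling infinitely generated homology groups; with that in hand the argument is routine, and in the finite-type case it is classical. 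Everything else is a straightforward application of the relative Hurewicz and universal coefficient theorems, which is why the paper is content to omit the proof.
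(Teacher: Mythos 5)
The paper explicitly omits the proof of \fullref{lem:skeleta}, so there is no official argument to compare yours against line by line; what you have written is, in substance, the standard argument, and it is correct. Replacing $i$ by a cofibration, condition (b) in all coefficients is equivalent (via the long exact sequence of the pair and the universal coefficient theorem) to the vanishing of $H_k(X,A;\ZZ)$ for $k\leq n$, which by the relative Hurewicz theorem for this simply-connected pair is equivalent to $i$ being an $n$--equivalence; and condition (a) together with simple connectivity is equivalent to $A$ having the homotopy type of an $n$--dimensional complex. Two small points are worth tightening. First, the phrase ``minimal CW model'' is looser than you need (such models need not exist without freeness hypotheses in every degree); the clean route to the dimension bound is the one the paper itself uses in \fullref{lem:homsection}, namely the Brown--Copeland homology decomposition (or Wall's finiteness argument): from $H^*(A;G)=0$ for $*>n$ and all $G$ one extracts $H_k(A;\ZZ)=0$ for $k>n$ and $H_n(A;\ZZ)$ free, whence $A\simeq A(n)$, a complex built with cells of dimension at most $n$. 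Second, the set-theoretic hypothesis $(\star)$ is not actually needed here: since the cohomological vanishing is assumed with \emph{all} coefficient groups, the universal coefficient theorem gives $\Ext(H_n(A;\ZZ),G)=0$ for every $G$, so $H_n(A;\ZZ)$ is projective and hence free already in ordinary ZFC; Whitehead's Problem only becomes relevant when one knows vanishing of $\Ext$ against $\ZZ$ alone.
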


When we are working with rational spaces (see F\'elix, Halperin and Thomas
\cite{FHT0})  we will want our
skeleta to also be rational spaces.  Unfortunately, this won't
always happen;  for example, the inclusion
$\bigvee_{n=1}^\infty  S^n \inclds  S^n_{\QQ}$
is an $n$--skeleton for the rational $n$--sphere.  We avoid
this problem by defining a \term{rational $n$--skeleton} of
a simply-connected rational space $X$ to be a map $i \co X_n \to X$ where
$X_n$ is a simply-connected rational space such that
\begin{problist}
\item
$H^*(X_n;\QQ) = 0$ for $*>n$, and 
\item
the induced map $i^* \co H^*(X;\QQ) \to H^*(X_n;\QQ)$ 
is an isomorphism for $*< n$ and is injective for $*= n$.
\end{problist}
Rational $n$--skeleta are plentiful:
if $X$ is a rational space and $X_n$ is an (integral)
$n$--skeleton of $X$, then $(X_n)_\QQ$ is a rational $n$--skeleton of $X$.
We make a standing convention
that if a space $X$ is assumed to be rational, then whenever we
refer to an $n$--skeleton of $X$,  we actually mean a
\textit{rational}  $n$--skeleton.

The proof of part (b) of our next result in full generality
depends on a positive solution to Whitehead's Problem.

\begin{lem}\label{lem:homsection}
Let $X$ be a simply-connected space.
\begin{problist}
\item
If $H_n(X;\ZZ)$ is free abelian, then $X$ has an $n$--skeleton 
$i \co X_n\to X$ such that $i^* \co H^*(X) \to H^*(X_n)$ is an isomorphism
for $*\leq n$,
\item 
If $H^n(X;A) = 0$ for all coefficient groups $A$, then $X$
has an $(n{-}1)$--dimensional $n$--skeleton.
\end{problist}
The corresponding statements also hold for all simply-connected 
rational spaces. 
\end{lem}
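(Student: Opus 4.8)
The plan is to build the required skeleton by hand using attaching cells to kill homology, and then use the hypotheses to control cohomology. First I would treat part (a). Since $X$ is simply-connected, I can choose a CW replacement for $X$ and let $X_n$ denote its $n^{\mathrm{th}}$ skeleton, with $i\co X_n\to X$ the inclusion; this is an $n$--skeleton in the sense of \fullref{lem:skeleta}. The issue is the behavior of $i^*$ in degree $n$. The long exact sequence of the pair $(X,X_n)$, together with $H_j(X,X_n)=0$ for $j\le n$, gives $H_{n+1}(X,X_n)\twoheadrightarrow H_n(X_n)\to H_n(X)\to 0$, so $H_n(X_n)\to H_n(X)$ is surjective with kernel a quotient of $H_{n+1}(X,X_n)$, which is free abelian (it is the group of $(n{+}1)$--cellular chains). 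When $H_n(X;\ZZ)$ is free abelian, I would modify the standard construction: instead of attaching all the $(n{+}1)$--cells needed to build $X$, attach only enough $(n{+}1)$--cells to $X_n$ to split off the kernel, i.e. choose $(n{+}1)$--cells whose boundaries form a basis for $\ker(H_n(X_n)\to H_n(X))$ — this kernel is free because it is a subgroup of a free abelian group — and let $X_n'$ be the resulting complex. Then $H_n(X_n')\xrightarrow{\cong}H_n(X)$ and $H_j(X_n')=0$ for $j>n$ and $H_j(X_n')\cong H_j(X)$ for $j<n$, so by the Universal Coefficient Theorem $i^*\co H^*(X)\to H^*(X_n')$ is an isomorphism for $*\le n$ in all coefficients, as required; and $X_n'$ is still $n$--dimensional, hence an $n$--skeleton.

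For part (b), I would start from the $n$--skeleton $X_n\to X$ produced in part (a) — note that $H^n(X;A)=0$ for all $A$ forces $H_n(X;\ZZ)$ to be free abelian (indeed both $\Hom(H_n(X),A)$ and $\Ext(H_{n-1}(X),A)$ must vanish for all $A$, so $H_n(X)$ is torsion-free with $\Hom(H_n(X),\ZZ)=0$, hence zero if finitely generated and zero in general under hypothesis $\star$), so part (a) applies. Actually more directly: $\Ext(H_{n-1}(X),A)=0$ for all $A$ forces $\Ext(H_{n-1}(X),\ZZ)=0$, whence $H_{n-1}(X)$ is free by the set-theoretic assumption $\star$; and $\Hom(H_n(X),\ZZ)=0$ with $H_n(X)$ torsion-free forces $H_n(X)=0$. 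So $H_n(X_n)\cong H_n(X)=0$ and $X_n$ is an $n$--dimensional CW complex with trivial top homology. A finite-dimensional CW complex with $H_n=0$ in its top degree can be replaced, up to homotopy, by an $(n{-}1)$--dimensional complex: the cellular chain complex has $C_n\xrightarrow{\partial}C_{n-1}$ injective with free cokernel (cokernel is a subgroup of $C_{n-1}/\partial C_n \oplus$ nothing — more carefully, $\mathrm{im}\,\partial$ is a direct summand of $Z_{n-1}$ since $H_{n-1}$ is free), so one can perform cellular trading / Whitehead's technique to cancel the $n$--cells against an equal number of $(n{-}1)$--cells, producing a homotopy equivalent $(n{-}1)$--dimensional complex $X_n'$; the composite $X_n'\simeq X_n\to X$ is still an $n$--equivalence, giving the desired $(n{-}1)$--dimensional $n$--skeleton.

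The main obstacle is the set-theoretic point in part (b): concluding that $H_{n-1}(X;\ZZ)$ is free from the vanishing of $\Ext(H_{n-1}(X),A)$ for all $A$ — equivalently from $\Ext(H_{n-1}(X),\ZZ)=0$ — is exactly Whitehead's Problem, which is why the hypothesis $\star$ from \fullref{sec1.1} is invoked; in ordinary ZFC this step goes through only when the homology groups are finitely generated. Everything else is standard cellular homotopy theory (obstruction-free cell trading, the Universal Coefficient Theorem). For the rational statements, the same arguments apply verbatim working in the category of simply-connected rational spaces: rational homology groups are $\QQ$--vector spaces, so the freeness issues evaporate entirely and no set theory is needed; one only needs that rational $n$--skeleta exist, which was noted just before the lemma.
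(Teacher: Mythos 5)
Your overall strategy works, but one step as written is false. In part (b) you twice deduce $H_n(X;\ZZ)=0$ from ``$H_n(X;\ZZ)$ is torsion-free and $\Hom(H_n(X;\ZZ),\ZZ)=0$.'' That implication fails: $\QQ$ is torsion-free with $\Hom(\QQ,\ZZ)=0$, and the hypothesis $\star$ (which concerns $\Ext(-,\ZZ)$, not $\Hom(-,\ZZ)$) does not rescue it. Non-finitely-generated divisible homology is exactly the situation the lemma must cover (the rational case!), so you cannot specialize the Universal Coefficient information to $A=\ZZ$ here. The correct move, which is the one the paper makes, is to use the full strength of ``$\Hom(H_n(X;\ZZ),A)=0$ for \emph{all} $A$'' and take $A=H_n(X;\ZZ)$ itself: if this group were nonzero, $\Hom$ would contain the identity. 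Your treatment of $H_{n-1}$ via $\Ext(-,\ZZ)=0$ and Whitehead's Problem is exactly right and matches the paper. A second, smaller slip: in part (a), after attaching $(n{+}1)$--cells to kill $\ker(H_n(X_n)\to H_n(X))$, the complex $X_n'$ is literally $(n{+}1)$--dimensional; you need to observe that it is $n$--dimensional \emph{up to homotopy}, either by citing \fullref{lem:skeleta} (its cohomology vanishes above $n$ in all coefficients because $H_{n+1}(X_n')=0$ and $H_n(X_n')\cong H_n(X)$ is free) or by the same cell-trading you invoke in (b).

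Modulo these repairs your route is genuinely different from the paper's. You build the skeleton by hand: cellular chain-level bookkeeping, attaching cells along spherical classes generating the kernel (which does require a word about why those classes are spherical --- relative Hurewicz, or the fact that they are sums of attaching maps of the $(n{+}1)$--cells), and a Wall-style cell-trading argument to lower the dimension when the top homology vanishes. The paper instead invokes the Brown--Copeland homology decomposition $M(H_n(X;\ZZ),n-1)\to X(n-1)\to X(n)$, which packages all of this at once: when $H_n$ is free the Moore space is a wedge of spheres and $X(n)$ is obtained from $X(n-1)$ by attaching $n$--cells; when $H_n=0$ the Moore space is a point and $X(n)\simeq X(n-1)$, giving the $(n{-}1)$--dimensional $n$--skeleton with no cell trading at all. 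Your approach is more elementary and self-contained; the paper's is shorter and pushes the CW combinatorics into a citation. Both reduce part (b) to part (a) plus the same Universal Coefficient / Whitehead's Problem analysis, and in both the rational case is easier since $\QQ$--vector spaces are free.
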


 \begin{proof}
Write $M(G,n)$ for the Moore space with
$$H_n(M(G,n);\ZZ) \cong G
  \quad\text{and}\quad
  H_i(M(G,n);\ZZ) = 0 \quad\text{for } i\neq 0,n.$$
When $G$ is free abelian, we   take $M(G,n) = \bigvee S^n$.
According to Brown and Copeland \cite{BrownCopeland},
any simply-connected  space $X$ admits a homology decomposition,
ie, a sequence of CW complexes  $X(n)$ which are related to one another
by cofiber sequences $M_{n-1} \to X(n-1) \to X(n)$
(where $M_{n-1} = M(H_n(X;\ZZ), n-1)$)
and satisfy $X\simeq \hocolim_n \, X(n)$.
The inclusion map $X(n) \to X$ induces isomorphisms on
integral homology
through dimension $n$, and $H_k(X(n);\ZZ) = 0$ for $k>n$.

With the CW decomposition inherited from the colimit, 
$X_n \sseq X(n) \sseq X_{n+1}$ for each $n$.
If $H_n(X;\ZZ)$ is free abelian
then $X(n) = X(n-1) \cup (n \mathrm{-cells})$,
so $X(n)$ is $n$--dimensional and hence $X_n = X(n)$.
Thus $X(n)$ is the desired $n$--skeleton of $X$.

To prove (b), assume that $H^n(X;A) = 0$ for all $A$.
Using the Universal Coefficient isomorphism 
(see Switzer \cite[Corollary 13.11]{Switzer}), we obtain
$$
\Hom(H_n(X;\ZZ), A) = 0 \mmathand \Ext(H_{n-1}(X;\ZZ);A ) = 0 
$$
for all $A$.
We claim that (i) $H_n(X;\ZZ) = 0$ and (ii) $H_{n-1}(X;\ZZ)$ is free
abelian.   To prove (i), we let $A = H_n(X;\ZZ)$; if $A$
were nonzero, then $\Hom ( H_n(X;\ZZ),A)$ would be nonzero 
(since it contains the identity map), thereby contradicting the assumption.
For (ii), we set $A = \ZZ$;  now
$\Ext(H_{n-1}(X;\ZZ),\ZZ) = 0$, and by Whitehead's Problem, 
we conclude that $H_{n-1}(X;\ZZ)$ is free.

Now apply part (a) to conclude
that  $X(n-1)$ is an $(n-1)$--dimensional $(n-1)$--skeleton
and $X(n)$ is an $n$--skeleton of $X$.   
Since $H_n(M(G,n);\ZZ) = 0$, we have
$M_{n-1} \simeq *$,  so $X(n) \simeq X(n-1)$, and hence $X(n)$ is an 
 $(n-1)$--dimensional $n$--skeleton for $X$.
\end{proof}

\subsection[Lusternik-Schnirelmann category]{Lusternik--Schnirelmann category}

We make use of three equivalent definitions of the Lusternik--Schnirelmann
category of maps and spaces.

\begin{defn}
The Lusternik--Schnirelmann \term{category} of a map  $f \co X\to Y$
is the least integer $k$ for which $X$ has a 
cover by open sets
$$
X= X_0 \cup X_1 \cup \cdots \cup  X_k
$$
such that $f|_{X_i} \simeq *$ for each $i$.
When $f= \id_X$, we write $\cat(X)=\cat(\id_X)$
and when $i \co A\inclds X$, we write $\cat_X(A) = \cat(i)$. 
\end{defn}
If $X$ is a CW complex, then it is equivalent to require
each $X_i$ to be a subcomplex of $X$ in some CW decomposition.
 
The category of $f \co X\to Y$ can also be defined in terms of the 
\textit{Ganea fibrations}  $p_k \co G_k(Y) \to Y$ with fiber $F_k(Y)$.
The inductive definition of these fibrations begins by defining
$$\xymatrix@1{F_0(Y) \ar[r] &  G_0(Y)\ar[r]^-{p_0} & Y}$$
to be the familiar path-loop fibration sequence
$$\xymatrix@1{\om (Y) \ar[r] &  {\mathcal P}(Y)\ar[r]  & Y}.$$ 
Given the $k^{\mathrm{th}}$ Ganea fibration sequence
$$\xymatrix@1{F_k(Y) \ar[r] &  G_k(Y)\ar[r]^-{p_k} & Y},$$
let $\wwbar{G}_{k+1} (Y) = G_k(Y) \cup CF_k(Y)$
be the cofiber of $p_k$
and define $\wbar{p}_{k+1} \co \wwbar{G}_{k+1}(Y)\to Y$ by 
sending the cone to the
base point of $Y$.  The $(k+1)^{\mathrm{st}}$ Ganea fibration 
$p_{k+1} \co G_{k+1}(Y)\to Y$
results from converting the map $\wbar {p}_{k+1}$ to a fibration.
A result of  Ganea \cite{Ganea} 
implies that $\cat(f) \leq k$ if and only if there is a lift
$\lambda$ of $f$ in the diagram
$$
\xymatrix{
&& G_k(Y) \ar[d]^{p_k}
\\
X\ar[rr]^f \ar@{..>}[rru]^\lambda && Y.
}
$$ 
Our third definition is due to G\,W Whitehead.
According to \cite[page 458]{Whitehead},
 $\cat(f) \leq k$ if and only if the composition
 of $f$ with the diagonal map   of pairs
$$
\xymatrix{
  (X,*) \ar[r]^-f & (Y,*)\ar[r]^-{\Delta_{k+1}} & (Y,*)^{k+1}
}
$$
factors, up to homotopy of pairs,  through 
the trivial pair $(X,X)$.

We will make use of a related invariant, called $\Qcat$,
which is defined 
in terms of the fibrations that result from applying a fiberwise
version of the infinite suspension functor $Q$ to the Ganea fibrations.
Let $q_k \co \wwtilde{G}_k(Y) \to Y$
denote  the fiberwise infinite suspension of the $k$--th Ganea
fibration.  Then   $\Qcat(f)$ is the least integer $k$ for which
$f$ lifts through $q_k$ (see Scheerer, Stanley and Tanr\'e \cite{SST}).

\subsection[Rational homotopy and Lusternik-Schnirelmann category]{Rational homotopy and Lusternik--Schnirelmann category}

We briefly recall some key elements of the rational 
theory of Lusternik--Schnirelmann category.  The reader is encouraged to 
consult Cornea, Lupton, Oprea and Tanr\'e \cite[Chapter 5]{CLOT} or 
F\'elix, Halperin and Thomas \cite[Part V]{FHT0} for details.

A (simply-connected) \term{Sullivan algebra} is a 
commutative differential graded
algebra  (CDGA)  $\A$ over $\QQ$ such that: (a) $\A^0 \cong \QQ$ and
$\A^1 = 0$; (b) as a $\QQ$--algebra,  $\A \cong \Lambda(V)$ where $V$
is a graded  vector space; and (c) the differential $d$ 
is decomposable in the sense that 
  $d(\A) \sseq \bar\A^2$, where $\bar \A$ is 
the augmentation ideal of $\A$.
Every simply-connected space $X$ has a \term{Sullivan minimal model}, 
$\M(X)$, which is a Sullivan algebra such that 
$H^*(\M(X))\cong H^*(X;\QQ)$.  A \term{model} for $X$ 
is any CDGA  for which there is a map $\phi \co \M(X) \to \A$
which induces an isomorphism in cohomology 
($\phi$  is a \term{quasi-isomorphism}). 

\begin{defn}
Let $\A$ be an augmented CDGA and write $\bar \A$ for 
the augmentation ideal.  The \term{nilpotency} of $A$,  
denoted $\nil(\A)$, is the
greatest integer $k$ such that $(\bar \A)^k\neq 0$.
\end{defn}

The algebraic study of the Lusternik--Schnirelmann category of rational
spaces can be developed from the following result,
which can be found in \cite[Corollary 5.16]{CLOT}
(though, historically, it was not \cite[Remark 5.15]{CLOT}).

\begin{thm}\label{thm:Qcat}
If $X$ is a rational space, then the following are equivalent
\begin{problist}
\item  
$\cat(X) \leq k$, and 
\item
$\M(X)$ is a retract (up to chain homotopy) of a Sullivan algebra  $\B$ which is 
quasi-isomorphic with another  CDGA  $\A$ with $\nil(\A)\leq k$.
\end{problist}
\end{thm}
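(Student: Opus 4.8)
The plan is to read this off the Ganea--fibration characterization of category recorded above --- $\cat(X)\le k$ if and only if $\id_X$ lifts through $p_k\co G_k(X)\to X$ --- by passing through the standard dictionary between simply-connected rational spaces and minimal Sullivan models. The one genuinely non-formal ingredient is the rational Lusternik--Schnirelmann theorem of F\'elix--Halperin; everything else is manipulation of Sullivan models, quasi-isomorphisms, and homotopy (co)limits.

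For the implication $(a)\Rightarrow(b)$, suppose $\cat(X)\le k$ and choose a lift $\lambda\co X\to G_k(X)$ of $\id_X$ through $p_k$. Apply the commutative cochain functor $A_{PL}$ and factor $A_{PL}(p_k)$ through a relative Sullivan algebra: there is a Sullivan algebra $\B$ with $\M(X)\inclds\B$ and a quasi-isomorphism $\B\to A_{PL}(G_k(X))$. Lifting $A_{PL}(\lambda)$ across the minimal model of $X$ yields a CDGA map $r\co\B\to\M(X)$ retracting the inclusion up to chain homotopy, so $\M(X)$ is a homotopy retract of $\B$. It remains to exhibit $\A$ quasi-isomorphic to $\B$ with $\nil(\A)\le k$. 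Here I would use that $G_k(X)$ is obtained from the contractible space $G_0(X)=\mathcal P(X)$ by $k$ iterations of ``take the cofibre, then convert to a fibration'', so it carries a cone-decomposition of length $\le k$; dualizing this decomposition --- each cofibre square becomes a homotopy pushout of CDGAs, one leg of which is acyclic --- produces, by induction on $k$, a CDGA model $\A$ for $G_k(X)$ whose augmentation ideal satisfies $(\bar\A)^{k+1}=0$. Any two models of a space are quasi-isomorphic, so $\B\simeq\A$.

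For $(b)\Rightarrow(a)$, assume $\M(X)=(\Lambda V,d)$ is a homotopy retract of a Sullivan algebra $\B$ that is quasi-isomorphic to a CDGA $\A$ with $\nil(\A)\le k$. The elementary point is that any CDGA map $f\co\M(X)\to\A$ sends the ideal $\Lambda^{>k}V$ generated by products of more than $k$ elements of $V$ into $(\bar\A)^{k+1}=0$ --- because $X$ is simply-connected, so $V$ lies in the augmentation ideal --- hence $f$ factors through the projection $\rho_k\co\M(X)\to(\Lambda V/\Lambda^{>k}V,\bar d)$ (the differential descends since $d$ is decomposable). Feeding the homotopy retraction of $\M(X)$ off $\B$ into this factorization shows that $\rho_k$ itself admits a homotopy retraction, and the F\'elix--Halperin theorem identifies this condition with $\cat(X_\QQ)\le k$, ie, $\cat(X)\le k$ since $X$ is already rational. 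Symmetrically to $(a)\Rightarrow(b)$, one can instead realize $\B$ as a model of a rational space $Z$ mapping to $X=X_\QQ$, use the nilpotency bound to factor that map through $p_k$ up to homotopy, and then use the retraction to lift $\id_X$ through $p_k$.

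The step I expect to be the main obstacle is exactly the passage ``$\nil(\A)\le k\Rightarrow$ the Ganea lift exists''. The nilpotency of a model is a priori only a \emph{lower} bound for category --- it is a cup-length/Toomer-type invariant, and integrally no such argument is available at all --- so the real content is that rationally this bound is attained once one is allowed to replace $\A$ by a Sullivan resolution $\B$ and to retract $\M(X)$ off it only up to homotopy. That sharpness is precisely the F\'elix--Halperin theorem on rational Lusternik--Schnirelmann category (see \cite[Part V]{FHT0}); granting it, the rest is bookkeeping, and since the statement in the form used here is recorded as \cite[Corollary 5.16]{CLOT}, in the paper we simply cite it, the outline above being the argument underlying that reference.
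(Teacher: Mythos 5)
The paper offers no proof of this statement: it is quoted directly from \cite[Corollary 5.16]{CLOT}, with only the parenthetical historical caveat that it ``was not'' originally a remark. So your closing move --- cite that corollary --- is exactly what the paper does, and there is no in-paper argument to measure your sketch against. Two comments on the sketch itself, since you supplied one. First, in $(a)\Rightarrow(b)$ the detour through the cone decomposition of $G_k(X)$ and its dualization is unnecessary: writing $\M(X)=(\Lambda V,d)$, the quotient $\A=(\Lambda V/\Lambda^{>k}V,\bar d)$ already satisfies $\nil(\A)\leq k$ by construction, and the F\'elix--Halperin theorem supplies a retraction of its relative Sullivan model $\B$ onto $\M(X)$ whenever $\cat(X)\leq k$; this is the route the cited corollary takes, and it avoids having to justify that dualizing a cofibre square of spaces yields a nilpotency-controlled pushout of CDGAs. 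Second, in $(b)\Rightarrow(a)$ the phrase ``feeding the homotopy retraction into this factorization'' conceals the two real steps: one must invert the quasi-isomorphism $\B\to\A$ in the homotopy category of CDGAs to obtain a homotopy-category retraction of the projection $\rho_k\co\M(X)\to(\Lambda V/\Lambda^{>k}V,\bar d)$, and then rigidify this to a strict retraction of the relative Sullivan model of $\rho_k$ via the homotopy extension property for relative Sullivan inclusions, which is the form in which F\'elix--Halperin applies. This works only if ``retract up to chain homotopy'' is read multiplicatively, ie, as a homotopy of CDGA maps; if the retraction were merely one of cochain complexes or $\M(X)$--modules, the hypothesis would be of $\Mcat$ type and the implication would additionally require Hess's theorem $\Mcat(X)=\cat(X)$, which you do not invoke. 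Your identification of the sharpness of the nilpotency bound as the genuinely non-formal ingredient is correct.
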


It follows immediately from \fullref{thm:Qcat}   that
if $u\in H^*(Y) = H^*(\M(Y))$ can be represented by a 
cocycle which is a $k$--fold product, then $f^*(u) = 0$
for any map $f \co X\to Y$ with $\cat(X) < k$.   In this case,
the \term{(rational) category weight} of $u$ is at least $k$.
We write $\wgt(u) \geq k$ and observe that $\cat(X) \geq \wgt(u)$
whenever $u\neq 0\in H^*(X)$.
 The maximum value of $\wgt(u)$ for $u\in H^*(X)$ is 
known as the \term{Toomer invariant} of $X$, and is denoted $e_0(X)$.

There is a related invariant, denoted $\Mcat$  (see F\'elix \cite{Felix}).  It is known
 that   $\Mcat(X) = \cat(X)$ for simply-connected rational spaces
(see Hess \cite[Theorem 0]{Hess}).
The equality of $\Mcat$ and $\cat$ is known to fail for 
maps:  according to Parent 
\cite[Theorems 2 and 11]{Parent} 
$\Mcat(f\cross g) = \Mcat(f) + \Mcat(g)$;
on  the other hand, Stanley \cite{Stanley} has produced examples 
of maps $f$ and $g$ between simply-connected rational spaces 
such that $\cat(f\cross g) < \cat(f) + \cat(g)$.
It is also known that $\Mcat(X) 
= \Qcat(X)$ when $X$ is a simply-connected rational space (see Scheerer
and Stelzer \cite{SS}, but see also \cite[Theorem 5.49]{CLOT}).
A simple adaptation of the proof of \cite[Theorem 5.49]{CLOT}
yields the following generalization to maps; we omit the proof.  

\begin{prop}\label{prop:Q=M}
If $f \co X\to Y$  is a map between simply-connected 
rational spaces,  then  $\Qcat(f) = \Mcat(f)$.
\end{prop}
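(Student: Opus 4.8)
The plan is to transcribe the proof of \cite[Theorem 5.49]{CLOT}, which establishes $\Mcat(X)=\Qcat(X)$ for a simply-connected rational space $X$, replacing the identity map $\id_X$ by an arbitrary map $f\co X\to Y$ throughout. Fix a relative Sullivan model $(\Lambda V,d)\inclds(\Lambda V\otimes\Lambda U,D)$ of $f$, where $(\Lambda V,d)=\M(Y)$ and $(\Lambda V\otimes\Lambda U,D)$ is quasi-isomorphic to $\M(X)$; write $\varphi\co(\Lambda V,d)\to(\Lambda V\otimes\Lambda U,D)$ for the inclusion. For each $k$ fix a relative Sullivan model $(\Lambda V,d)\inclds(\Lambda V\otimes\Lambda Z_k,D_k)$ of the $k$--th Ganea fibration $p_k\co G_k(Y)\to Y$. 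Then $\cat(f)\le k$ if and only if $\varphi$ extends, along this inclusion, to a map of \emph{algebras} $(\Lambda V\otimes\Lambda Z_k,D_k)\to(\Lambda V\otimes\Lambda U,D)$; the module version of this criterion --- namely that $\Mcat(f)\le k$ if and only if such an extension exists merely as a map of $(\Lambda V,d)$--\emph{modules} --- is the module-theoretic characterization of $\Mcat$ for maps (see F\'elix \cite{Felix} and Parent \cite{Parent}); and $\Qcat(f)\le k$ if and only if $f$ lifts through the fiberwise infinite suspension $q_k\co\wwtilde{G}_k(Y)\to Y$ of $p_k$, as recalled above. We prove the two inequalities $\Mcat(f)\le\Qcat(f)$ and $\Qcat(f)\le\Mcat(f)$ in turn.

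For $\Mcat(f)\le\Qcat(f)$, suppose $\Qcat(f)\le k$ and choose a lift $\lambda\co X\to\wwtilde{G}_k(Y)$ of $f$ through $q_k$. The fiber of $q_k$ is $\Omega^\infty\Sigma^\infty F_k(Y)$, which rationally is a product of Eilenberg--Mac Lane spaces; consequently the rational model of $q_k$ has the form $(\Lambda V,d)\inclds(\Lambda V\otimes\Lambda W,\delta)$ with $\delta$ \emph{linear} on $W$, in the sense that $\delta(W)$ has no component of word-length $\ge 2$ in $W$. Thus $(\Lambda V\otimes\Lambda W,\delta)$ is, as a $(\Lambda V,d)$--module, freely generated by the (suspended) fiber --- the module-theoretic analogue of the Ganea model. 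Applying the functor $\M$ to $\lambda$ and composing with this identification yields a $(\Lambda V,d)$--module extension of $\varphi$ over the Ganea data, which is exactly the witness required for $\Mcat(f)\le k$. This direction uses only the standard properties of relative Sullivan models together with the linearity of $\delta$, and is essentially formal.

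The reverse inequality $\Qcat(f)\le\Mcat(f)$ is the substantive step. Suppose the $(\Lambda V,d)$--module extension witnessing $\Mcat(f)\le k$ exists; one must realize it topologically as an honest lift of $f$ through $q_k$. The key point is that $q_k$, having a rationally linear fiber, behaves like a principal fibration: the obstructions to lifting a map into $Y$ through $q_k$ lie in cohomology of the source with local coefficients determined by the suspended fiber, with no secondary obstructions. Pulling $q_k$ back along $f$ and running this obstruction theory over $X$, one checks that the $(\Lambda V,d)$--module extension supplied by $\Mcat(f)\le k$ annihilates precisely these obstruction classes --- informally, applying $\Omega^\infty\Sigma^\infty$ fiberwise has trivialized exactly the higher cup-product and Whitehead-product phenomena that separate $\cat$ from $\Mcat$ --- so $f$ lifts through $q_k$ and $\Qcat(f)\le k$. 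I expect this verification to be the main obstacle: it requires pinning down the rational model of the fiberwise infinite suspension $q_k$ (equivalently, of its pullback over $X$) and matching the lifting obstructions against the module extension problem over $(\Lambda V,d)$. Apart from working relative to the map $f$ rather than over a point, this is exactly the argument of \cite[Theorem 5.49]{CLOT}; combining the two inequalities yields $\Qcat(f)=\Mcat(f)$, and the special case $f=\id_X$ recovers the known equality for spaces.
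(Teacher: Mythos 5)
The paper gives no proof of this proposition at all---it merely asserts that ``a simple adaptation of the proof of \cite[Theorem 5.49]{CLOT} yields the following generalization to maps; we omit the proof''---and your proposal is precisely such an adaptation (relative Sullivan model of $f$, linear model of the fiberwise infinite suspension of the Ganea fibration, the two inequalities in turn), so the approaches coincide. The one caveat is that you phrase the hard direction $\Qcat(f)\le\Mcat(f)$ as a topological obstruction-theory argument and leave it partly as an expectation, whereas the argument of \cite[Theorem 5.49]{CLOT} that the authors have in mind runs purely algebraically, by identifying $(\Lambda V,d)$--module retractions with DGA sections of the fiberwise-linearized Ganea model; this is a difference of exposition rather than of substance.
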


\section{Categorical sequences}

In this section we will define our object of study,
the categorical sequence associated to a space $X$.
To ensure that our sequences are well-defined, we
must first prove some   results concerning
the relative category of an $n$--skeleton.

\subsection{Relative category of skeleta}

Since we usually think of an $n$--skeleton as a subspace of $X$,
we will sometimes write $\cat_X(X_n)$ instead of $\cat(i)$ when  
$i \co X_n\to X$ is an $n$--skeleton.

\begin{prop}\label{prop:invariant}
For fixed $n$, the integer $\cat_X(X_n)$ depends only on the 
homotopy type of $X$, and not on the choice of $n$--skeleton. 
\end{prop}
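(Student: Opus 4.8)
The plan is to show that any two $n$--skeleta of $X$ give the same value of $\cat_X(-)$ by comparing each of them directly to a ``reference'' $n$--skeleton, namely the $n^{\mathrm{th}}$ CW skeleton of a fixed CW replacement of $X$. So suppose $i\co X_n\to X$ and $i'\co X_n'\to X$ are two $n$--skeleta; I want $\cat(i)=\cat(i')$. By symmetry it suffices to prove $\cat(i')\leq\cat(i)$. The key point is that an $n$--skeleton is, essentially by definition, an $n$--equivalence out of an $n$--dimensional complex, and this is exactly the situation in which obstruction theory lets one compare maps. Concretely: since $i$ is an $n$--equivalence and $X_n'$ is $n$--dimensional (up to homotopy), the map $i'\co X_n'\to X$ lifts through $i$ up to homotopy, i.e.\ there is $g\co X_n'\to X_n$ with $i\of g\simeq i'$. (For the lift one replaces $i$ by a fibration and runs the standard obstruction-theoretic argument; the obstructions to lifting a map from an $n$--complex through an $n$--equivalence vanish.)

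Given such a $g$, the inequality $\cat(i')\leq\cat(i)$ is immediate from the open-cover definition: if $X_n=U_0\cup\cdots\cup U_k$ with each $i|_{U_j}\simeq *$, then pulling back along $g$ (and using that $g$ is, up to homotopy, an inclusion of a subcomplex after a CW approximation, or simply that $g^{-1}(U_j)$ is an open cover of $X_n'$ on which $i'|_{g^{-1}(U_j)}\simeq (i\of g)|_{g^{-1}(U_j)}\simeq i|_{U_j}\of g\simeq *$) exhibits $\cat(i')\leq k$. Since $k=\cat(i)$ was arbitrary, $\cat(i')\leq\cat(i)$, and by the symmetric argument equality holds. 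Finally, for the homotopy-invariance-of-$X$ part: if $h\co X\xrightarrow{\simeq} X'$ is a homotopy equivalence and $i\co X_n\to X$ is an $n$--skeleton, then $h\of i\co X_n\to X'$ is an $n$--skeleton of $X'$, and $\cat(h\of i)=\cat(i)$ because precomposition (or postcomposition) with a homotopy equivalence does not change the LS category of a map --- a trivial restriction $f|_U\simeq *$ stays trivial after composing with any map, and a homotopy equivalence has a two-sided homotopy inverse, giving the reverse inequality.

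I expect the one genuine technical point to be the existence of the comparison map $g$ with $i\of g\simeq i'$: this is where one uses that $X_n'$ is $n$--dimensional \emph{up to homotopy} and that $i$ is an $n$--equivalence, and one should be slightly careful, replacing $X_n'$ by a genuinely $n$--dimensional CW complex and $i$ by a fibration before invoking the obstruction-theoretic lifting criterion (the obstruction groups are $H^{j+1}(X_n';\pi_j(\mathrm{fib}\,i))$, which vanish for $j\leq n-1$ since $i$ is an $n$--equivalence, and there is nothing in dimension $>n$). Everything else is a formal manipulation of open covers. One should also note in passing that this argument works verbatim in the rational setting, replacing ``$n$--skeleton'' by ``rational $n$--skeleton'' throughout, since a rational $n$--skeleton is again an appropriate equivalence out of a space that is $n$--dimensional rationally, and the same obstruction theory (with rational coefficients) applies.
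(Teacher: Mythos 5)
Your proof is correct and takes essentially the same route as the paper's: both arguments produce a comparison map $g$ between the two skeleta by lifting through an $n$--equivalence out of an $n$--dimensional complex (the paper cites Switzer, Theorem 6.31, for this), and then deduce $\cat(i')\leq \cat(i)$ from the composition inequality for the category of a map (which the paper quotes from Berstein--Ganea rather than re-deriving from open covers as you do). There is no gap.
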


\begin{proof}
Let $i \co A\to X$ and $j \co B\to X$ be two $n$--skeleta of $X$
and consider the diagram 
$$
\xymatrix{
&& B\ar[d]^j
\\
A \ar[rr]_i \ar@{..>}[rru]^l && X .
}
$$ 
Since $j$ is an $n$--equivalence and $A$ is $n$--dimensional,
there is a lift $l \co A\to B$ such that $j\of l \simeq i$ 
(see Switzer \cite[Theorem 6.31]{Switzer}).  It follows 
that $\cat_X(A) = \cat(i) \leq\cat(j)$ (see Berstein and Ganea
\cite[1.4]{B-G}).  
Since the situation is symmetrical, we also have $\cat(j) \leq \cat(i)$.
\end{proof}

It can be conceptually easier to work with the Lusternik--Schnirelmann 
category of spaces
rather than   of maps.  Happily,
 there is no difference between the two for 
skeleta.

\begin{prop}\label{prop:WellDefined}
If $X$ is $(c-1)$--connected and $i \co X_n \to X$ is an $n$--skeleton
with $n\geq c$, then
\begin{problist}
\item
$\cat  (X_n) = \cat(i)$,  
\item
$\Qcat(X_n) = \Qcat(i)$, and 
\item
if $X$ is a rational space and $i \co X_n\to X$ is a rational $n$--skeleton
for $X$,  then $\cat(X_n) = \Mcat(i)$.
\end{problist}
\end{prop}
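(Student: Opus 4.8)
The plan is to prove all three parts by a single device: the relative category $\cat(i)$ is always at most the absolute category $\cat(X_n)$ (and likewise for $\Qcat$), while the reverse inequality holds because an $n$--skeleton is low--dimensional enough to lift back through the comparison of the Ganea(--type) fibrations of $X_n$ and $X$. The easy inequalities need nothing about $n\geq c$: if $X_n = U_0 \cup \cdots \cup U_k$ with each $U_j \inclds X_n$ nullhomotopic, then $i|_{U_j} = i\of(U_j\inclds X_n)$ is nullhomotopic as well, so $\cat(i)\leq\cat(X_n)$; the same bookkeeping with lifts through $q_k$ gives $\Qcat(i)\leq\Qcat(X_n)$.

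For the reverse inequality in (a), assume $\cat(i)\leq k$ and use Ganea's criterion to obtain a lift $\lambda$ of $i$ through $p_k\co G_k(X)\to X$. Pulling $p_k$ back along $i$ and combining $\lambda$ with $\id_{X_n}$ produces a section $\sigma\co X_n\to G_k(X)\cross_X X_n$ of the pulled--back fibration. There is a canonical map over $X_n$ from $G_k(X_n)$ to $G_k(X)\cross_X X_n$ whose effect on fibers is the induced map $F_k(X_n)\to F_k(X)$, and here I would invoke the identification $F_k(Y)\simeq(\om Y)^{*(k+1)}$: since $\om i\co \om X_n\to\om X$ is an $(n{-}1)$--equivalence between $(c{-}2)$--connected spaces, iterating the standard connectivity estimate for joins shows its $(k{+}1)$--fold join is an $(n+kc-1)$--equivalence, hence (as $c\geq 1$) an $n$--equivalence once $k\geq 1$. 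Then $G_k(X_n)\to G_k(X)\cross_X X_n$ is an $n$--equivalence, and because $X_n$ is $n$--dimensional the section $\sigma$ lifts through it; the resulting map lifts $\id_{X_n}$ through $p_k$, so $\cat(X_n)\leq k$. The case $k=0$ is handled directly: if $\cat(i)=0$ then $i\simeq *$, and a nullhomotopic $n$--equivalence forces $X$ to be $n$--connected, so that $X_n$ --- an $n$--dimensional simply--connected complex with trivial reduced homology through dimension $n$ --- is contractible and $\cat(X_n)=0$.

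Part (b) is the same argument applied to the fiberwise--$Q$ Ganea fibrations $q_k$, whose fibers are $Q(F_k(Y))\simeq Q\bigl((\om Y)^{*(k+1)}\bigr)$; since $Q$ does not lower the connectivity of a map between connected spaces, the comparison of fibers is still an $(n+kc-1)$--equivalence and the lift of $\id_{X_n}$ through $q_k$ is built exactly as before (the case $k=0$ again treated directly). For part (c) I would assemble known identities: a rational $n$--skeleton of a simply--connected rational space is again a simply--connected rational space, so $\cat(X_n)=\Mcat(X_n)$ by Hess and $\Mcat(X_n)=\Qcat(X_n)$ by Scheerer--Stelzer; part (b), applied in its rational form, gives $\Qcat(X_n)=\Qcat(i)$; and $\Qcat(i)=\Mcat(i)$ by \fullref{prop:Q=M}. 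Concatenating these yields $\cat(X_n)=\Mcat(i)$.

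The step I expect to be the main obstacle is the connectivity estimate for $F_k(i)$. A bare five--lemma comparison of the two fibration sequences yields only an $(n{-}1)$--equivalence, which is exactly one dimension short of what is needed to lift a map out of an $n$--complex; it is the join structure $F_k(Y)\simeq(\om Y)^{*(k+1)}$ that boosts the estimate to an $(n+kc-1)$--equivalence and makes the argument succeed for every $k\geq 1$. A secondary point of care arises in the rational setting, where a rational $n$--skeleton is only $(n{+}1)$--dimensional: there one uses $c\geq 2$ (simple connectivity) to keep $n+kc-1\geq n+1$ for $k\geq 1$.
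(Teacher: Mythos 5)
Your argument is correct and is essentially the paper's own proof: both establish $\cat(X_n)\leq\cat(i)$ by pulling the Ganea fibration of $X$ back along $i$, noting that $F_k(X_n)\to F_k(X)$ is an $(n+kc-1)$--equivalence for $k\geq 1$ (the paper cites \cite[Lemma 6.26]{CLOT} where you derive it from the join model $F_k(Y)\simeq (\om Y)^{*(k+1)}$), and lifting the section through the resulting $n$--equivalence because $\dim X_n\leq n$, with (b) obtained by applying $Q$ fiberwise and (c) by the identical chain $\cat=\Mcat=\Qcat$ via Hess, Scheerer--Stelzer and \fullref{prop:Q=M}. The only divergence is at $k=0$, where your direct argument has a small slip --- $X$ being $n$--connected makes $X_n$ only $(n{-}1)$--connected, and $H_n(X_n;\ZZ)$ need not vanish (consider $S^n\to *$) --- but this case is vacuous under the hypotheses, since $i_*$ is surjective onto $\pi_c(X)\neq 0$ and hence $\cat(i)\geq 1$, which is how the paper dispenses with it.
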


\begin{proof}
We begin by proving (a).
It is trivial that $\cat_X(X_n) \leq \cat(X_n)$;  we wish to 
prove the reverse inequality.  Assume that $\cat_X(X_n) = k$;
we will show that $\cat(X_n) \leq k$.  
Since $n\geq c$, the map $i_* \co \pi_c(X_n) \to \pi_c(X)$
is nontrivial, and hence $k \geq 1$.
Now consider the diagram
$$
\xymatrix{
F_k(X_n) \ar[rr]^l\ar[d] && F_k(X) \ar@{=}[rr]\ar[d] && F_k(X) \ar[d]
\\
G_k(X_n) \ar[d]\ar[rr]^-j && P \ar[d] \ar[rr] && G_k(X) \ar[d]
\\
X_n \ar@{=}[rr] && X_n \ar[rru]^\lambda\ar[rr]^i \ar@{..>}@/^/[u]^\tau
\ar@{..>}@/_/[llu]_(.4){\sigma}&& X 
}
$$
in which the bottom right square is a pullback.
Since $\cat_X(X_n) = k$
there is a lift  $\lambda$ of $i$.   By the pullback property,
there is a section $\tau \co X_n \to P$.

 According to  
\cite[Lemma 6.26]{CLOT}, the map 
$l \co F_k(X_n) \to F_k(X)$  is an $(n{+}kc{-}1)$--equivalence
since $k\geq 1$,
and it follows that $j$ is also a $(n{+}kc{-}1)$--equivalence.
Since $n\leq n+ kc-1$, it follows that there is a (unique)
map $\sigma \co X_n\to G_k(X_n)$
with $j\of \sigma = \tau$ \cite[Theorem 6.31]{Switzer}.   
This $\sigma$ is a   section (up to homotopy)
of the fibration $G_k(X_n) \to X_n$,  and so $\cat(X_n)\leq k$.

The key to the proof of part (a) is the fact that 
$l \co F_k(X_n) \to F_k(X)$  is an $(n{+}kc{-}1)$--equivalence.
But this implies that  $Ql \co QF_k(X_n) \to QF_k(X)$  
is also an $(n{+}kc{-}1)$--equivalence, and so the proof of (a)
can be used again to show $\Qcat(i) = \Qcat(X_n)$.

It remains to prove (c).   For this we simply compute 
$$
\begin{array}{rclcl}
\cat(X_n) &=& \Mcat(X_n) 
&\quad & \text{by Hess \cite[Theorem 0]{Hess}}
\\
&=&  \Qcat(X_n) 
&\quad & \text{by Scheerer and Stelzer \cite{SS}}
\\
&=& \Qcat(i) 
&\quad & \text{by part (b)}
\\
&=& \Mcat(i)
&\quad & \text{by \fullref{prop:Q=M}}.
\end{array}
$$ 
This completes the proof.
\end{proof}

\begin{rem}{\rm
The proof of \fullref{prop:WellDefined}(a)
is an adaptation of  the proof 
of \cite[Theorem 1]{FHT1}.   The argument actually works
 equally well with $i \co X_n\to X$ replaced by 
any $n$--equivalence $f \co Z\to X$ with $\dim(Z) \leq n+ kc-1$.
The conclusion in this case is that $\cat(f) = \cat(Z) = k$.
}
\end{rem}

\subsection{Sequences from topology and algebra} 

We will be concerned with  sequences whose values are
either nonnegative integers or $\infty$;  thus a sequence
is a function $\sigma \co \NN\to \NN\cup \{\infty\}$.  We say
that $\sigma \leq \tau$ if $\sigma(k) \leq \tau(k)$ for each 
$k\geq 0$.  We write $\sigma <\tau$ if $\sigma \leq \tau$
and $\sigma \not  = \tau$ ($\sigma <\tau$ does \textit{not}
mean that $\sigma(k) < \tau(k)$ for every $k$).
If $\sigma$ is increasing, then the \term{length}
of $\sigma$ is $\sup\{ k\, |\, \sigma(k) < \infty\}$.

In view of Propositions \ref{prop:invariant} and \ref{prop:WellDefined}, we 
may make the following definition.

\begin{defn}
The \term{categorical sequence} of a CW complex $X$ is the 
sequence  $\sigma_X \co \NN \to \NN\cup\{\infty\}$ defined by 
$$
\sigma_X (k) = \inf\{ n \, |\,  \cat_X(X_n) \geq k\}.
$$ 
\end{defn}

\begin{rem}\label{rem:increasing} {\rm
The following elementary observations about categorical sequen\-ces
will be used frequently in what follows. 
\begin{problist}
\item
$\sigma_X$
is an  invariant of the weak homotopy type of $X$.
\item
If $X$ is $(c-1)$--connected but not $c$--connected,
then $\sigma_X(0) = 0$ and  $\sigma_X(1) = c$. 
\item
The finite values of  $\sigma_X$ are strictly increasing.
\item
If $\sigma_X(k) = n$,  then  $X_n \neq X_{n-1}$ in every cellular
decomposition of  $X$.  In particular,  if $X$ is simply-connected
and $\sigma_X(k) = n$,
then $H^n(X) \neq 0$ for some coefficients (see \fullref{thm:subadditive}(b)
below).
\item
If $X$ is  finite-dimensional, then $\cat(X) = \mathrm{length}(\sigma_X)$;
if $X$ is infinite-dimen\-sion\-al, then
$\mathrm{length}(\sigma_X)\leq 
\cat(X) \leq 2\cdot \mathrm{length}(\sigma_X )$ (see Hardie
\cite{Hardie2}).
\item
In particular, $\cat(X) = \infty$ if and only if $\mathrm{length}(\sigma_X) = \infty$. 
\item
If $\sigma_X \leq \sigma_Y$ and $Y$ is 
finite-dimensional, then $\cat(X) \geq \cat(Y)$.
\end{problist}
}
\end{rem}

Before proceeding further, we  give some examples.

\begin{authex}\label{ex:1}
(a)\qua As is well-known, the integral cohomology of 
the symplectic group $Sp(2)$
is $H^*(Sp(2)) = \Lambda(x_3,x_7)$, an exterior 
algebra on generators in dimensions $3$ and $7$.
It follows from  \fullref{thm:subadditive}(b)
that the only possible finite values for $\sigma_X(k)$
are $0,3,7$ and $10$.   Since it is known (see Schweitzer \cite[Example
4.4]{Schweitzer})
that $\cat(Sp(2)) = 3$, $\sigma_{Sp(2)}(3) <\infty$,   and hence
$$
\sigma_{Sp(2)} = (0,3,7,10,\infty,\infty, \ldots).
$$
(b)\qua Define a Sullivan algebra  $\M = \Lambda( x_3, y_3, z_5)$
with $d(z_5) = x_3 y_3$, and let $X$ be a rational space 
whose minimal model is isomorphic to $\M$ (this algebra and 
space appear in \cite[page 387]{FHT0}).  The 
nontrivial cohomology of $X$ is   
$$
\begin{array}{rcrcl}
H^3(X) &=&\QQ [x] &\oplus& \QQ [y]
\\
H^8(X) &=& \QQ [xz] & \oplus& \QQ[yz]
\\
H^{11}(X) &=& \QQ [xyz]  
\end{array}
$$
where brackets indicate cohomology classes.
Thus $\cat(X) \leq 3$, and 
since  $\cat(X) \geq \wgt([xyz]) = 3$, we have
$\cat(X) = 3$.  This forces 
$\sigma_X = (0,3,8,11,\infty,\infty,\ldots)$.

(c)\qua The  `finite-dimensional'  hypothesis 
in \fullref{rem:increasing}(g) cannot be removed.  Roitberg has 
shown that the cofibers $C$ of  
certain (phantom) maps $f \co \s K(\ZZ,5) \to S^4$ have the property that
$C_n$ is a suspension for all $n$, but $\cat(C) = 2$.
Thus  $\sigma_C = (0,4,\infty,\infty, \ldots) = \sigma_{S^4}$, but 
$
\cat(C) = 2 > 1 = \cat(S^4)
$.
\end{authex}

We will often abbreviate a sequence by deleting any terms
known to be  infinite.  Thus, for example, we could summarize the 
results of \fullref{ex:1}(a,b)   by writing
$\sigma_{Sp(2)} = (0,3,7,10)$ and $\sigma_X = (0,3,8,11)$.
If we were unsure of the later values of the sequence, 
we would write, for example, $\sigma_{Sp(2)} = (0,3,7,\ldots)$;
knowing that $\cat(Sp(2))\leq 3$, we might write
$\sigma_{Sp(2)} = (0,3,7,a)$, where $a = 10$ or $a = \infty$.
 
We will also make use of the algebraic 
\term{product length sequence} of a nonnegatively 
graded augmented CGA $\A$, defined  by
$$
\sigma_\A(k) = \inf\{ n \, |\,  \exists \ \mathrm{nontrivial}\ 
k\mathrm{-fold \  products\ in}\  \A^{n}\}.
$$
If each of  $P$ and $Q$ is  either a space or a graded algebra,
then it may happen that $\sigma_P = \sigma_Q$.  
If so, then we say that $P$ and $Q$
are \term{isosequential}.   

\begin{authex} (a)\qua
The spaces 
$$\overbrace{S^2 \cross \cdots \cross S^2}^{n\
\mathrm{factors}}\quad\text{and}\quad \cp^n$$
are easily seen to be isosequential.

(b)\qua It is easy to verify that  $\sigma_{\cp^\infty} = (0,2,4,6,\ldots)$;
it is even easier to check that if  $\A =H^*(\cp^\infty;\QQ)$, 
$\sigma_\A =   (0,2,4,6,\ldots)$.  Thus the space
$\cp^\infty$ and the graded algebra $H^*(\cp^\infty;\QQ)$
are isosequential.  

(c)\qua Let $X$ be the space of \fullref{ex:1}(b).  Then
$\sigma_X = (0,3,8,11)$, but $\sigma_{H^*(X)} = (0,3,11)$,
and $\sigma_{\M(X)} = (0,3,6,11)$, so $X$ is not isosequential 
with either $H^*(X)$ or $\M(X)$.  Instead, these sequences are
related by the string of strict inequalities
$\sigma_{\M(X)} < \sigma_X <   \sigma_{H^*(X)}$.
\end{authex}

\section{Inequalities between sequences}

One of our goals   is to develop techniques for computing 
categorical sequences $\sigma_X$.
As with formulas for the calculation of $\cat(X)$,  many of our results 
for sequences  
come in the form of inequalities.

\subsection{Inequalities for general spaces}

We begin by dispensing with wedges and retracts.

\begin{prop}\label{prop:wedgeretract}
Let $X$ and $Y$ be any two spaces.  Then
\begin{problist}  
\item
$\sigma_{X\wdg Y} (k) = \min\{ \sigma_X(k), \sigma_Y(k)\} $, and
\item
if $X$ is a homotopy retract of $Y$, then  $\sigma_X \geq \sigma_Y$.
\end{problist}
\end{prop}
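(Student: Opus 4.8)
The plan is to reduce both statements to the behavior of $\cat$ of maps under skeleta, using the characterization $\sigma_X(k) = \inf\{n \mid \cat_X(X_n) \geq k\}$ together with the fact (from Propositions \ref{prop:invariant} and \ref{prop:WellDefined}) that $\cat_X(X_n)$ is well-defined. The key observation for (a) is that if $X_n$ is an $n$-skeleton for $X$ and $Y_n$ is an $n$-skeleton for $Y$, then $X_n \wdg Y_n$ is an $n$-skeleton for $X \wdg Y$: it is $n$-dimensional, and the inclusion $X_n \wdg Y_n \to X \wdg Y$ is an $n$-equivalence because $n$-equivalences are closed under wedge (one can check this on homotopy groups through dimension $n$, or cite that a wedge of $n$-equivalences is an $n$-equivalence). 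So I would first record this, then compute $\cat_{X\wdg Y}(X_n \wdg Y_n)$.

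For the computation of category of a wedge inclusion: the inclusion $X \wdg Y \to X \cross Y$ followed by the two projections shows that $\cat_{X \wdg Y}(X_n \wdg Y_n) = \max\{\cat_X(X_n), \cat_Y(Y_n)\}$. Indeed, the wedge $X_n \wdg Y_n$ maps to $X$ by collapsing $Y_n$, and the composite $X_n \wdg Y_n \to X \wdg Y \to X$ restricts to $X_n \hookrightarrow X$ on the first summand and is trivial on the second, so a categorical cover of $X_n \hookrightarrow X$ of size $\cat_X(X_n)$ and one of $Y_n \hookrightarrow Y$ can be combined; conversely restricting a categorical cover of the wedge inclusion to each summand gives covers for each factor. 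This yields $\cat_{X\wdg Y}(X_n\wdg Y_n) = \max\{\cat_X(X_n),\cat_Y(Y_n)\}$. Now
$$
\sigma_{X\wdg Y}(k) = \inf\{n \mid \max\{\cat_X(X_n),\cat_Y(Y_n)\} \geq k\}
= \inf\{n \mid \cat_X(X_n)\geq k \text{ or } \cat_Y(Y_n)\geq k\},
$$
which is exactly $\min\{\sigma_X(k),\sigma_Y(k)\}$.

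Part (b) is quicker. If $X$ is a homotopy retract of $Y$, say $X \xrightarrow{s} Y \xrightarrow{r} X$ with $r \of s \simeq \id_X$, then an $n$-skeleton $Y_n \to Y$ composes with $r$ to give a map $Y_n \to X$; after converting to a CW map and noting it induces the relevant homology/homotopy behavior, one obtains that $X$ admits an $n$-skeleton $X_n$ together with a factorization (up to homotopy) of $X_n \hookrightarrow X$ through $Y_n \hookrightarrow Y$ via the retraction data — more precisely, one shows $\cat_X(X_n) \leq \cat_Y(Y_n)$ by pulling back a categorical cover of $Y_n \hookrightarrow Y$ along an appropriate lift $X_n \to Y_n$, exactly as in the proof of \fullref{prop:invariant} using \cite[1.4]{B-G}. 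Taking infima over $n$ then gives $\sigma_X(k) \geq \sigma_Y(k)$ for all $k$.

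The main obstacle I anticipate is the bookkeeping in (a) showing $X_n \wdg Y_n$ genuinely serves as an $n$-skeleton of $X \wdg Y$ and that the category of the wedge inclusion splits as a maximum; both are standard but require a little care with the convention that skeleta are only maps $X_n \to X$ (not literal subcomplexes), so the "collapse" maps must be described as genuine maps rather than literal quotients. Once the skeleton claim and the $\max$ formula are in hand, the passage to sequences is formal.
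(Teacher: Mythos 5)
Your proposal is correct and follows essentially the same route as the paper: part (a) rests on the formula $\cat(f\wdg g)=\max\{\cat(f),\cat(g)\}$ applied to skeleton inclusions (you additionally spell out why $X_n\wdg Y_n$ is an $n$--skeleton of $X\wdg Y$ and sketch the cover argument, which the paper simply cites), and part (b) factors the skeleton inclusion $X_{n-1}\to X$ through $Y_{n-1}\to Y$ via a cellular lift and the retraction, then invokes \cite[1.4]{B-G} exactly as in the proof of \fullref{prop:invariant}. No gaps.
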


\begin{proof}
Part (a) follows   from  the 
formula
$\cat(f\wdg g) = \max\{ \cat(f), \cat(g)\}$.
For (b), we consider the homotopy commutative diagram
$$
\xymatrix{
X_{n-1} \ar@{..>}[rr]^s \ar[d]^i && Y_{n-1} \ar[d]^j
\\
X\ar[rr] \ar@(rd,ld)[rrrr]_{\id_X} && Y \ar[rr] && X
}
$$
in which the map $s$ exists by cellular approximation.
It follows from the commutativity of the diagram
 that $\cat_X(X_{n-1}) = \cat(i) \leq \cat(j) = \cat_Y(Y_{n-1})$.
Now $\sigma_Y(k) = n$ implies that $\cat_Y(Y_{n-1})< k$
and hence that $\cat_X(X_{n-1}) <k$.  Therefore $\sigma_X(k) \geq n
= \sigma_Y(k)$.
\end{proof}

Our next result recasts the classical cup length lower bound for 
Lus\-ter\-nik--Schnir\-el\-mann category in terms of sequences.

\begin{prop}\label{prop:cuplength}
For any space $X$ and any ring $R$, $\sigma_X \leq \sigma_{H^*(X;R)}$.
\end{prop}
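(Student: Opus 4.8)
The plan is to show directly that if $\sigma_{H^*(X;R)}(k) = n$, then $\sigma_X(k) \leq n$, i.e.\ $\cat_X(X_n) \geq k$. By definition of the product length sequence, $\sigma_{H^*(X;R)}(k) = n$ means there is a nontrivial $k$--fold product $u = u_1 u_2 \cdots u_k \neq 0$ in $H^n(X;R)$, where each $u_i \in \wwtilde H^{*}(X;R)$ lies in positive degree. The classical cup length lower bound for Lusternik--Schnirelmann category says that the existence of such a product forces $\cat(X) \geq k$; the content here is to promote this to a statement about the \emph{relative} category of the $n$--skeleton.

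First I would fix an $n$--skeleton $i\co X_n \to X$ and recall that, by the G\,W Whitehead characterization in Definition~7, it suffices to show that the composite
$$
\xymatrix{
(X_n,*) \ar[r]^-{i} & (X,*) \ar[r]^-{\Delta_k} & (X,*)^k
}
$$
does \emph{not} factor, up to homotopy of pairs, through the fat wedge $T^k(X)$ (equivalently, that $\cat(\Delta_k \of i) \geq k-1$ in the reduced indexing, so $\cat(i) \geq k$ in the convention used here). The standard argument: a factorization through the fat wedge would, on applying $H^*(-;R)$ and using the fact that the reduced cohomology of the $k$--fold fat wedge contains no nonzero $k$--fold external products, force $(i^* u_1)\otimes \cdots \otimes (i^* u_k)$ to pull back to zero along the diagonal, hence $i^*(u_1\cdots u_k) = 0$ in $H^n(X_n;R)$. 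So the key point is that $i^*\co H^n(X;R)\to H^n(X_n;R)$ is \emph{injective} on the relevant class: since $i$ is an $n$--equivalence, $i^*$ is an isomorphism in degrees $< n$ and injective in degree $n$ (this is exactly condition (b) of Lemma~2, or a direct consequence of the definition of $n$--skeleton). Therefore $i^*(u) \neq 0$ in $H^n(X_n;R)$, which contradicts the existence of the fat-wedge factorization. Hence $\cat_X(X_n) = \cat(i) \geq k$, giving $\sigma_X(k) \leq n = \sigma_{H^*(X;R)}(k)$.

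Since $k$ was arbitrary, this yields $\sigma_X \leq \sigma_{H^*(X;R)}$ as sequences. The only case needing separate comment is $k \leq 1$: for $k=0$ both sides are $0$, and for $k=1$, $\sigma_{H^*(X;R)}(1)$ is the least degree carrying nontrivial reduced cohomology, which is at least the connectivity bound $c$ where $\sigma_X(1)=c$ (Remark~11(b)); so the inequality holds trivially there. One should also note the degenerate case where $H^*(X;R)$ has no nontrivial $k$--fold product at all, so $\sigma_{H^*(X;R)}(k) = \infty$ and there is nothing to prove.

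The main obstacle, and the only place requiring care, is the bookkeeping in the fat-wedge / diagonal argument: one must run the classical cup-length proof \emph{relative to} the skeleton rather than for the whole space, keeping track of the fact that the diagonal $\Delta_k$ is taken on $X$ (not on $X_n$) and that the crucial cohomological input is the injectivity of $i^*$ in degree exactly $n$. Everything else—nonvanishing of the external product in $H^*(X^k;R)$, triviality of $k$--fold products in the fat wedge—is standard and may be cited.
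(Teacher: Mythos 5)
Your argument is correct and is essentially the paper's own proof: both restrict a nontrivial $k$--fold product to the $n$--skeleton, use the injectivity of $i^*\co H^n(X;R)\to H^n(X_n;R)$ coming from the definition of an $n$--skeleton, and invoke the cup-length lower bound for the category of the map $i$. The only difference is that you unpack the fat-wedge/Whitehead argument behind that lower bound, whereas the paper simply cites it (Cornea--Lupton--Oprea--Tanr\'e, Proposition 1.5).
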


\begin{proof}
Suppose that  $\sigma_{H^*(X)}(k) = n$, so  there is a nontrivial $k$-fold 
cup product $u_1 \cdots u_k \in H^{n}(X)$.    
Let $i \co X_n \to X$ be an $n$-skeleton.  
Then $i$ induces an injection  $i^* \co H^{n}(X) \to H^{n}(X_n)$,
so  $i^*(u_1 \cdots u_k) = i^*(u_1) \cdots i^*(u_k) \neq 0\in H^n(X_n)$.
Therefore $\cat(X_n) \geq k$ (see Cornea, Lupton, Oprea and Tanr\'e
\cite[Proposition 1.5]{CLOT})
and so $\sigma_X(k) \leq n$.
\end{proof}

\fullref{prop:cuplength} can be used to determine the 
categorical sequence of a product of spheres.
This simple corollary will play an important role in our characterization
of the categorical sequences of formal rational spaces (\S5).

\begin{cor}\label{cor:WOS} 
If  $X = S^{n_1} \cross \cdots \cross S^{n_r}$ 
with $n_1\leq n_2 \leq \cdots \leq n_r$,  then 
$\sigma_X$ is given by the formula
$
\sigma_X (k) =  \sigma_{H^*(X)} (k) = n_1 + n_2 + \cdots + n_k 
$
for $1\leq k\leq r$ and $\sigma_X(k) = \infty$ for $k>r$.
\end{cor}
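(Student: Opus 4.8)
The plan is to combine the general upper bound $\sigma_X \leq \sigma_{H^*(X;R)}$ from \fullref{prop:cuplength} with a matching lower bound coming from the skeletal structure of a product of spheres. Concretely, I would first compute $\sigma_{H^*(X)}$ purely algebraically: with $H^*(X;\ZZ) = \Lambda(x_1,\ldots,x_r)$ an exterior algebra on classes $x_i$ of degree $n_i$ (taking field coefficients if some $n_i$ is even is harmless, or one simply works over $\ZZ$ and notes the relevant products are nonzero), a nontrivial $k$-fold product must be, up to reordering, a product $x_{i_1}\cdots x_{i_k}$ of $k$ \emph{distinct} generators, since $x_i^2$ is either zero or (for $n_i$ even) has the wrong parity to help. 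The minimal degree of such a product is obtained by choosing the $k$ smallest degrees, namely $n_1 + \cdots + n_k$, and no $k$-fold product exists at all once $k > r$. This gives $\sigma_{H^*(X)}(k) = n_1 + \cdots + n_k$ for $1 \leq k \leq r$ and $\sigma_{H^*(X)}(k) = \infty$ for $k > r$.

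Next I would establish the reverse inequality $\sigma_X \geq \sigma_{H^*(X)}$. For the finite values this amounts to showing $\cat_X(X_m) < k$ whenever $m < n_1 + \cdots + n_k$; for $k > r$ it amounts to $\cat(X) \leq r$. The latter is standard (category is subadditive on products, and $\cat(S^{n_i}) = 1$, so $\cat(X) \leq r$). For the former, the key point is that $X = S^{n_1}\cross\cdots\cross S^{n_r}$ has a natural CW structure whose $m$-skeleton, for $m < n_1+\cdots+n_k$, is contained in a union of at most $k-1$ of the `sub-products' one gets by collapsing coordinates — more precisely, a cell of $X$ of dimension $\leq m$ involves a proper sub-sphere-product using fewer than $k$ of the factors, because including $k$ or more factors already forces dimension $\geq n_1+\cdots+n_k$. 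Hence $X_m$ maps into such a sub-product $P$, and since $\cat(P) \leq (\text{number of factors}) \leq k-1$, the inclusion $X_m \inclds X$ factors through a space of category $\leq k-1$, so $\cat_X(X_m) \leq k-1 < k$. Therefore $\sigma_X(k) \geq n_1 + \cdots + n_k$, and combined with \fullref{prop:cuplength} this forces equality; for $k > r$, finiteness of $\cat(X)$ forces $\sigma_X(k) = \infty$ as well.

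I expect the main obstacle to be the second paragraph's claim that $X_m$ deforms into a low-category sub-product when $m$ is small — this is the only place requiring genuine (if elementary) argument rather than citation. The cleanest way to make it rigorous is to fix the standard product CW structure on $X$, observe that its cells are indexed by subsets $T \subseteq \{1,\ldots,r\}$ with the cell $e_T$ of dimension $\sum_{i\in T} n_i$, and note that the $m$-skeleton is the union of the $e_T$ with $\sum_{i\in T} n_i \leq m$; any such $T$ has $|T| \leq k-1$ (else $\sum_{i\in T}n_i \geq n_1+\cdots+n_k > m$). Then $X_m$ lies in the union $\bigcup_{|T|=k-1} P_T$ where $P_T = \prod_{i\in T} S^{n_i}$ (a genuine subcomplex, up to the basepoint), and one checks directly that the inclusion of this union into $X$ has category $\leq k-1$, either via G\,W Whitehead's diagonal characterization or by covering $X_m$ with $k-1$ open sets each contractible in $X$. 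Alternatively, and perhaps more slickly, one can invoke \fullref{rem:increasing}(d): if $\sigma_X(k) = n$ then $H^n(X)\neq 0$, and combining this with \fullref{prop:cuplength} and the subadditivity \fullref{thm:subadditive}(a) pins down the sequence; but since \fullref{thm:subadditive} comes later, I would keep the proof self-contained using the skeletal argument above.
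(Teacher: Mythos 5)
Your argument is correct and is essentially the paper's: the union of sub-products $P_T=\prod_{i\in T}S^{n_i}$ with $|T|\leq k-1$ is exactly the subspace $X(k-1)=\{(x_1,\ldots,x_r)\mid \mathrm{at\ least}\ r-k+1\ \mathrm{entries\ are}\ *\}$ that the paper uses, which contains the cellular $(n_1+\cdots+n_k-1)$--skeleton and satisfies $\cat(X(k-1))\leq k-1$ because $X(0),\ldots,X(r)$ form a cone decomposition of $X$; the upper bound via \fullref{prop:cuplength} is also identical. The only difference is that the paper cites the cone decomposition as well known where you propose to verify the category bound directly (note the small slip there: $\cat\leq k-1$ requires a cover by $k$ open sets, not $k-1$).
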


\begin{proof}
Clearly $\sigma_{H^*(X)} (k) = n_1 + n_2 + \cdots + n_k$, and 
\fullref{prop:cuplength} implies that 
$\sigma_X \leq   \sigma_{H^*(X)}$.
For the reverse inequality, let 
$$
X(k) = \{ (x_1, \ldots, x_r)\, |\,   
\mathrm{at\ least}\ r-k\ \mathrm{entries\ are}\  *\}  
\sseq X .
$$
It is well-known that $X(0), X(1), \ldots, X(r)$   
constitute a (spherical) 
cone decomposition of $X$.  Furthermore,
$X(k-1)$ contains the cellular $(n_1 + n_2 + \cdots + n_k-1)$--skeleton 
of $X$, 
and so 
$$
\cat(X_{n_1 + n_2 + \cdots + n_k-1}) \leq \cat(X(k-1)) < k .  
$$
Therefore 
$\sigma_X (k) \geq  n_1 + n_2 + \cdots + n_k = \sigma_{H^*(X)} (k) $.
\end{proof}

The following theorem  gives surprisingly strong algebraic control
over categorical sequences.
The proofs of parts (b) and (c) in full generality depend on 
the positive solution to Whitehead's Problem;  but they are valid
in ordinary ZFC set theory if $X$ is of finite type.

\begin{thm}\label{thm:subadditive}
For any space $X$,
\begin{problist}
\item  
$\sigma_X(k+l) \geq \sigma_X (k) + \sigma_X(l)$,  
\item  
if $X$ is simply-connected   and $\sigma_X(k) = n$, then 
$H^n(X;A)\neq 0$ for some coefficient group $A$, and 
\item  if equality occurs in (a) and $X$ is simply-connected, 
then the cup product 
$$
H^k(X;A) \otimes H^l(X;B) \to H^{k+l}(X;A\otimes B)
$$
is nontrivial for some choice of coefficients.
\end{problist}
\end{thm}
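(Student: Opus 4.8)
The plan is to prove all three parts simultaneously by exploiting G\,W Whitehead's diagonal-map characterization of category together with the factorization
$$
\Delta_{k+l} = (\Delta_k \cross \Delta_l)\of \Delta_2 \co X \longrightarrow X^2 \longrightarrow X^k \cross X^l = X^{k+l}.
$$
For part (a), set $m = \sigma_X(k)$ and $n = \sigma_X(l)$; I want to show $\cat_X(X_{m+n-1}) < k+l$, which forces $\sigma_X(k+l) \geq m+n$. Pick skeleta $i\co X_m\to X$ and $j\co X_n\to X$. Since $\cat_X(X_{m-1}) < k$ and $\cat_X(X_{n-1}) < l$, the compositions $\Delta_k\of i$ restricted to $X_{m-1}$ and $\Delta_l\of j$ restricted to $X_{n-1}$ factor through fat-diagonal-type subspaces; more efficiently, using the open-cover definition, $X_{m-1}$ has a cover by $k$ open sets on which the inclusion into $X$ is null, and $X_{n-1}$ has one by $l$ such sets. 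The product $X_{m-1}\cross X_{n-1}$ then has a cover by $kl$ open sets, but using the join/diagonal structure one can do better: the key classical fact (the same one underlying the product inequality $\cat(A\cross B)\le\cat A+\cat B$) is that if $\cat_X(A) < k$ and $\cat_X(B)<l$ then $\cat_X(A\cross B) < k+l$ via the Ganea lift and the fiberwise join. Applying this with $A = X_{m-1}$, $B = X_{n-1}$ shows $\cat_X(X_{m-1}\cross X_{n-1}) < k+l$; since the cellular $(m+n-1)$--skeleton of $X^2$ — and hence, after composing with $\Delta_2$, a suitable $(m+n-1)$--skeleton's worth of $X$ — maps through $X_{m-1}\cross X_{n-1}$ up to the relevant dimension, we get $\cat_X(X_{m+n-1}) < k+l$.

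For part (b), suppose $X$ is simply-connected with $\sigma_X(k)=n$ but $H^n(X;A)=0$ for every coefficient group $A$. Then \fullref{lem:homsection}(b) supplies an $(n{-}1)$--dimensional $n$--skeleton $i\co X_n\to X$. By \fullref{prop:WellDefined}(a), $\cat(X_n) = \cat_X(X_n)$, and since $X_n$ is $(n{-}1)$--dimensional while the previous skeleton $X_{n-1}$ can be taken equal to $X_n$ up to homotopy (indeed the homology decomposition gives $X(n)\simeq X(n-1)$ in this case), we get $\cat_X(X_{n-1}) = \cat_X(X_n) \geq k$, contradicting the definition $\sigma_X(k) = \inf\{m \mid \cat_X(X_m)\geq k\} = n$. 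Hence $H^n(X;A)\neq 0$ for some $A$. This is where the set-theoretic hypothesis $\star$ enters, through \fullref{lem:homsection}(b).

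For part (c), assume equality $\sigma_X(k+l) = \sigma_X(k) + \sigma_X(l) = m+n$ with $m = \sigma_X(k)$, $n=\sigma_X(l)$, and suppose for contradiction that every cup product $H^k(X;A)\tensor H^l(X;B)\to H^{k+l}(X;A\tensor B)$ is trivial; here $k+l$ should read $m+n$ in the gradings, i.e. $H^m(X;A)\tensor H^n(X;B)\to H^{m+n}(X;A\tensor B)$ vanishes. The idea is to run the argument of part (a) one dimension higher: the diagonal factorization $\Delta_{k+l}$ together with the vanishing of this external product should allow one to compress the $(m+n)$--skeleton — not just the $(m+n-1)$--skeleton — into a product $X_{m}'\cross X_{n}'$ of lower skeleta (or a fat-wedge-type subspace), using that the obstruction to lifting across the relevant Ganea fibration at the top cell is detected precisely by that cup product. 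Concretely, one analyzes the attaching maps of the $(m+n)$--cells of $X$ relative to $\Delta_{k+l}$: triviality of the product means each such cell's diagonal image already lifts, so $\cat_X(X_{m+n}) < k+l$, forcing $\sigma_X(k+l) > m+n$ — the desired contradiction.

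The main obstacle will be part (c): making rigorous the claim that triviality of the external cup product $H^m(X;A)\tensor H^n(X;B)\to H^{m+n}(X;A\tensor B)$ for \emph{all} coefficients is exactly enough to lift the diagonal of the top skeleton. This requires a Hopf-invariant / obstruction-theoretic analysis of the cofiber sequence attaching the $(m+n)$--cells, identifying the primary obstruction to the lift across $G_{k+l-1}(X)\to X$ with a cohomology operation that, under the factorization through $\Delta_2$, is governed precisely by products of classes coming from $X_{m-1}$ and $X_{n-1}$ — and then invoking \fullref{lem:homsection}(a) (again using $\star$) to know $H_{m+n-1}(X;\ZZ)$ and $H_{m}(X_{m-1}')$-type groups are free so that universal coefficients detect the product on the nose. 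Parts (a) and (b), by contrast, I expect to go through fairly directly once the product inequality for relative category and the skeletal consequences of \fullref{lem:homsection} are in hand.
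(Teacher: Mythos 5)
Your part (b) is correct and coincides with the paper's argument: an $(n{-}1)$--dimensional $n$--skeleton is also an $(n{-}1)$--skeleton, so its existence would force $\cat_X(X_{n-1})=\cat_X(X_n)\geq k$, and \fullref{lem:homsection}(b) rules this out unless some $H^n(X;A)$ is nonzero. The problems are in (a) and (c). In (a), the step that fails is the assertion that the cellular $(m{+}n{-}1)$--skeleton of $X\cross X$ (equivalently, a cellular approximation of $\Delta_2|_{X_{m+n-1}}$) maps through $X_{m-1}\cross X_{n-1}$. It does not: $(X\cross X)_{m+n-1}=\bigcup_{i+j\leq m+n-1}X_i\cross X_j$ contains $X_{m+n-1}\cross\{*\}$, and a deformation of $\Delta_2|_{X_{m+n-1}}$ into $X_{m-1}\cross X_{n-1}$ would, after projection to the first factor, compress $X_{m+n-1}\inclds X$ into $X_{m-1}$ --- impossible, since $H^m(X;A)\neq 0$ for some $A$ by your own part (b). The correct receptacle is the \emph{union} $X_{m-1}\cross X\cup X\cross X_{n-1}$ (if $i+j\leq m+n-1$ then $i\leq m-1$ or $j\leq n-1$), i.e.\ the subspace of the product of pairs $(X,X_{m-1})\cross(X,X_{n-1})$; Whitehead's criterion gives factorizations of $\Delta_k$ and $\Delta_l$ through the pairs $(X,X_{m-1})$ and $(X,X_{n-1})$, and these carry the union into the fat wedge of $X^{k+l}$, whence $\cat_X(X_{m+n-1})<k+l$. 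Your product subspace together with $\cat(i\cross j)\leq\cat(i)+\cat(j)$ cannot be substituted for this.

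For (c) you have named the difficulty but not resolved it, and the route you propose --- showing that vanishing of the cup product kills the primary obstruction to lifting the diagonal of the top skeleton, via Hopf invariants and attaching maps of the $(m{+}n)$--cells --- is both unexecuted and harder than necessary. The paper argues directly: if $\sigma_X(k+l)=m+n$, then in the diagram of (a), run with $X_{m+n}$ on top, the map of pairs $(X_{m+n},*)\to(X,X_{m-1})\cross(X,X_{n-1})$ must be nontrivial, since otherwise $\Delta_{k+l}|_{X_{m+n}}$ would factor through the trivial pair and $\cat_X(X_{m+n})<k+l$. Blakers--Massey makes the collapse onto $(X/X_{m-1}\smsh X/X_{n-1},*)$ an $(m{+}n{+}1)$--equivalence, and $X/X_{m-1}\smsh X/X_{n-1}\to K(\pi_m,m)\smsh K(\pi_n,n)\to K(\pi_m\tensor\pi_n,m+n)$ are $(m{+}n{+}1)$--equivalences as well; since $X_{m+n}$ is $(m{+}n)$--dimensional, nontriviality persists through the composite, which is exactly the cup product $\mu_m\cdot\mu_n$ of the classes produced in (b). (You are right that the gradings in the statement should be $\sigma_X(k)$ and $\sigma_X(l)$ rather than $k$ and $l$.) No obstruction-theoretic analysis of individual cells and no appeal to \fullref{lem:homsection}(a) or to freeness of homology is needed at this step; the set-theoretic hypothesis enters only through part (b).
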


\begin{proof}
Write $\sigma_X(k) = a$ and $\sigma_X(l) = b$.  Then
$\cat(X_{a-1}) = k-1$ and $\cat(X_{b-1}) = l-1$, which means
that there are factorizations
$$
(X,*) \to (X,X_{a-1}) \to  (X,*)^k
\qquad \mathrm{and} \qquad 
(X,*) \to (X,X_{b-1}) \to (X,*)^l 
$$
of $\Delta_k$ and $\Delta_l$, up to homotopy of pairs.
Putting these together using cellular approximation and
the factorization $\Delta_{k+l} = 
(\Delta_k \cross \Delta_l)\of \Delta_2$, we obtain the homotopy-commutative
diagram  of pairs
{\footnotesize 
$$
\xymatrix{
(X_{n},*) \ar[r] \ar[d]
   & (X,*) \ar[r]^-{\Delta_2} \ar[d] 
   & (X,*)\cross (X,*) \ar[r]^-{\Delta_k \cross \Delta_l}\ar[d] 
   & (X,*)^k\cross (X,*)^l \ar@{=}[d]
\\
(X_{n},*) \ar[r] \ar[d]\ar@(ru,lu)@{..>}[rr]^(.3){\alpha}
   & (X,*) \ar[r] \ar[d]
   & (X,X_{a-1}) \cross (X,X_{b-1}) \ar[r] 
   & (X,*)^k\cross (X,*)^l
\\
(X_{n},X_{a+b-1}) \ar[r]
  & (X,X_{a+b-1}) \ar[r] 
   & (X\cross X, (X\cross X)_{a+b-1}).\ar[u] 
}
$$}
Taking $n= a+b-1$ we see that $\Delta_{k+l}|_{X_{a+b-1}}$
factors, up to homotopy of pairs, through $(X_{a+b-1},X_{a+b-1})$,
and so 
$\cat_X(X_{a+b-1}) < k+l$ by the Whitehead definition and \fullref{prop:WellDefined}.
Therefore $\sigma_X(k+l)\geq a+b$, proving (a).

Now we prove part (b).
If $\sigma_X(k) = n$, then 
$\cat_X(X_n)> \cat_X(X_{n-1})$,
so $X$ does not have an $(n{-}1)$--dimensional $n$--skeleton.
By \fullref{lem:homsection}(c), then,
it cannot be that $H^n(X;A) = 0$ for all $A$.

To prove the statement (c) about cup products,  we first recall that 
by \fullref{thm:subadditive}(b),  if $\sigma_X(i) = m$,
then $H^m(X;A)\neq 0$ for some coefficient group $A$.  
Let $u\in H^m(X;A)$ be nonzero, and interpret it as a map
$u \co X \to K(A,m)$.  This map factors
$$
\xymatrix{
X \ar[rd]^{\mu_m} \ar[d] \ar@/^/[rrd]^-u
\\
X/X_{m-1} \ar[r]_-{\kappa_m} & K(\pi_m, m) \ar[r] & K(A,m),
}
$$
where $\pi_m = \pi_m(X/X_{m-1})$.  Since $u\not\simeq *$, 
$\mu_m\not \simeq *$ as well.    Note also that $K(\pi_m,m)$
may be constructed from $X/X_{m-1}$ by attaching cells of dimension
$m+2$ and higher, so $\kappa_m$ is an $(m{+}1)$--equivalence.

Since $(X,X_{a-1}) \cross (X,X_{b-1}) = (X\cross X, 
X\cross X_{b-1}\cup X_{a-1} \cross X)$ is an $(a{+}b{-}1)$--connected
pair and $X\cross X_{b-1}\cup X_{a-1} \cross X$ is $1$--connected, 
we apply the Blakers--Massey theorem (see Switzer \cite[Corollary
6.22]{Switzer})
to conclude that the collapse
map 
$$
(X,X_{a-1}) \cross (X,X_{b-1})\to (X/X_{a-1}  \smsh  X/X_{b-1},*)
$$
is an $(a{+}b{+}1)$--equivalence.  

Assuming $\sigma_X(k+l) = \sigma_X(k) + \sigma_X(l)= a+b$,
we may set $n = a+b$ in the diagram of part (a) and conclude that
the composite map  $(X_{a+b},*) \to (X,X_{a-1}) \cross (X,X_{b-1})$
is nontrivial.  Because the collapse map is an $(a{+}b{+}1)$--equivalence
and $X_{a+b}$ is $(a{+}b)$--dimensional, we see that the composition
$$
(X_{a+b},*) \to (X,X_{a-1}) \cross (X,X_{b-1})\to  (X/X_{a-1}  \smsh  X/X_{b-1},*)
$$
is also nontrivial.  Now the desired  cup product is
$$
\xymatrix{
 X_{a+b} \ar[d]\ar@/^/[rrd]^{\mu_a\cdot \mu_b}
\\
  X/X_{a-1}  \smsh X/X_{b-1} \ar[r] & K(\pi_a,a)\smsh K(\pi_b,b)
\ar[r] & K(\pi_a\otimes \pi_b, a+b),
}
$$
and it is nontrivial because the horizontal maps are all
$(a{+}b{+}1)$--equivalences and $X_{a+b}$ is $(a{+}b)$--dimensional.
\end{proof}

The following elementary computation illustrates the use of \fullref{thm:subadditive}.

\begin{authex}
{\rm
Let us consider the exceptional Lie group $G_2$.  It is known 
(see Mimura and Toda \cite{Mimura-Toda}) that 
$
H^*(G_2;\ZZ/2) \cong \left( \ZZ/2[x_3]/(x_3^4)\right) \otimes \Lambda(x_5).
$
Therefore  
$$
\sigma_{G_2} \leq \sigma_{H^*(G_2;\ZZ/2)} = (0,3,6,9,14,\infty,\ldots) 
$$
by \fullref{prop:cuplength}.
On the other hand, 
we know $\sigma_{G_2}(1) = 3$ by \fullref{rem:increasing}(b), so 
$
\sigma_{G_2} \geq (0,3,6,9,12, \infty,\ldots) 
$
by \fullref{thm:subadditive}(b,c);  this determines 
$\sigma_{G_2}$ except for $\sigma_{G_2}(4)$.  
However, $H^*(G_2;A) = 0$ for $* = 12,13$ 
and any abelian group $A$, so $\sigma_{G_2}(4) \neq 12, 13$
by \fullref{thm:subadditive}(b).  We conclude that
$
\sigma_{G_2} =  (0,3,6,9,14).
$
}
\end{authex}

\fullref{thm:subadditive}
implies the well-known result:  
$$\cat(X) \leq {\mathrm{dimension}(X)\over \mathrm{connectivity}(X)}$$  
In \cite{Ganea2}, Ganea  generalized this familiar upper bound
to obtain an upper bound for the category of $X$ in terms of the
set of dimensions in which $H^*(X)$ is nontrivial.  We now 
prove a further generalization by a completely different method.
For a space $X$, let 
$$
h(X) = \{ n   \,  |\,  \wwtilde{H}^n(X;G) \neq 0\ \mathrm{for\ some}\ G\} .
$$ 

\begin{cor}\label{cor:Ganea}
Let $X$ be  simply-connected and of finite type with
 $\sigma_X(k) = n$.  If  there are  integers
$0<a_1< a_2<  \cdots <   a_l$ such that 
$$
h(X) \sseq  
I_1 \cup 
I_2 \cup \cdots \cup
I_l  
$$
where $I_j = [a_j, a_j + (n-1)]$
(brackets denote closed  intervals in $\RR$), 
then $\cat(X) < k (l+1)$.   
\end{cor}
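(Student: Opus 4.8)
The plan is to combine the subadditivity inequality \fullref{thm:subadditive}(a) with the vanishing criterion \fullref{thm:subadditive}(b), using the interval hypothesis to show that the categorical sequence $\sigma_X$ cannot take more than $k(l+1) - 1$ finite values beyond $\sigma_X(0)=0$. Since $X$ is finite-dimensional, this bounds $\cat(X) = \mathrm{length}(\sigma_X)$. The key structural observation is that $\sigma_X(k) = n$ together with \fullref{thm:subadditive}(a) forces the values $\sigma_X(mk)$ to grow at least linearly: $\sigma_X(mk) \geq m\,\sigma_X(k) = mn$ for each $m\geq 1$. More precisely, for any $j$ with $0 \leq r < k$ we have $\sigma_X(mk + r) \geq \sigma_X(mk) + \sigma_X(r) \geq mn + \sigma_X(r)$, so the block of $k$ consecutive indices $\{mk+1, \ldots, (m+1)k\}$ has all its $\sigma_X$-values lying in a range that has been pushed up by at least $n$ compared to the previous block.

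First I would set up the following counting argument. Suppose, for contradiction, that $\cat(X) \geq k(l+1)$, i.e.\ $\sigma_X(k(l+1)) < \infty$. Then $\sigma_X$ takes finite values at $1, 2, \ldots, k(l+1)$, and by \fullref{rem:increasing}(d) these are strictly increasing, so by \fullref{thm:subadditive}(b) each value $\sigma_X(m)$ lies in $h(X) \sseq I_1 \cup \cdots \cup I_l$ (for $m\geq 1$; note $\sigma_X(1) = \mathrm{conn}(X)+1 \in h(X)$ since $X$ is simply-connected and not contractible, or the statement is vacuous). Now I would show that the $k(l+1)$ finite values $\sigma_X(1) < \sigma_X(2) < \cdots < \sigma_X(k(l+1))$ cannot all fit inside a union of $l$ closed intervals each of length $n-1$ (hence containing at most $n$ integers), given that the values advance by at least $n$ every $k$ steps. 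Indeed, group the indices into $l+1$ consecutive blocks $B_m = \{mk+1, \ldots, (m+1)k\}$ for $m = 0, 1, \ldots, l$. For $m \geq 1$, the smallest value in block $B_m$ is $\sigma_X(mk+1) \geq \sigma_X(mk) + \sigma_X(1) > \sigma_X(mk) \geq m\,\sigma_X(k) = mn$, while the largest value in block $B_{m-1}$ is $\sigma_X(mk)$; I need that no single interval $I_j$ can contain a value from block $B_{m-1}$ and a value from block $B_m$ simultaneously when those values differ by at least... — this is where the hypothesis $\sigma_X(k) = n$ enters: the spread forces each interval $I_j$ (of length $n-1$) to meet at most one block $B_m$, hence the $l$ intervals can cover values from at most $l$ of the $l+1$ blocks, contradicting the fact that every block $B_m$ contains a finite value lying in $h(X)$.

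The main obstacle will be making the block-separation argument airtight: I need to verify that if $I_j = [a_j, a_j + (n-1)]$ contains $\sigma_X(p)$ and $\sigma_X(q)$ with $p \in B_{m-1}$, $q \in B_m$, then a contradiction arises. Here one picks $p = mk$ (top of $B_{m-1}$) and $q = mk+1$ (bottom of $B_m$); then $\sigma_X(q) - \sigma_X(p) = \sigma_X(mk+1) - \sigma_X(mk)$, which is only guaranteed to be $\geq 1$, not $\geq n$ — so the naive version fails, and I must instead compare values two blocks apart, or argue that across any $k$ consecutive indices the total advance is $\geq n > n-1$, so $\sigma_X(mk+1) - \sigma_X((m-1)k+1) \geq \sigma_X(mk+1) - \sigma_X((m-1)k+1)$; using $\sigma_X(mk+1) \geq \sigma_X((m-1)k) + \sigma_X(k+1) \geq \sigma_X((m-1)k) + \sigma_X(k) + \sigma_X(1) > \sigma_X((m-1)k) + n \geq \sigma_X((m-1)k+1) + n - 1 \cdot(\text{something})$. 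So the clean statement to prove is: \emph{for each $m$, $\sigma_X(mk+1) > \sigma_X((m-1)k+1) + (n-1)$}, which shows the "leading value" of each block strictly exceeds the top of any interval anchored in the previous block's range, so the $l+1$ leading values $\sigma_X(1), \sigma_X(k+1), \sigma_X(2k+1), \ldots, \sigma_X(lk+1)$ are spread over a range larger than $l$ intervals of length $n-1$ can cover — in fact any two of them are $> n-1$ apart, so they lie in $l+1$ distinct intervals, which is impossible. Verifying the inequality $\sigma_X(mk+1) > \sigma_X((m-1)k+1) + (n-1)$ reduces, via \fullref{thm:subadditive}(a), to $\sigma_X(mk+1) \geq \sigma_X((m-1)k+1) + \sigma_X(k)$ and $\sigma_X(k) = n > n-1$; the only subtlety is the index bookkeeping $mk+1 = \big((m-1)k+1\big) + k$, which is immediate. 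Once this is in place, the pigeonhole conclusion — $l+1$ pairwise-far-apart values cannot lie in $l$ intervals of length $n-1$ — finishes the proof.
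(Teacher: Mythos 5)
Your proof is correct and rests on exactly the same two ingredients as the paper's: subadditivity (\fullref{thm:subadditive}(a)) forces $\sigma_X$ to advance by at least $n$ over every block of $k$ consecutive indices, while \fullref{thm:subadditive}(b) confines its finite values to the $l$ intervals of length $n-1$, with finite-dimensionality of $X$ converting $\sigma_X(k(l+1))=\infty$ into the category bound. The only difference is packaging: the paper runs a direct induction showing $\sigma_X(kj)\geq a_j$ (using the ordering of the $a_j$) and concludes $\sigma_X(k(l+1))>a_l+(n-1)$, whereas you argue by contradiction and pigeonhole on the $l+1$ values $\sigma_X(mk+1)$, which is essentially the same argument.
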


\begin{proof}
Consider the integers $\sigma_X(kj)$, $j= 1, 2, \ldots$.  
We show by induction that  $\sigma_X(kj) \geq a_j$.
If $\sigma_X(kj) = \infty$
we are done, so we assume that this value  is finite,
and hence is an element of $h(X)$    
by \fullref{thm:subadditive}(b).   Since $n\in h(X)$, 
$a_1 \leq n \leq  a_1 + (n-1)$.  
Now assume that  $\sigma_X(k(j-1)) \geq  a_{j-1}$.    
By \fullref{thm:subadditive}(a),
$$
\sigma_X(kj ) \geq \sigma_X(k(j-1)) + \sigma_X(k) \geq a_{j-1} + n,
$$
which implies that    
 $\sigma_X(kj)\not \in \bigcup_{t <  j} I_t$
and forces $\sigma_X(kj) \in \bigcup_{t \geq  j} I_t \sseq [ a_j,\infty)$.

In particular, $\sigma_X(kl ) \geq  a_l$, and  so 
$\sigma_X(k(l+1) ) > \sigma_X(kl) 
+ \sigma_X(k) = a_l + n$
by \fullref{thm:subadditive}(a).  
Thus $\sigma_X(k(l+1) )\not\in h(X)$,  and so
$\sigma_X(k(l+1) )  = \infty$.
Therefore  $\cat(X) < k(l+1)$ by \fullref{rem:increasing}(e),
since the hypotheses imply that $X$ is weakly equivalent to a finite
dimensional CW complex.
\end{proof}

Ganea's theorem is the special case $k = 1$ when $X$ is $(n-1)$--connected. 
It should be noted, though, that Ganea's result applies 
for \textit{strong category} (ie, cone length), where ours 
only applies for ordinary Lusternik--Schnirelmann category.
It would be interesting to know whether our generalization 
holds with cone length in place of category.

\subsection{Rational spaces}

The categorical sequence  $\sigma_X$
for a rational space $X$ can be easily bounded above
in terms of \textit{any one} of its models.

\begin{prop}\label{prop:model}
For any simply-connected rational space $X$, and any model $\A$ for  $X$,
$
\sigma_X \geq \sigma_\A
$.
\end{prop}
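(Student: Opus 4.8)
The plan is to prove the equivalent statement that, for every $k$, if $n<\sigma_\A(k)$ then $\cat_X(X_n)<k$; this forces $\sigma_X(k)=\inf\{n\mid \cat_X(X_n)\geq k\}\geq\sigma_\A(k)$. So fix $k$ and $n<\sigma_\A(k)$. By definition of the product length sequence, $\A^j$ contains no nontrivial $k$--fold product for $j\leq n$; that is, writing $\bar\A$ for the augmentation ideal, $(\bar\A^k)^j=0$ for all $j\leq n$. The Leibniz rule shows that $\bar\A^k$ is a differential ideal, so $\A'=\A/\bar\A^k$ is a CDGA with $\nil(\A')\leq k-1$, and the quotient map $q\co\A\to\A'$ is an isomorphism in degrees $\leq n$. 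Feeding the short exact sequence $0\to\bar\A^k\to\A\to\A'\to 0$ into the long exact cohomology sequence and using $(\bar\A^k)^{\leq n}=0$ shows that $q$ induces an isomorphism $H^*(\A)\to H^*(\A')$ for $*<n$ and a monomorphism for $*=n$.

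Next I would move from algebra to topology. Let $\B$ be the Sullivan minimal model of $\A'$ and let $Y$ be a simply-connected rational space whose minimal model is $\B$; since $\B$ is quasi-isomorphic to $\A'$ and $\nil(\A')\leq k-1$, \fullref{thm:Qcat} gives $\cat(Y)\leq k-1$. Composing a quasi-isomorphism $\M(X)\to\A$ with $q$ yields a CDGA map $\M(X)\to\A'$, which---since $\M(X)$ is a Sullivan algebra---factors up to homotopy through the quasi-isomorphism $\B\to\A'$; this produces a CDGA map $\M(X)\to\B=\M(Y)$ and hence a map $g\co Y\to X$ of rational spaces realizing it. On cohomology $g^*\co H^*(X;\QQ)\to H^*(Y;\QQ)$ is the map induced by $q$, so it is an isomorphism for $*<n$ and a monomorphism for $*=n$; therefore $g$ is an $n$--equivalence of simply-connected rational spaces.

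Finally, let $i\co X_n\to X$ be a rational $n$--skeleton of $X$. Since $X_n$ is rationally $n$--dimensional and $g$ is an $n$--equivalence, $i$ lifts through $g$: there is a map $\ell\co X_n\to Y$ with $g\of\ell\simeq i$ (this is exactly the lifting step used in the proof of \fullref{prop:invariant}, via Switzer \cite[Theorem 6.31]{Switzer}). Since the Lusternik--Schnirelmann category of a composite is no larger than that of either factor (Berstein and Ganea \cite[1.4]{B-G}),
$$
\cat_X(X_n)=\cat(i)=\cat(g\of\ell)\leq\cat(g)\leq\cat(\id_Y)=\cat(Y)\leq k-1 ,
$$
which is what we wanted. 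The step I expect to be the real obstacle is the middle one: manufacturing, from the purely algebraic quotient $\A'$ (which is where the hypothesis on products is used), an honest rational space $Y$ with small category together with a map $g\co Y\to X$ realizing the algebraic quotient map; once that is in place everything reduces to the standard fact that an $n$--skeleton of $X$ lifts through any $n$--equivalence onto $X$, and to degree bookkeeping.
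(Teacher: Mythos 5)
Your proof is correct, and it diverges from the paper's at one essential choice. The paper quotients $\A$ by the differential ideal of all elements of degree $\geq n$, where $n=\sigma_\A(k)$; the resulting CDGA $\B$ still has $\nil(\B)<k$ (any $k$--fold product in $\B$ lifts to a $k$--fold product in $\A$ of degree $<n$, which vanishes), but it is also concentrated in degrees $<n$, so its spatial realization $Z\to X$ is \emph{itself} a rational $(n-1)$--skeleton of $X$, and one reads off $\cat_X(X_{n-1})=\cat(Z)<k$ with no further work. You instead quotient by $\bar\A^k$, which makes the nilpotency bound tautological but does not truncate the algebra; your realization $Y$ is not a skeleton, and you pay for that with the extra step of lifting the skeleton $X_n$ through the $n$--equivalence $g\co Y\to X$. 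Both routes are sound, and yours has the mild advantage that one space $Y$ handles every $n<\sigma_\A(k)$ simultaneously. The one place you should tighten the write-up is the lifting step: Switzer's Theorem 6.31 applies to an honest CW complex of dimension $\leq n$, whereas the rational skeleton $X_n$ is only rationally $n$--dimensional ($H^{*}(X_n;\QQ)=0$ for $*>n$). The lift still exists---the homotopy fiber of $g$ is an $(n-1)$--connected rational space, so the obstructions lie in $H^{j+1}(X_n;\pi_j)$ with $j\geq n$ and rational vector space coefficients, and these groups vanish---but you should make that rational obstruction-theory argument explicit rather than cite the CW--dimension version.
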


\begin{proof}
Write $\sigma_\A(k) = n$ and let $\B$ be  the quotient of $\A$ by the 
differential  ideal consisting of all elements of 
dimension $n$ or greater.  Then $\nil(\B) <  k$ and the 
quotient   $q \co \A\to \B$ induces an isomorphism on cohomology
in dimensions $<n-1$ and an injection in dimension $n-1$.

Let $\N$ be the Sullivan minimal model for $\B$ and let 
$r \co \M(X) \to \N$ cover the map $q$.  Then $r$ has 
a \term{spatial realization} $i \co Z\to X$  such that  $q^* = i^* \co 
H^*(X) \to H^*(Z)$  (see F\'elix, Halperin and Thomas \cite[Chapter
17]{FHT0}).
It follows  that  $i \co Z\to X$ is a rational $(n-1)$--skeleton.
Since  $\M(Z)\sim  \B$ by construction and $\nil(\B) < k$,
we conclude using \fullref{thm:Qcat} that $\cat(Z) < k$.
It follows that $\sigma_X(k) \geq n$.
\end{proof}
 
\begin{authex}
{\rm
Let $(\A,d)$ be the CDGA with generators $x_n$, $y_m$ and $w_{n+m-1}$
(subscripts indicate dimension;  $2\leq n\leq m$)
subject to the relations $x^2 = y^2=w^2 = 0$ and with differential
determined by $dx = dy = 0$ and $dw = xy$.  This is not a Sullivan algebra,
but it does have a Sullivan model, $\M$, and $\M$ has a spatial realization,
$X$.    Then $\A$ is a model for $X$, and according to \fullref{prop:model},
$$
\sigma_X \geq \sigma_\A = (0,n,n+m, 2(n+m) -1, \infty,\infty, \ldots).
$$
But we can say even more, because the nonzero cohomology of 
$X$ occurs in dimensions $n, m, 2n+m-1, n + 2m-1$ and $2(n+m) -1$.
Since $X$ is indistinguishable from $S^n\wdg S^m$ through dimension
$n+m$, we know that $\sigma_X(2) > n+m$, and 
therefore  $\sigma_X(2) \geq 2 n+m  -1$.  Thus
$$
\sigma_X \geq  (0,n, 2n +m -1,  2(n+m) -1, \infty,\infty, \ldots).
$$
Since $\A$ is finite-dimensional, so is $H^*(X)$, and  
we  conclude that  $\cat(X) \leq 3$.    
}
\end{authex}

\section{Sequences and fibrations}

In this section we study the relationship between the sequences
$\sigma_F$, $\sigma_E$ and $\sigma_B$ 
when $F\to E\to B$ is a fibration sequence.  
Our general result is the key to a mapping theorem for 
categorical sequences of 
rational spaces.

\subsection{General spaces}
 
Our first result is proved by a slight generalization
of the method Hardie used to prove the main result of \cite{Hardie}.

\begin{prop}\label{prop:Hardie}
Consider the diagram
$$
\xymatrix{
&&  X \ar[d]_f \ar[rrd]^{p\of f}
\\
F\ar[rr]^q && E \ar[rr]^p && B
}
$$
in which the bottom row is a fibration sequence.  Then
$$
\cat(f) + 1 \leq  (\cat( p\of f) + 1)\cdot (\cat(q) + 1) .
$$
\end{prop}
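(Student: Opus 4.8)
The plan is to reduce the statement to a combination of two standard facts: first, that category of a composite is subadditive with respect to a fibration in the target, and second, that category behaves well under the Ganea/Whitehead characterization applied to $f$. The key observation is that a map $X \to E$ can be covered, up to homotopy, by open sets on which $f$ restricts to a map that is compressible into $F$, precisely because the bottom row is a fibration sequence: if $g\co U \to E$ becomes nullhomotopic after composing with $p$, then $g|_U$ lifts (up to homotopy) to the fiber $F$.

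**First I would** set $a = \cat(p\of f)$ and $c = \cat(q)$, and choose an open cover $X = U_0 \cup U_1 \cup \cdots \cup U_a$ on which each restriction $(p\of f)|_{U_i}$ is nullhomotopic. Using the fibration $p\co E \to B$ and the homotopy lifting property, the nullhomotopy of $p \of f|_{U_i}$ can be lifted, so that $f|_{U_i}\co U_i \to E$ is homotopic to a map that factors through the fiber inclusion $q\co F \to E$; call this factoring map $\bar f_i \co U_i \to F$. (One must be slightly careful about basepoints and about the homotopy versus strict factorization, but this is exactly the argument Hardie uses.) Next, since $\cat(q) = c$, for each $i$ there is an open cover $U_i = V_{i,0}\cup \cdots \cup V_{i,c}$ with $\bar f_i|_{V_{i,j}} \simeq *$, hence $q\of \bar f_i|_{V_{i,j}} \simeq *$, hence $f|_{V_{i,j}} \simeq *$. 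The sets $\{V_{i,j}\}$ as $i$ ranges over $0,\dots,a$ and $j$ over $0,\dots,c$ form an open cover of $X$ by $(a+1)(c+1)$ sets, each of which is carried to a point by $f$, so $\cat(f) + 1 \leq (a+1)(c+1) = (\cat(p\of f)+1)(\cat(q)+1)$.

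**The main obstacle** I anticipate is the passage from "the nullhomotopy of $p\of f$ on $U_i$" to "an honest lift of $f|_{U_i}$ through $q\co F\to E$." Strictly, converting $p$ to a fibration and applying the homotopy lifting property gives a homotopy $f|_{U_i} \simeq f'_i$ where $p\of f'_i = *$; then $f'_i$ factors set-theoretically through the strict fiber $p^{-1}(*)$, which is homotopy equivalent to $F$ via $q$ when $q$ is taken to be this inclusion. Since categorical sequences and $\cat$ depend only on homotopy type and all our diagrams are homotopy-commutative, replacing the given $q$ by the inclusion of the strict fiber is harmless — but this is the point where the argument needs to be stated carefully rather than waved through. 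Apart from that, the only other routine checks are that open covers of the $U_i$ (which are open in $X$) give open sets of $X$, and that the counting of $(a+1)(c+1)$ sets is correct; neither presents any real difficulty.

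**Finally**, I would remark that this proposition is the geometric heart of Theorem~\ref{thm:fibseq}: applying it with $X = E_n$ an $n$--skeleton, together with Proposition~\ref{prop:WellDefined} to convert back and forth between relative and absolute category, yields the skeletal inequalities $\sigma_E(k(a+1)) \geq \sigma_B(k)$ and $\sigma_E(k(b+1)) \geq \sigma_F(k)$ by iterating the bound. So in the proof I would keep the statement clean and purely about maps, leaving the translation into sequences for the subsequent corollary.
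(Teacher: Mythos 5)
Your argument is Hardie's, and it is essentially the same as the proof in the paper: cover $X$ by $\cat(p\of f)+1$ sets on which $p\of f$ is null, use the fibration to compress $f$ on each piece into the fibre, and then refine each piece using $\cat(q)$. The one slip is the intermediate claim that $\bar f_i|_{V_{i,j}}\simeq *$ --- what $\cat(q)=c$ actually yields (by pulling back a categorical cover of $F$ for $q$ along $\bar f_i$, equivalently by $\cat(q\of \bar f_i)\leq \cat(q)$) is $(q\of \bar f_i)|_{V_{i,j}}\simeq *$, which is the very next assertion in your chain and is all that is needed, so the conclusion is unaffected.
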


\begin{proof}
Suppose $\cat(p\of f) = k$ and that $\cat(q) = l$.
Then $X$ has a cover $X = A_0 \cup A_1 \cup \cdots \cup A_k$
by subcomplexes
such that $(p\of f) |_{A_i} \simeq *$ for each $i$.  Since 
$p$ is a fibration with fiber $F$, $f|_{A_i}$ factors (up to homotopy)
as $j\of g_i$, where $g_i \co A_i \to F$.
Therefore $\cat( f|_{A_i}) \leq \cat(q) = l$ and so we 
can write  $A_i = A_{i0}\cup A_{i1} \cup \ldots \cup A_{il}$
where $(q\of g_i )|_{A_{ij}} \simeq *$.    
Thus $X = \bigcup_{i,j} A_{ij}$ where $0\leq i\leq k$ and $0\leq j\leq l$
and  $f|_{A_{ij}} \simeq *$ for all $i$ and $j$.  
Therefore $\cat(f) +1 \leq (k+1)(l+1)$.
\end{proof}

Hardie's result is the special case in which $f= \id_E$.
We are interested in the more general situation in which $f \co X\to E$
is an $n$--skeleton.

\begin{thm}\label{thm:fibseq}
Let  
$$\xymatrix@1{F\ar[r]^q & E \ar[r]^p & B}$$
be a fibration sequence and write $a= \cat(q) \leq \cat(F)$
and $b = \cat(p) \leq \cat(B)$.  Then
\begin{problist}
\item
$\sigma_E(k(a+1)  ) \geq \sigma_B(k)$, and 
\item
$\sigma_E(k(b+1)) \geq \sigma_F(k)$.
\end{problist}
\end{thm}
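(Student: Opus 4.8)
The plan is to deduce \fullref{thm:fibseq} directly from \fullref{prop:Hardie}, using our abstract notion of $n$--skeleton and the basic properties of categorical sequences recorded in \fullref{rem:increasing}. The two parts are formally symmetric: part (a) comes from applying \fullref{prop:Hardie} to the fibration $p\co E\to B$ itself, while part (b) comes from applying it to the fibration $q\co F\to E$ viewed as a map into $E$ (so that the roles of ``total space'' and ``base'' are shuffled). I will carry out part (a) in detail and then indicate the modification for part (b).

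\textbf{Proof of (a).} First I would unwind what $\sigma_B(k)=m$ means: $\cat_B(B_{m-1})<k$, i.e., $\cat_B(B_{m-1})\le k-1$. I want to show $\sigma_E\bigl(k(a+1)\bigr)\ge m$, equivalently $\cat_E(E_{m-1})<k(a+1)$, i.e., $\cat_E(E_{m-1})\le k(a+1)-1=(k-1)(a+1)+a$. Let $i\co E_{m-1}\to E$ be an $(m{-}1)$--skeleton. The key point is that $p\of i\co E_{m-1}\to B$ is an $(m{-}1)$--equivalence into $B$, and since $E_{m-1}$ is $(m{-}1)$--dimensional, this map factors up to homotopy through an $(m{-}1)$--skeleton of $B$ by the lifting property \cite[Theorem 6.31]{Switzer}; hence $\cat(p\of i)\le \cat_B(B_{m-1})\le k-1$ by the monotonicity of $\cat$ under composition \cite[1.4]{B-G}. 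Now apply \fullref{prop:Hardie} to the diagram with $X=E_{m-1}$, $f=i\co E_{m-1}\to E$, and bottom row $F\to E\to B$: it gives
$$
\cat(i)+1\le (\cat(p\of i)+1)\cdot(\cat(q)+1)\le k\cdot(a+1),
$$
so $\cat_E(E_{m-1})=\cat(i)\le k(a+1)-1$, which is exactly what we wanted. Therefore $\sigma_E(k(a+1))\ge m=\sigma_B(k)$. (If $\sigma_B(k)=\infty$ there is nothing to prove, since the inequality is then vacuous once one checks the edge cases where $\sigma_E(k(a+1))$ could a priori be smaller — but in fact the argument above shows $\cat_E(E_{m-1})<k(a+1)$ for \emph{every} finite $m$ with $\cat_B(B_{m-1})<k$, which handles all relevant $m$.)

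\textbf{Proof of (b) and the main obstacle.} For part (b) I would play the same game, but now reading the fibration sequence one step to the left: $q\co F\to E$ is, up to homotopy, the fiber inclusion, and there is a fibration-sequence extension in which $E$ plays the role of the ``total space'' mapping to $B$ with fiber $F$ — but what I actually need is \fullref{prop:Hardie} applied with the bottom row being a fibration sequence whose fiber is (something with category $\le b$) and whose total space is $E$, so that an $n$--skeleton of $F$ produces a skeleton-level statement about $E$. Concretely: suppose $\sigma_F(k)=m$, so $\cat_F(F_{m-1})<k$; let $j\co F_{m-1}\to F$ be an $(m{-}1)$--skeleton and consider $q\of j\co F_{m-1}\to E$. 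Using the Ganea-type fibration $p_b\co G_b(B)\to B$ associated to $p$ (so $\cat(p)\le b$ means $p$ lifts to $G_b(B)$) and the fact that the homotopy fiber of $p$ is $F$, one arranges a fibration sequence $G_b \to E' \to \text{(something equivalent to }F\text{-skeleton level)}$ allowing \fullref{prop:Hardie} to be applied with $f$ an $(m{-}1)$--skeleton of $E$ and the fiber-category bounded by $b$; this yields $\cat_E(E_{?})+1\le k\cdot(b+1)$ and hence $\sigma_E(k(b+1))\ge m$. The \textbf{hard part} is precisely setting up this second fibration sequence honestly — one must check that replacing $F$, $E$, $B$ by their Ganea fibration surrogates does not disturb the skeletal/connectivity bookkeeping, and that the map $f$ one feeds to \fullref{prop:Hardie} really is an $n$--skeleton of $E$ with the fiber-category estimate $\le b$ rather than $\le \cat(F)$. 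Once that diagram chase is arranged, the inequality falls out of \fullref{prop:Hardie} just as in part (a). I expect parts (a) to be essentially routine and part (b) to require the careful identification of which fibration sequence to plug in.
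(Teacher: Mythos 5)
Your part (a) is correct and is essentially the paper's own argument: factor $p\of i$ through $B_{m-1}$, bound $\cat(p\of i)$ by $\cat_B(B_{m-1})\le k-1$, and feed this into \fullref{prop:Hardie}. No complaints there.

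Part (b) has a genuine gap, and the route you sketch is a dead end. First, a black-box application of \fullref{prop:Hardie} to an $(n{-}1)$--skeleton $i\co E_{n-1}\to E$ can only ever produce the factor $\cat(q)+1=a+1$, never the factor $k$ you need; so no amount of rearranging which fibration sequence you plug in will make the stated proposition deliver (b). Your attempt to manufacture an auxiliary fibration out of the Ganea fibrations of $B$ is not needed and is not going anywhere: the quantity to bound is $\cat_E(E_{n-1})$, and your opening move of studying $q\of j\co F_{m-1}\to E$ is looking at the wrong map entirely. What is actually required is to re-run the \emph{proof} of \fullref{prop:Hardie} at the skeletal level, with one extra cellular-approximation step. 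Concretely: let $n=\sigma_F(k)$, so $\cat_F(F_{n-1})=k-1$, and let $i\co E_{n-1}\to E$ be an $(n{-}1)$--skeleton. Since $\cat(p\of i)\le\cat(p)=b$, cover $E_{n-1}$ by subcomplexes $A_0,\dots,A_b$ with $(p\of i)|_{A_j}\simeq *$. Because $p$ is a fibration, each $i|_{A_j}$ factors up to homotopy through $q\co F\to E$; and because $\dim(A_j)\le n-1$, cellular approximation lets that factorization land in $F_{n-1}$. This is the step your plan never supplies, and it is where the factor $k$ comes from: now $\cat(i|_{A_j})\le\cat_F(F_{n-1})=k-1$, so each $A_j$ splits into $k$ pieces on which $i$ is nullhomotopic, giving $\cat(i)\le (b+1)k-1$ and hence $\sigma_E(k(b+1))\ge n$. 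With that substitution, part (b) is no harder than part (a); as written, your proposal does not contain a proof of (b).
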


\begin{proof}
Let $\sigma_B(k) = n$.  Thus $\cat(B_{n-1}) < k$
and we have to show that $\cat_E(E_{n-1}) < k(a+1)$.
Consider the homotopy-commutative diagram
$$
\xymatrix{
& E_{n-1} \ar[d]_i \ar@{..>}[r] & B_{n-1}\ar[d]
\\
F\ar[r]^q & E \ar[r]^p & B ,
}
$$
in which the dotted arrow exists by cellular approximation.
According to \fullref{prop:Hardie},
$$
\begin{array}{rcl}
\cat(i)   &\leq  & (\cat(p\of i) + 1) \cdot (\cat(q) + 1) -1
\\
 &< &(\cat(B_{n-1}) + 1) \cdot (a + 1) 
\\
 &< & k(a+1),
\end{array}
$$
proving (a). 

For part (b), we let $\sigma_F(k) = n$, so $\cat(F_{n-1}) = k-1$. 
Choose 
an  $(n-1)$--skeleton $i \co E_{n-1}\to E$.  Since
$\cat(p\of i) \leq \cat(p) = b$, we can write 
$E_{n-1} = A_0 \cup A_1 \cup \cdots \cup A_b$
where $A_j$ is a subcomplex of $E_{n-1}$
(so $\dim(A_j) < n$)
and $(p\of i)|_{A_j} \simeq *$ for each $j$.
Thus $i|_{A_j}$ factors (up to homotopy) through the 
$F\to E$, and so   we have the diagram
$$
\xymatrix{
F_{n-1}\ar[d] &  A_j \ar[d]^{i |_{A_j}} \ar@{..>}[l]  
\\
F\ar[r]_q & E \ar[r]_p & B ,
}
$$
in which the dotted arrow exists by cellular approximation.
This proves that  $\cat(i|_{A_j}) \leq \cat(F_{n-1}) = k-1$,
and so $\cat(i) < (b+1)k$, which implies the desired inequality
$\sigma_E((b+1)k) \geq n$.
\end{proof}

\begin{rem}
{\rm
These inequalities are not the best possible.  A quick look at the 
proof of \fullref{thm:fibseq} shows that, in studying the 
category of $E_n$, for example, the estimate $\cat(p\of i) \leq b$
can be improved to $\cat(p\of i) \leq \cat(B_n)$, and similarly
for the second formula.   We leave the cumbersome
formulation of the sharper results to the reader.
}
\end{rem}

Since the reverse formulas expressing $\sigma_E$ in terms of 
$\sigma_B$ and $\sigma_F$ are   not entirely obvious, we record them here.

\begin{cor}
In the situation of \fullref{thm:fibseq},  
\begin{problist}
\item
$\sigma_E(k) \leq \sigma_B\bigl( \bigl\lceil {k-a \over a+1} \bigr\rceil
\bigr)$, and\vrule width 0pt depth 7pt
\item
$\sigma_E(k) \leq \sigma_F\bigl( \bigl\lceil {k-b \over b+1} \bigr\rceil
\bigr)$.
\end{problist}
\end{cor}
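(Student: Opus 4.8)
The plan is to read off both inequalities as purely formal consequences of \fullref{thm:fibseq}, the only additional ingredient being the monotonicity of categorical sequences recorded in \fullref{rem:increasing}(c): if $j\le j'$ then $\sigma_E(j)\le\sigma_E(j')$, with the usual convention that $n\le\infty$.

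First I would isolate one elementary arithmetic fact: if $m:=\lceil (k-a)/(a+1)\rceil$, then $m$ is exactly the largest integer satisfying $m(a+1)\le k$. To verify this, write $k=q(a+1)+r$ with $0\le r\le a$; then $(k-a)/(a+1)=q-(a-r)/(a+1)$ lies in the interval $(q-1,q]$, so $m=\lceil (k-a)/(a+1)\rceil=q$, while $m(a+1)=k-r\le k<(m+1)(a+1)$. The identical remark applies with $a$ replaced by $b$, giving the largest integer $m$ with $m(b+1)\le k$.

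Part (a) then follows by a two-step chain. Because $m(a+1)\le k$, monotonicity of $\sigma_E$ compares $\sigma_E(k)$ with $\sigma_E(m(a+1))$; applying \fullref{thm:fibseq}(a) with the integer $m$ in place of $k$ compares $\sigma_E(m(a+1))$ with $\sigma_B(m)$; and these two comparisons combine by transitivity to give part (a), since $m=\lceil (k-a)/(a+1)\rceil$. Part (b) is proved the same way, with $a$ replaced by $b$, $\sigma_B$ replaced by $\sigma_F$, and \fullref{thm:fibseq}(b) used in place of \fullref{thm:fibseq}(a).

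I do not anticipate a genuine obstacle: all the topological content already lives in \fullref{thm:fibseq} (and, behind it, in \fullref{prop:Hardie}), and the corollary merely re-expresses that theorem at an arbitrary index $k$ rather than only at multiples of $a+1$ (respectively $b+1$). The single point needing care is the bookkeeping with the rounding: monotonicity can only be used to \emph{enlarge} the argument of $\sigma_E$, so one must pass to the largest admissible $m$ — the ceiling expression above — rather than, say, to $\lceil k/(a+1)\rceil$, and should double-check the behaviour for small $k$.
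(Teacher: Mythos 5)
Your two-step chain is surely the intended derivation: the paper records this corollary without proof, as a purely formal consequence of \fullref{thm:fibseq} together with the monotonicity of \fullref{rem:increasing}(c), and your identification of $m=\lceil (k-a)/(a+1)\rceil$ as the largest integer with $m(a+1)\le k$ is correct. The problem is that you never state the directions of your two ``comparisons'', and when they are made explicit the chain reads $\sigma_B(m)\le\sigma_E(m(a+1))\le\sigma_E(k)$: monotonicity gives $\sigma_E(m(a+1))\le\sigma_E(k)$ because $m(a+1)\le k$, while \fullref{thm:fibseq}(a) gives $\sigma_E(m(a+1))\ge\sigma_B(m)$. Transitivity therefore yields $\sigma_E(k)\ge\sigma_B\bigl(\lceil (k-a)/(a+1)\rceil\bigr)$, which is the \emph{reverse} of the inequality printed in the corollary. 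As written, your assertion that the two comparisons ``combine by transitivity to give part (a)'' is a non sequitur if part (a) is read literally.

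The discrepancy is not a flaw in your argument but in the statement: the printed ``$\le$'' cannot be correct. For the Hopf fibration $S^3\to S^7\to S^4$ the fiber inclusion is null, so $a=0$ and part (a) as printed would assert $\sigma_{S^7}(1)\le\sigma_{S^4}(1)$, i.e.\ $7\le 4$; the path fibration $\Omega K\to PK\to K$ similarly gives $\infty\le\sigma_K(1)$. The inequality signs are evidently typos for ``$\ge$'', which is exactly what your chain proves and what the phrase ``reverse formulas'' for \fullref{thm:fibseq} suggests. So your proof is correct for the corrected statement, but you should write the two inequalities with explicit directions, state the conclusion as $\sigma_E(k)\ge\sigma_B\bigl(\lceil(k-a)/(a+1)\rceil\bigr)$ (and analogously with $b$ and $\sigma_F$ in (b)), and flag that this is the opposite of the sign appearing in the text.
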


In \cite{Fa-Hu}, Fadell and Husseini studied the Lusternik--Schnirelmann
category of free loop spaces using a general result that relates
the category of the fiber and   the total space in a fibration sequence
with a section.   This result generalizes to a statement about 
categorical sequences.

\begin{cor}\label{cor:section}
Let  
$$\xymatrix@1{F\ar[r]  & E \ar[r]^p & B}$$
be a fibration sequence.
If $\om p$ has a section $s$, then $\sigma_E \leq \sigma_F$.
\end{cor}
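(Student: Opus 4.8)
The plan is to compare the relative categories of the skeleta of $E$ and $F$ term by term, transferring lifts through the Ganea fibrations of $E$ to lifts through those of $F$ by means of the section $s$. First observe that it suffices to prove
$$
\cat_E(E_m) \geq \cat_F(F_m)
$$
for all $m$ beyond the connectivity of the spaces: granting this, if $\sigma_F(k)=n$ then $\cat_F(F_n)\geq k$, hence $\cat_E(E_n)\geq k$, hence $\sigma_E(k)\leq n=\sigma_F(k)$.

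To exploit the hypothesis I would first pass to loop spaces. Looping $q$ and $p$ gives a fibration sequence $\om F\to \om E\to \om B$, and the assignment $\omega\mapsto \omega\cdot\bigl(s(\om p(\omega))\bigr)^{-1}$ defines, up to homotopy and using the $H$--structure and homotopy inverse on $\om E$, a retraction $\rho\co \om E\to \om F$ of $\om q$; note that no hypothesis on $B$ is needed for this. Recalling that the Ganea fibre $F_j(X)$ is homotopy equivalent to the $(j{+}1)$--fold join $(\om X)^{*(j+1)}$ \cite{CLOT} and that joins are functorial (so that a homotopy split monomorphism joins to a homotopy split monomorphism), it follows that for every $j$ the map $F_j(F)\to F_j(E)$ induced by $q$ admits a homotopy retraction.

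Now suppose $\cat_E(E_m)\leq j$. By Ganea's theorem $i_m\co E_m\to E$ lifts through $p_j\co G_j(E)\to E$; composing with a map $g\co F_m\to E_m$ satisfying $i_m\of g\simeq q\of\iota_m$ (such a $g$ exists since $\dim F_m\leq m$ and $i_m$ is an $m$--equivalence), where $\iota_m\co F_m\to F$, we obtain $\phi\co F_m\to G_j(E)$ lifting $q\of\iota_m$. Pulling the Ganea fibration of $E$ back along $q$ produces a fibration $Q=G_j(E)\times_E F\to F$ with fibre $F_j(E)$; the pair $(\phi,\iota_m)$ determines $\tilde\phi\co F_m\to Q$ over $F$, and the canonical map $\nu\co G_j(F)\to Q$ is a map over $F$ which on fibres is the split monomorphism $F_j(F)\to F_j(E)$. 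A fibrewise retraction $r\co Q\to G_j(F)$ of $\nu$ over $F$ then yields the lift $r\of\tilde\phi\co F_m\to G_j(F)$ of $\iota_m$, so $\cat_F(F_m)\leq j$, completing the argument.

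The main obstacle is precisely the construction of this fibrewise retraction $r$: this is an obstruction problem over $F$ with obstructions in $H^{*+1}(F;\pi_*(F_j(E)))$, and the point is that the fibre-level retraction $\rho^{*(j+1)}$ is natural in $F$, hence compatible with the $\pi_1(F)$--action on the fibre, so the obstructions vanish — in particular there is nothing to check when $F$ is simply connected, which is the case relevant to the applications. (One can also avoid the Ganea fibrations altogether and argue with the Whitehead diagonal characterization of category, exactly as in the proof of \fullref{thm:subadditive}: factor $\Delta_{j+1}\of i_m$ through $\Delta_{j+1}\co F\to F^{j+1}$ and then push the resulting factorization into the fat wedge of $F$ using the split monomorphisms of Ganea fibres. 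In either guise this is the skeleton-by-skeleton refinement of Fadell and Husseini's comparison of the categories of fibre and total space \cite{Fa-Hu}.)
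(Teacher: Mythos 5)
Your reduction to showing $\cat_F(F_m)\leq \cat_E(E_m)$ is correct, and so are the construction of the retraction $\rho\co \om E\to \om F$ from the section of $\om p$ and the conclusion, via the join description of the Ganea fibres, that $F_j(F)\to F_j(E)$ is a homotopy split monomorphism. The gap is exactly where you flag it: the passage from the fibre-level retraction $\rho^{*(j+1)}\co F_j(E)\to F_j(F)$ to a retraction $r\co Q\to G_j(F)$ \emph{over} $F$. A map of fibres extends to a fibrewise map over the base only if it commutes with the holonomies of the two fibrations up to coherent homotopy; that is extra data, not a consequence of the fibre-level map being ``natural in the space.'' Compatibility with the $\pi_1(F)$--action only puts the obstructions in untwisted coefficients, it does not kill them, and simple connectivity of $F$ is beside the point because the relevant obstructions occur in all degrees, not just degree one. (The obstruction problem is also not the one you name: you are not extending a section of a fibration with fibre $F_j(E)$, but building a fibrewise map between two different fibrations over $F$.) The parenthetical Whitehead-diagonal variant has the same problem in another guise: pushing $\Delta_{j+1}\of q\of \iota_m$ into the fat wedge of $E$ is easy, but pulling that factorization back to the fat wedge of $F$ again requires a fibrewise splitting. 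As written, the argument does not close.

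The paper sidesteps all of this by rotating the fibration sequence one step to the left: in $\om E\to \om B\to F\to E$, the section of $\om p$ forces the connecting map $\partial\co \om B\to F$ to be nullhomotopic, so $\cat(\partial)=0$, and \fullref{thm:fibseq}(a) applied to the fibration sequence $\om B\to F\to E$ gives $\sigma_F(k)=\sigma_F\bigl(k(\cat(\partial)+1)\bigr)\geq \sigma_E(k)$. Unwound, this is just Hardie's covering argument (\fullref{prop:Hardie}) applied to a skeleton of $F$: since the homotopy fibre inclusion of $q$ is trivial, $\cat(F_{n-1}\to F)\leq \cat(F_{n-1}\to E)\leq \cat_E(E_{n-1})$, which is precisely the skeleton-level inequality you were after. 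If you want to rescue the Ganea-fibration route you must prove the fibrewise splitting honestly (for instance by a genuinely fibrewise join construction), at which point you will have reproved, with more machinery, what the one-line rotation already gives.
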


\begin{proof}
Extend the given fibration sequence to the left to obtain  
$$
\xymatrix {
\om E         \ar[r]_{\om p}  
  & \om B    \ar[r]_\partial        \ar@/_/[l]_{ s}  
  & F           \ar[r] 
  & E.
}
$$
Since $\om p$ has a section, the map $\partial \co \om B \to F$ is trivial.  
Thus $\cat(\partial ) = 0$, and \fullref{thm:fibseq}(a)
implies
$
\sigma_F(k) = \sigma_F((\cat(\partial)+1) k) \geq  \sigma_E(k)
$. 
\end{proof}

We can now   expand upon the 
main homotopy-theoretical result of \cite{Fa-Hu}.

\begin{authex}
 {\rm
Let $L(X) = \map(S^1,X)$ denote the free loop space on $X$.
Evaluation at the basepoint determines a fibration $p \co L(X) \to X$
with fiber $\om X$, and the map $s \co x\mapsto l_x$, where $l_x$ is
 the constant map $l_x(S^1) = x$, is a section of $p$; 
thus $\om s$ is a section of $\om p$.
Therefore \fullref{cor:section} shows that 
$$
\sigma_{L (X)} \leq \sigma_{\om X}.
$$
In particular, $\cat(L(X)) = \infty$ if $\cat(\om X) = \infty$.
}
\end{authex}

\subsection{A mapping theorem for sequences}

One of the most powerful early results concerning the
Lusternik--Schnirelmann category of rational spaces is the 
Mapping Theorem (see F\'elix and Halperin \cite{FH});  the nice `book
proof' of this result (see F\'elix and Lemaire \cite{FL}) uses
\fullref{prop:Hardie} in the special case $\cat(j) = 0$.  
We use exactly the same argument to get an inequality for
categorical sequences.

\begin{prop}\label{prop:mapping}
Let $f \co X\to Y$ be a map between rational spaces which 
induces an injective map $f_* \co \pi_*(X)   \to \pi_*(Y)$.
Then 
$
\sigma_X \geq \sigma_Y
$.
\end{prop}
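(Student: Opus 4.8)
The plan is to mimic the ``book proof'' of the classical Mapping Theorem, using \fullref{prop:Hardie} in the special case where one of the categorical invariants vanishes. First I would recall that, since $f\co X\to Y$ induces an injection on rational homotopy groups, the homotopy fiber $F$ of $f$ has the property that $\partial\co \om Y\to F$ is trivial (the connecting map $\om Y\to F$ in the fiber sequence $\om Y\to F\to X\to Y$ is null because $f_*$ is injective). Equivalently, the fibration $F\to X\xrightarrow{f} Y$ admits, after looping, a section of $\om f$. This is exactly the hypothesis of \fullref{cor:section} applied to the fibration sequence $F\to X\to Y$.

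Next I would simply invoke \fullref{cor:section}: for a fibration sequence $F\to E\to B$ in which $\om p$ has a section we have $\sigma_E\leq\sigma_F$. Here the roles are played by $E=X$, $B=Y$, and $F$ the homotopy fiber of $f$, so the corollary gives $\sigma_X\leq\sigma_F$. It then remains to compare $\sigma_F$ with $\sigma_Y$. Because $f_*$ is injective on $\pi_*$, the long exact homotopy sequence of the fibration splits, so $\pi_*(F)\cong \pi_{*+1}(Y)/\pi_{*+1}(X)$ as graded groups; in particular $F$ is a rational space built from a sub-quotient of the homotopy of $Y$. I want to conclude $\sigma_F\geq\sigma_Y$, which combined with the previous inequality would give $\sigma_X\leq\sigma_F$ — but that is the wrong direction. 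Let me instead run the fibration the other way: consider the principal fibration $F\to X\to Y$ and note that it is the one relevant to \fullref{cor:section}, which yields $\sigma_X\le \sigma_F$; this is not yet $\sigma_X\ge\sigma_Y$.

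So the correct route is to apply \fullref{prop:Hardie} (or \fullref{thm:fibseq}) directly with the fibration $\om Y\to \om X\ldots$ replaced by the statement that $\cat$ of the inclusion of the fiber is $0$. Concretely: let $F\xrightarrow{q} X\xrightarrow{f} Y$ be the fibration sequence obtained by converting $f$ to a fibration. The hypothesis $f_*$ injective on $\pi_*$ forces the connecting map $\om Y\to F$ to be null, hence $q\co F\to X$ becomes null after looping, but more is true: one shows $\cat(q)=0$, i.e.\ $F\to X$ is itself null-homotopic, because $X$ is rational and the obstruction to nullhomotoping $q$ lies in groups killed by the splitting of the homotopy sequence (this is the standard rational-homotopy argument behind the Mapping Theorem). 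Granting $\cat(q)=0$, \fullref{thm:fibseq}(a) with $a=\cat(q)=0$ gives, for every $k$, $\sigma_X\bigl(k(a+1)\bigr)=\sigma_X(k)\geq \sigma_Y(k)$ — wait, \fullref{thm:fibseq}(a) is stated as $\sigma_E(k(a+1))\ge\sigma_B(k)$ with $E$ the total space and $B$ the base, i.e.\ here $\sigma_X(k)\ge\sigma_Y(k)$, which is exactly $\sigma_X\geq\sigma_Y$. That is the theorem.

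Thus the key steps, in order, are: (1) form the fibration sequence $F\to X\xrightarrow{f} Y$; (2) use injectivity of $f_*$ on $\pi_*$ to split the homotopy long exact sequence and deduce $\cat(q)=0$ for $q\co F\to X$ — invoking the rational-homotopy fact that a fiber inclusion whose connecting map is null is itself null when the total space is rational (this is the crux of the classical Mapping Theorem, and I would cite F\'elix--Halperin \cite{FH} or F\'elix--Lemaire \cite{FL}); (3) apply \fullref{thm:fibseq}(a) with $a=0$ to conclude $\sigma_X(k)\geq\sigma_Y(k)$ for all $k$. The main obstacle is step (2): establishing that $\cat(q)=0$, rather than merely that $q$ becomes null after one loop, requires genuinely rational input — one must know that the fibration $F\to X\to Y$ has a section up to homotopy, equivalently that $\om f$ splits, which for rational spaces follows from the injectivity of $f_*$ on homotopy together with the fact that rational $H$-spaces and fibrations behave like chain complexes. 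Everything after that is a formal appeal to the general fibration inequality \fullref{thm:fibseq} already proved.
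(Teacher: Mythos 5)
Your final argument is correct and is essentially the paper's own proof: take the homotopy fiber $q\co F\to X$ of $f$, use the standard Mapping Theorem argument (that injectivity of $f_*$ on rational homotopy forces $q\simeq *$, hence $\cat(q)=0$) and then apply \fullref{thm:fibseq}(a) with $a=0$ to get $\sigma_X(k)\geq\sigma_Y(k)$. The detour through \fullref{cor:section} is a dead end, as you yourself noticed, but the route you settle on is exactly the one in the paper, which cites \cite[Theorem 4.11]{CLOT} for the fact that $q$ is null-homotopic.
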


\begin{proof}
Let $q \co F\to X$ be the homotopy fiber of $f$. 
According to the proof of the standard Mapping Theorem,
the injectivity hypothesis on $f_*$ implies that 
$q\simeq *$ and so $\cat(q) = 0$ \cite[Theorem 4.11]{CLOT}.  
It now follows from 
\fullref{thm:fibseq} that $\sigma_X (k) \geq \sigma_Y(k)$
for all $k$.
\end{proof}

\section{Formal sequences}

A simply-connected space $X$ is \term{formal}
if its cohomology algebra, with trivial differential, is a model for $X$ 
\cite[page 156]{FHT0}.
In this section we characterize the categorical sequences
of formal rational spaces in several ways.

First we show that formal rational spaces  and their cohomology algebras
are isosequential.

\begin{prop}\label{prop:formal}\ 
If $X$ is a simply-connected 
formal rational space, then $\sigma_X = \sigma_{H^*(X)}$.
\end{prop}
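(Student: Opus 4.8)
The plan is to sandwich $\sigma_X$ between $\sigma_{H^*(X)}$ and $\sigma_{H^*(X)}$ using the two inequalities already available in the paper. First I would invoke \fullref{prop:cuplength}, which gives $\sigma_X \leq \sigma_{H^*(X;R)}$ for any ring $R$; taking $R = \QQ$ yields $\sigma_X \leq \sigma_{H^*(X;\QQ)} = \sigma_{H^*(X)}$ (where $H^*(X)$ denotes rational cohomology throughout this section, since $X$ is rational). This direction requires no formality hypothesis at all.

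For the reverse inequality $\sigma_X \geq \sigma_{H^*(X)}$, I would appeal to \fullref{prop:model}, which states that $\sigma_X \geq \sigma_\A$ for \emph{any} model $\A$ of a simply-connected rational space $X$. The whole point of the formality hypothesis is exactly that $\A = H^*(X)$, equipped with the zero differential, \emph{is} a model for $X$ in the CDGA sense; this is the definition of formality recalled at the start of the section. So applying \fullref{prop:model} with this particular choice of model gives $\sigma_X \geq \sigma_{H^*(X)}$ directly.

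Combining the two inequalities, $\sigma_X = \sigma_{H^*(X)}$, which is the claim. I expect essentially no obstacle here: the proposition is a formal consequence of \fullref{prop:cuplength} and \fullref{prop:model} once one observes that formality makes $H^*(X)$ an honest model. The only minor point worth stating explicitly is the identification $\sigma_{H^*(X)}$ (as a graded algebra, via the product-length sequence) with the same object appearing as $\sigma_\A$ in \fullref{prop:model} — but these are literally the same graded-algebra invariant applied to the same algebra, so there is nothing to check. One might also remark that the product-length sequence $\sigma_{H^*(X)}$ is insensitive to whether one regards $H^*(X)$ as a CDGA with trivial differential or just as a graded algebra, since $\sigma_\A$ depends only on the underlying multiplicative structure.

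\begin{proof}
By \fullref{prop:cuplength} (applied with $R = \QQ$), we have $\sigma_X \leq \sigma_{H^*(X;\QQ)} = \sigma_{H^*(X)}$. On the other hand, since $X$ is formal, its cohomology algebra $H^*(X)$, equipped with the trivial differential, is a model for $X$. Applying \fullref{prop:model} to this model yields $\sigma_X \geq \sigma_{H^*(X)}$. Combining the two inequalities gives $\sigma_X = \sigma_{H^*(X)}$.
\end{proof}
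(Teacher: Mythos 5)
Your proof is correct and follows exactly the paper's own argument: the paper likewise sandwiches $\sigma_X$ between the cup-length bound of \fullref{prop:cuplength} and the model bound of \fullref{prop:model}, using formality to make $H^*(X)$ with trivial differential a model. Nothing further to add.
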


\begin{proof}
By assumption, $H^*(X)$ is a model for $X$.
 Propositions \ref{prop:model}
and \ref{prop:cuplength} show that
$
\sigma_{H^*(X)} \leq \sigma_X \leq \sigma_{H^*(X)}
$,
which proves the result.
\end{proof}

Our main result in this section
completely characterizes the sequences which can occur as
categorical sequences of simply-connected rational formal spaces. 

\begin{thm}\label{thm:formal}
The following conditions on a sequence $\sigma$ with $\sigma(1)>1$
are equivalent:
\begin{problist}
\item 
$\sigma = \sigma_\A$ for some CGA $\A$,
\item  
$\sigma(k+1) \geq  {k+1\over k}\thinspace \sigma(k)$ for each $k$,
\item  
$\sigma = \sigma_W$ where 
 $W= \bigvee P_i$ and   $P_i = \prod S^{n_j}$ 
is a product of spheres, and
\item 
$\sigma= \sigma_X$ for some formal space $X$.
\end{problist}
\end{thm}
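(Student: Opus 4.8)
The plan is to prove the cycle of implications $(a)\Rightarrow(b)\Rightarrow(c)\Rightarrow(d)\Rightarrow(a)$, with the bulk of the real work living in the purely algebraic step $(a)\Rightarrow(b)$ and the geometric construction $(b)\Rightarrow(c)$; the remaining two implications are essentially bookkeeping on top of results already in the paper. First I would record the trivial reductions: any $\sigma$ arising as $\sigma_\A$ or $\sigma_W$ or $\sigma_X$ is automatically a strictly increasing sequence with $\sigma(0)=0$, and the hypothesis $\sigma(1)>1$ just says $\A^1=0$ (resp.\ that the relevant spaces are simply-connected), so throughout we may assume $\sigma$ is strictly increasing with $\sigma(0)=0<1<\sigma(1)$. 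It is also convenient to note $(d)\Rightarrow(a)$ is immediate from \fullref{prop:formal}: if $X$ is formal then $\sigma_X=\sigma_{H^*(X)}$, so we may take $\A=H^*(X)$.

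For $(a)\Rightarrow(b)$, suppose $\sigma=\sigma_\A$ and fix $k$; write $n=\sigma_\A(k)$, so there is a nontrivial $k$-fold product $u_1\cdots u_k\in\A^n$, with each $u_i\in\A^{d_i}$ of positive degree and $\sum d_i=n$. The point is that the $(k{+}1)$-fold products that we can form are constrained: the smallest degree in which a nontrivial $(k{+}1)$-fold product can possibly occur is at least the smallest degree of a generator times $k+1$, but more usefully, from the nontrivial $k$-fold product $u_1\cdots u_k$ of degree $n$ we can extract — by pigeonhole on the degrees $d_1,\dots,d_k$ — sub-products of $k$ of the $u_i$'s among any $k+1$ generators we might use. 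Concretely: any nontrivial $(k{+}1)$-fold product $v_1\cdots v_{k+1}\in\A^m$ yields $k+1$ nontrivial $k$-fold products, namely $v_1\cdots\hat v_i\cdots v_{k+1}$, each of degree $m-\deg(v_i)$, hence each $\geq\sigma_\A(k)=n$; averaging the $k+1$ inequalities $m-\deg(v_i)\geq n$ and using $\sum_i\deg(v_i)=m$ gives $(k+1)m-m\geq(k+1)n$, i.e.\ $km\geq(k+1)n$, which is exactly $\sigma(k+1)\geq\frac{k+1}{k}\sigma(k)$. This averaging trick is the heart of the matter and I expect it to be the main conceptual step, though it is short once set up.

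For $(b)\Rightarrow(c)$, I would build $W$ directly from the sequence. Given a strictly increasing $\sigma$ with $\sigma(1)>1$ satisfying $(b)$, inductively choose, for each $k$ with $\sigma(k)<\infty$, a product of spheres $P_k=S^{n_1}\times\cdots\times S^{n_k}$ of top dimension $n_1+\cdots+n_k=\sigma(k)$, arranged so that $\sigma_{P_k}(j)\leq\sigma(j)$ for all $j\leq k$ (this is where $(b)$ is used: it guarantees enough room between consecutive values of $\sigma$ to fit in a new sphere of the required dimension, using \fullref{cor:WOS}, which computes $\sigma_{P_k}$ exactly). Then set $W=\bigvee_k P_k$. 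By \fullref{prop:wedgeretract}(a), $\sigma_W(j)=\min_k\sigma_{P_k}(j)$; the construction forces $\sigma_{P_k}(k)=\sigma(k)$ with all smaller-index contributions dominated, so $\sigma_W=\sigma$. I expect this step to require the most care in the bookkeeping — precisely checking that $(b)$ is exactly the combinatorial condition allowing the inductive choice of sphere dimensions, and that no $P_k$ accidentally makes $\sigma_W$ smaller than $\sigma$ at some index — but it is routine rather than deep.

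Finally $(c)\Rightarrow(d)$: a wedge of products of spheres is a formal space (products of spheres are formal, being intrinsically formal with free cohomology, and a wedge of formal spaces is formal), and its rationalization $W_\QQ$ is a simply-connected formal rational space with $\sigma_{W_\QQ}=\sigma_W$ by \fullref{prop:cuplength} and \fullref{prop:model} (or directly from \fullref{prop:formal}), since rationalization does not change $\sigma$ here as all the relevant cohomology is torsion-free and concentrated appropriately. This closes the cycle. The one place I would be most careful is ensuring the standing hypothesis $\sigma(1)>1$ is genuinely needed and used — it rules out $\sigma(1)=1$ (an $S^1$ summand), keeping everything simply-connected so that \fullref{prop:formal} and the skeletal machinery of Section 2 apply without caveat.
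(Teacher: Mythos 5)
Your cycle of implications is the same as the paper's, and most of it is sound: (d)$\Rightarrow$(a) and (c)$\Rightarrow$(d) match the paper's argument, and your proof of (a)$\Rightarrow$(b) --- deleting each factor $v_i$ in turn from a nontrivial $(k{+}1)$-fold product of degree $m$ to obtain $k+1$ nontrivial $k$-fold products of degrees $m-|v_i|\geq\sigma_\A(k)$, then summing these inequalities --- is correct, and is in fact a slightly cleaner packaging than the paper's version, which orders the factors by degree and argues via the largest one.

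The gap is in (b)$\Rightarrow$(c), in two respects. First, your dominance condition is stated backwards: you require $\sigma_{P_k}(j)\leq\sigma(j)$ for $j\leq k$, but since $\sigma_{\bigvee_k P_k}(j)=\min_k\sigma_{P_k}(j)$ and the summand $P_j$ already achieves $\sigma_{P_j}(j)=\sigma(j)$, what you must arrange is $\sigma_{P_k}(j)\geq\sigma(j)$ for \emph{every} $k$ and every $j\leq k$; a single summand dipping below $\sigma$ at some index ruins the wedge. As written, your condition admits bad choices: for $\sigma=(0,3,6)$ both $S^2\times S^4$ and $S^3\times S^3$ satisfy your inequality at $j=1$, but wedging in $S^2\times S^4$ gives $\sigma_W(1)=2\neq 3$. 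Second, the existence of a $k$-fold product of spheres with top dimension $\sigma(k)$ whose sequence dominates $\sigma$ up to index $k$ is exactly where hypothesis (b) is consumed, and you assert it (``enough room between consecutive values'') rather than prove it. The paper isolates this as \fullref{lem:WPS}: write $\sigma(k)=kx+r$, form $P_k$ from $s=k-r$ spheres of dimension $x$ and $r$ spheres of dimension $x+1$, so that $\sigma_{P_k}$ is the ``optimal'' sequence $\tau$ with the largest possible partial sums, and then show by induction that any $\sigma$ satisfying (b) with $\sigma(k)=\tau(k)$ must satisfy $\sigma\leq\tau$ (if $\sigma(j)>\tau(j)$ then $\sigma(j+1)>\tau(j)+(x+1)\geq\tau(j+1)$, and the discrepancy propagates to index $k$, contradicting $\sigma(k)=\tau(k)$). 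That induction is the real content of the implication and needs to be supplied.
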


Before proceeding to the proof of \fullref{thm:formal} 
we need to establish a technical result about sequences.
Let $0< k \leq n$ be integers, write $n = kx + r$
with $0\leq r < k$ and let $r + s = k$.
Define $\tau$ to be the  sequence
whose finite values are
$$
\tau = ( 0, x, 2x, \ldots, sx, sx +(x+1), sx + 2(x+1), \ldots,
\underbrace {sx + r(x+1)}_n ).
$$
We call $\tau$ the  
\textit{optimal $k$--term sequence with $\tau(k) = n$}.  

\begin{lem}  \label{lem:WPS}
Assume that $\sigma$ is a sequence satisfying condition (b)
of \fullref{thm:formal}, and that $\sigma(k) < \infty$.  
Let $\tau$ be the optimal $k$--term sequence
with $\tau(k) = \sigma(k)$.  Then  $\sigma  \leq \tau $.
\end{lem}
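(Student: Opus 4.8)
The plan is to prove the claim by downward induction on the index $j$, starting from $j=k$ (where $\sigma(k)=\tau(k)$ by construction, so $\sigma(j)\leq\tau(j)$ trivially) and working down to $j=0$. The engine of the induction is the hypothesis (b) of \fullref{thm:formal}, which I will use in the contrapositive-flavoured form $\sigma(j)\leq \frac{j}{j+1}\,\sigma(j+1)$: if I already know $\sigma(j+1)\leq\tau(j+1)$, then $\sigma(j)\leq\frac{j}{j+1}\tau(j+1)$, and it remains to check that $\lceil \frac{j}{j+1}\tau(j+1)\rceil \leq \tau(j)$ (using that $\sigma$ takes integer values, so I may round down the real bound). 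So the whole lemma reduces to a purely arithmetic statement about the explicitly defined optimal sequence $\tau$.

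First I would record the closed form of $\tau$ cleanly: writing $n=kx+r$ with $0\le r<k$ and $s=k-r$, one has $\tau(j)=jx$ for $0\le j\le s$ and $\tau(j)=sx+(j-s)(x+1)=jx+(j-s)$ for $s\le j\le k$. I would then verify directly that $\tau$ itself satisfies (b), i.e. $\tau(j+1)\ge\frac{j+1}{j}\tau(j)$ for each $j<k$; equivalently $j\,\tau(j+1)\ge (j+1)\,\tau(j)$. This splits into the three cases $j+1\le s$, $j< s < j+1$ (impossible since $s$ is an integer, so really $j=s-?$—I should say: $j+1=s$ vs. $j=s$), and $j\ge s$, and in each case it is a one-line inequality among the linear formulas above; the only mildly delicate point is the transition index around $j=s$, where the slope of $\tau$ jumps from $x$ to $x+1$. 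Establishing this "$\tau$ satisfies (b)" fact is the real content, because once I know it, the inductive step is immediate: from $\sigma(j+1)\le\tau(j+1)$ I get $\sigma(j)\le\frac{j}{j+1}\sigma(j+1)\le\frac{j}{j+1}\tau(j+1)\le\tau(j)$, where the last inequality is exactly condition (b) for $\tau$, and $\sigma(j)$ being an integer lets me keep the inequality after the rounding that may be needed when $\tau(j+1)$ is not divisible by $j+1$.

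The step I expect to be the main obstacle is handling the rounding correctly at the slope-transition index and, more generally, making sure the chain $\sigma(j)\le\frac{j}{j+1}\tau(j+1)$ combined with integrality really does land at or below $\tau(j)$ rather than $\tau(j)+1$. Concretely: $\frac{j}{j+1}\tau(j+1)$ need not be an integer, so I actually need $\sigma(j)\le\big\lfloor\frac{j}{j+1}\tau(j+1)\big\rfloor$ and then $\big\lfloor\frac{j}{j+1}\tau(j+1)\big\rfloor\le\tau(j)$; since $\tau(j)$ is itself an integer, this follows from the real inequality $\frac{j}{j+1}\tau(j+1)\le\tau(j)$, which is "(b) for $\tau$". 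So no genuine loss occurs, but I should be explicit that the integrality of $\sigma$ is what makes the argument go through, and I should double-check the boundary case $j=k-1$ where $\tau(k)=n=\sigma(k)$ is the given starting value. Finally I would remark that the induction terminates at $j=0$ with $\sigma(0)=0=\tau(0)$, consistent with \fullref{rem:increasing}(b), so no separate base-of-descent argument is needed, and the case $\sigma(k)=\infty$ is excluded by hypothesis so every quantity in sight is finite.
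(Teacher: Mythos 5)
Your proposal is correct, but it is organized differently from the paper's argument. The paper proves the lemma by contradiction in the \emph{upward} direction: if $\sigma(j)>\tau(j)$ for some $j\leq k$, then $\sigma(j)\geq\tau(j)+1$ and $\frac{1}{j}\sigma(j)>x$ (because $\tau(j)\geq jx$), so condition (b) gives $\sigma(j+1)\geq\sigma(j)+\frac{1}{j}\sigma(j)>\tau(j)+(x+1)\geq\tau(j+1)$; iterating up to $j=k$ contradicts $\sigma(k)=\tau(k)$. That argument needs only the two immediate observations $\tau(j)\geq jx$ and $\tau(j+1)-\tau(j)\leq x+1$, and requires no case split at the transition index. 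Your route instead isolates the key fact that $\tau$ itself satisfies condition (b) --- in effect, that the optimal sequence is the pointwise-largest formal sequence with the prescribed value at $k$ --- and then descends directly via $\sigma(j)\leq\frac{j}{j+1}\sigma(j+1)\leq\frac{j}{j+1}\tau(j+1)\leq\tau(j)$. That key fact is true and your induction closes: the verification reduces to the two regimes $j+1\leq s$ and $j\geq s$ (which together cover all indices, so your worry about a middle case is moot), with equality in the first regime and the inequality $s\geq 0$ in the second; it is also consistent with the paper's implication (a)$\Rightarrow$(b) applied to $\tau=\sigma_P$ for the product of spheres $P$ appearing later in the proof of \fullref{thm:formal}. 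Two small simplifications: the rounding discussion is unnecessary, since the real-number chain $\sigma(j)\leq\frac{j}{j+1}\tau(j+1)\leq\tau(j)$ already compares integers without any floor; and the endpoint $j=0$ is as harmless in your version as in the paper's. What your framing buys is a cleaner conceptual statement (maximality of $\tau$ among formal sequences with $\tau(k)=n$); what it costs is the explicit case analysis on the formula for $\tau$ that the paper's coarser estimates avoid.
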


\begin{proof}
This is clearly true for $j >k$, because $\tau(j) = \infty$ for such 
$j$.
If $\sigma(j)  > \tau(j)$ for some $j\leq k$, then   $\sigma(j)\geq \tau(j) +1$, 
and so
$$
\sigma(j+1) \geq     \tfrac1j\sigma(j)+ \sigma(j)
\geq  \tfrac1j\sigma(j)+ (\tau(j) + 1)
$$
Now 
$\sigma(j) > \tau(j) \geq jx$,
so ${1\over j}\sigma(j) > x$.
Therefore 
$$
\sigma(j+1) > \tau(j) + (x+1)  \geq \tau(j+1).
$$  
Inductively,  we see that 
$\sigma(l) > \tau(l)$ for all $i\leq l\leq k$, which contradicts the 
hypothesis $\sigma(k) = \tau(k)$.
\end{proof}

\begin{proof}[Proof of \fullref{thm:formal}]
We begin by  proving that (a) implies (b).
Let $\A$ be a CGA such that $\sigma= \sigma_\A$.
If $\sigma= (0,n)$ has length $1$, then there is nothing to prove,
so we proceed by induction, assuming that the implication is 
valid for sequences of length $\leq k$.    Write $n = \sigma(k+1) 
= \sigma_\A(k+1)$.
Then there is a nontrivial product $x_1x_2 \cdots x_{k+1} \in \A^n$,
where we write  the terms in order so that 
$|x_1| \leq |x_2|\leq \cdots \leq |x_{k+1}|$.    For $j\leq  {k+1}$ we have
$$
x_1 x_2 \cdots x_j \neq 0 \in \A^{|x_1| + |x_2|+ \cdots + |x_j|},
$$
so $\sigma_{\A}(j) \leq |x_1| + |x_2|+ \cdots + |x_j|$ for each $j$.
Since $\sigma_{\A}({k+1}) = |x_1| + |x_2|+ \cdots + |x_{k+1}|$ 
by construction, we have 
$$
\begin{array}{rcl}
\sigma_\A({k+1}) - \sigma_\A(k)  
&\geq &  (|x_1| +   \cdots + |x_{k+1}|) -(|x_1| +   \cdots + |x_{k}|)
\\
&=& |x_{k+1}|
\\
&=& {1\over {k}}
\overbrace{( |x_{k+1}| + |x_{k+1}| + \cdots + |x_{k+1}|)}^{k\ \mathrm{terms}}
\\
&\geq&  {1\over {k}}( |x_1| + |x_2|+ \cdots +|x_{k}|)
\\
&\geq& {1\over {k}} \sigma_\A(k),
\end{array}
$$
which proves the result. 

Next we prove   that  (b) implies (c)
by induction on the length
$k$ of the sequence $\sigma$.  If $\sigma = (0,n)$,
then $\sigma = \sigma_{S^n}$ and the result holds.     Suppose 
now that the result is known for all sequences with length $\leq k$,
and let $\sigma$ be a sequence with length ${k+1}$.  Write $\bar\sigma$
for the sequence 
$$
\bar\sigma (j) = \left\{ 
\begin{array}{ll}
\sigma (j) & \mathrm{if}\ j\leq  k
\\
\infty & \mathrm{if}\   j > k.
\end{array}
\right.
$$
Since $\mathrm{length} (\bar\sigma) \leq k$, we can 
apply the inductive hypothesis, to find a wedge of products
of spheres $W$ such that $\sigma_W = \bar\sigma$.    Let $\tau$ 
be the optimal $({k+1})$--term sequence
with $\tau({k+1}) = \sigma({k+1})$,  and define
$$
P = \overbrace{S^{x} \cross S^x \cross \cdots \cross S^x}^{s\ 
\mathrm{factors}}
\cross 
\overbrace{
 S^{x+1} \cross S^{x+1} \cross \cdots \cross S^{x+1}
}^{r\ 
\mathrm{factors}} .
$$ 
Then $\sigma_P = \tau$ by \fullref{cor:WOS},
and \fullref{prop:wedgeretract} shows that
$$
\sigma_{W\wdg P} (j) = \min\{ \sigma_W(j), \sigma_P(j)\}
$$
for all $j$.   For $j\leq k$, we have $\sigma_W(j) = \sigma(j) \leq \tau(j)
= \sigma_P(j)$ by \fullref{lem:WPS}, so   $\sigma_{W\wdg P} = \sigma(j)$
for $j< k$ by \fullref{prop:wedgeretract}(a).    
Also $\sigma_P({k+1}) = \sigma({k+1}) < \infty = \sigma_W({k+1})$,
so $\sigma_{W\wdg P}({k+1}) = \sigma ({k+1})$.

The implication (c) $\Rightarrow$ (d) 
follows from the fact that the  rationalization of a wedge of products
of spheres is formal.

According to \fullref{prop:formal},
if $X$ is a formal rational space, then $\sigma_X  = \sigma_{H^*(X)}$.
Thus (d) implies (a).
\end{proof}

In view of \fullref{thm:formal}, we define a 
\term{formal sequence} to be any
sequence $\sigma$ which satisfies the condition
$$
\sigma(k+1) \geq {k+1\over k} \sigma(k)\qquad \mathrm{for\ all}\ k.
$$ 

It is \textit{not} true that every formal space is isosequential 
with its minimal model.  For example,   the minimal model of 
$S^4$ is  $\Lambda(x_4,x_7)$, so 
$$
\sigma_{S^4} = (0,4) > (0,4,8,12, \ldots ) = \sigma_{\M(S^4)}.
$$

Our study of formal sequences grew out of a simple question:
is   every simply-connected rational space  isosequential
with a product of spheres, or a wedge of products of spheres, or
a product of wedges of products of spheres, etc?

Any space constructed from spheres by 
repeatedly taking products and wedges
is automatically formal \cite[Example 5.4]{CLOT}.
Using \fullref{thm:formal}, we see that any such space is isosequential with a simple
wedge of products of spheres.
Furthermore, \fullref{thm:formal} 
reveals that our  original question
reduces to asking whether or not $\sigma_X$ is a formal sequence
whenever $X$ is a rational space.    We have already seen that this is
not the case!

\begin{authex}
{\rm
The space $X$ of \fullref{ex:1}(b) is a rational space 
whose categorical sequence is $\sigma_X = (0,3,8,11)$.  
Since $11 < {3\over 2}\cdot 8$,  $\sigma_X$ is not a formal
sequence.  By  \fullref{thm:formal},
$X$ is not isosequential with any wedge of products of spheres.
}
\end{authex}

\section{Products}

For two sequences $\sigma$ and $\tau$, we 
define a new sequence $\sigma*\tau$ by
$$
\sigma*\tau (k) = \min\{ \sigma(i) + \sigma(j)\, |\, i+j = k\}.
$$
Our goal in this section is to prove a result linking the
sequences  $\sigma_{X\cross Y}$  and $\sigma_X * \sigma_Y$.
 When the spaces in question are formal, this is not hard to do.

\begin{prop}  
Let $\A$ and $\B$ be simply-connected CGAs and let 
$X$ and $Y$ be simply-connected formal rational spaces. 
Then
\begin{problist}
\item   
$\sigma_{\A\tensor \B} = \sigma_\A * \sigma_\B $, and 
\item
$\sigma_{X\cross Y} = \sigma_X * \sigma_Y$.
\end{problist}
\end{prop}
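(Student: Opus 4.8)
The plan is to prove the algebraic identity (a) directly from the definition of the product length sequence and then deduce (b) from (a) together with \fullref{prop:formal} and the K\"unneth theorem. I would start with the inequality $\sigma_{\A\tensor\B}\leq\sigma_\A*\sigma_\B$. Fix $i+j=k$ with $\sigma_\A(i)=p$ and $\sigma_\B(j)=q$ finite, witnessed by nontrivial products $u_1\cdots u_i\neq 0$ in $\A^p$ and $v_1\cdots v_j\neq 0$ in $\B^q$ whose factors lie in the respective augmentation ideals. The $k$ elements $u_1\tensor 1,\dots,u_i\tensor 1,\,1\tensor v_1,\dots,1\tensor v_j$ all lie in the augmentation ideal of $\A\tensor\B$, and because one tensor slot of every factor is the unit, no Koszul signs intervene and their product is $(u_1\cdots u_i)\tensor(v_1\cdots v_j)$, which is nonzero since $\QQ$ is a field. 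This is a nontrivial $k$-fold product in total degree $p+q$, so $\sigma_{\A\tensor\B}(k)\leq\sigma_\A(i)+\sigma_\B(j)$, and minimizing over $i+j=k$ gives the claim.

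For the reverse inequality I would set $n=\sigma_{\A\tensor\B}(k)$ and choose a nontrivial $k$-fold product $z_1\cdots z_k\neq 0$ in $(\A\tensor\B)^n$. Passing first to homogeneous components of the $z_s$, then to components of a single bidegree in $\A^{p_s}\tensor\B^{q_s}$, and then expanding each such component as a sum of decomposable tensors and multiplying everything out, nonvanishing of the product forces a surviving term: elements $a_s\in\A^{p_s}$ and $b_s\in\B^{q_s}$ with $\big(\prod_s a_s\big)\tensor\big(\prod_s b_s\big)\neq 0$, hence $\prod_s a_s\neq 0$ in $\A$ and $\prod_s b_s\neq 0$ in $\B$, where $p_s+q_s=d_s\geq 1$ for each $s$ and $\sum_s d_s=n$. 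Setting $I=\{s:p_s\geq 1\}$ and $J=\{s:q_s\geq 1\}$, one has $I\cup J=\{1,\dots,k\}$, while $\prod_{s\in I}a_s$ is a nontrivial $|I|$-fold product, so $\sigma_\A(|I|)\leq\sum_s p_s$, and similarly $\sigma_\B(|J|)\leq\sum_s q_s$; adding these gives $\sigma_\A(|I|)+\sigma_\B(|J|)\leq n$ with $|I|+|J|\geq k$. Since the product length sequences are non-decreasing and vanish at $0$, I can then choose $i\leq|I|$, $j\leq|J|$ with $i+j=k$ and conclude $\sigma_\A*\sigma_\B(k)\leq\sigma_\A(i)+\sigma_\B(j)\leq n$.

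Part (b) follows quickly: $X\cross Y$ is again simply-connected, rational, and formal (the tensor product of models for $X$ and $Y$ is a model for $X\cross Y$), and $H^*(X\cross Y;\QQ)\cong H^*(X;\QQ)\tensor H^*(Y;\QQ)$ as graded algebras by K\"unneth. Hence \fullref{prop:formal} gives $\sigma_{X\cross Y}=\sigma_{H^*(X\cross Y)}=\sigma_{H^*(X)\tensor H^*(Y)}$, part (a) rewrites this as $\sigma_{H^*(X)}*\sigma_{H^*(Y)}$, and a second application of \fullref{prop:formal} identifies it with $\sigma_X*\sigma_Y$.

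I expect the one delicate point to be the bookkeeping in the reverse inequality of (a): reducing a general augmentation-ideal element of a tensor product to decomposable tensors and then counting how many of the $k$ factors contribute a positive-degree term on the $\A$-side and how many on the $\B$-side. A single factor can contribute on both sides, which is precisely why one obtains only $|I|+|J|\geq k$ rather than equality; monotonicity of the product length sequences absorbs this slack, and everything else is routine.
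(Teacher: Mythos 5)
Your proposal is correct and matches the paper: part (b) is proved by exactly the same chain $\sigma_{X\cross Y}=\sigma_{H^*(X\cross Y)}=\sigma_{H^*(X)\tensor H^*(Y)}=\sigma_{H^*(X)}*\sigma_{H^*(Y)}=\sigma_X*\sigma_Y$ using \fullref{prop:formal} and the K\"unneth theorem. The paper simply omits the proof of (a) as ``easy''; your two-sided argument (the tensor-of-witnesses construction for $\leq$, and the bidegree/decomposable expansion with the sets $I,J$ and monotonicity of the product length sequences for $\geq$) is a correct filling-in of that omission.
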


\begin{proof}
We omit the easy proof of (a), and use it to prove (b)
as follows: since $X$, $Y$ and $X\cross Y$ are each formal and rational, 
$$\sigma_{X\cross Y} =  \sigma_{H^*(X\cross Y)}
  = \sigma_{H^*(X) \otimes H^*(Y)}
  = \sigma_{H^*(X)} * \sigma_{H^*(Y)}
  =  \sigma_X * \sigma_Y$$
by \fullref{prop:formal}.
\end{proof}

The following conjecture  seems quite plausible.
 
\begin{conj}\label{conj:1}  
For simply-connected rational  $X$ and $Y$, 
$\sigma_{X\cross Y} =  \sigma_X * \sigma_Y$.
\end{conj}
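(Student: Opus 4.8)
The plan is to prove the two inequalities separately. One of them, $\sigma_{X\cross Y}\le\sigma_X*\sigma_Y$, is \fullref{thm:product}, so the real content is the reverse inequality $\sigma_X*\sigma_Y\le\sigma_{X\cross Y}$. Fix $k$ and set $N=\sigma_X*\sigma_Y(k)=\min\{\sigma_X(i)+\sigma_Y(j)\mid i+j=k\}$; after disposing of the trivial case $N=\infty$ via the classical product theorem for $\cat$, assume $N<\infty$. It then suffices to prove $\cat_{X\cross Y}\bigl((X\cross Y)_{N-1}\bigr)\le k-1$, since this gives $\sigma_{X\cross Y}(k)\ge N$.

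First I would carry out a cell count. In the product CW structure, $(X\cross Y)_{N-1}=\bigcup X_p\cross Y_q$, the union taken over all $p+q\le N-1$; each subcomplex $X_p\cross Y_q$ maps to $X\cross Y$ by the product $i_p\cross i_q$ of the skeletal inclusions $i_p\co X_p\to X$ and $i_q\co Y_q\to Y$. The key observation is that for every such $(p,q)$ one has $\cat_X(X_p)+\cat_Y(Y_q)\le k-1$: writing $s=\cat_X(X_p)$ and $t=\cat_Y(Y_q)$, we have $\sigma_X(s)\le p$ and $\sigma_Y(t)\le q$ by the definition of $\sigma$, hence $\sigma_X(s)+\sigma_Y(t)\le p+q\le N-1<N$; therefore $s+t<k$, for otherwise the monotonicity of $\sigma_X$ and $\sigma_Y$ would let us choose $i\le s$, $j\le t$ with $i+j=k$ and $\sigma_X(i)+\sigma_Y(j)<N$, contradicting the definition of $N$. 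So each piece $X_p\cross Y_q$ is categorical of level $\le k-1$ in $X\cross Y$, at least once one has the product estimate $\cat(i_p\cross i_q)\le\cat(i_p)+\cat(i_q)$; rationally this is supplied by \fullref{prop:WellDefined}(c), \fullref{prop:Q=M}, and Parent's additivity formula $\Mcat(f\cross g)=\Mcat(f)+\Mcat(g)$ \cite{Parent}.

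The crux is then to merge the roughly $N$ ``staircase'' pieces $X_m\cross Y_{N-1-m}$ into exactly $k$ open sets, each categorical in $X\cross Y$, by grouping them according to the value $\cat_X(X_m)+\cat_Y(Y_{N-1-m})\in\{0,1,\dots,k-1\}$. Via the G\,W~Whitehead (fat wedge) description of category, this should amount to producing, simultaneously on all the pieces, a coherent ``regrouping'' factorization of $\Delta_k\co X\cross Y\to(X\cross Y)^k$ through the fat wedge $T_k(X\cross Y)$, assembled from the pairwise-disjoint prisms that make up each group --- in other words, a version of Fox's lemma for families of subcomplexes indexed by the skeletal filtration of $X\cross Y$.

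That last step is where I expect the real difficulty to lie: carrying out the Fox regrouping \emph{coherently across the cells of the skeleton}, while being constrained to work over $X\cross Y$ rather than over $(X\cross Y)_{N-1}$ and with pieces that are not themselves skeleta of the product. There is no obvious algebraic shortcut, because by \fullref{ex:1}(b) a rational space need not possess any model $\A$ with $\sigma_\A=\sigma_X$, so one cannot simply transport the identity $\sigma_{\A\tensor\B}=\sigma_\A*\sigma_\B$ of the preceding proposition to $X\cross Y$. If the coherent gluing proves intractable in general, a reasonable fallback is to establish the inequality whenever one of $X$, $Y$ admits a model realizing its categorical sequence --- which already includes the formal case handled in the preceding proposition --- and to isolate an $\Mcat$--Künneth statement for skeletal inclusions as the missing general ingredient.
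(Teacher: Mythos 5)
This statement is \fullref{conj:1}, which the paper explicitly leaves open: the authors prove only the inequality $\sigma_{X\cross Y}\leq\sigma_X*\sigma_Y$ (\fullref{thm:product}) and state that they have been unable to prove the equality, observing that it would follow from \fullref{thm:product} together with \fullref{conj:genprod}. So there is no proof in the paper to compare yours against, and your proposal does not close the gap either. Your reduction is sound as far as it goes: the $\leq$ half is \fullref{thm:product}; for the reverse you correctly reduce to showing $\cat_{X\cross Y}\bigl((X\cross Y)_{N-1}\bigr)\leq k-1$, and your ``key observation'' that $\cat_X(X_p)+\cat_Y(Y_q)\leq k-1$ whenever $p+q\leq N-1$ is a correct consequence of the definition of $N$ and the monotonicity of the sequences. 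But the step you flag as difficult --- assembling the staircase of pieces $X_p\cross Y_q$ into $k$ sets, each categorical in $X\cross Y$, by a Fox-type regrouping carried out coherently along the skeletal filtration --- is not a technical loose end; it is precisely the content of \fullref{conj:genprod} in the rational case, that is, the open problem itself. Knowing that each piece of a cover has relative category $\leq k-1$ bounds the category of the union only by a sum (plus correction terms), not by a maximum, and no coherent choice of categorical coverings of the $X_p$, compatible as $p$ varies, is available in general.

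One smaller point: to bound $\cat_{X\cross Y}(X_p\cross Y_q)$ from above you invoke \fullref{prop:WellDefined}(c) and Parent's formula $\Mcat(i_p\cross i_q)=\Mcat(i_p)+\Mcat(i_q)$. This gives $\Mcat(i_p\cross i_q)=\cat(X_p)+\cat(Y_q)$, but since $\Mcat\leq\cat$ in general and $X_p\cross Y_q$ is not a skeleton of $X\cross Y$, \fullref{prop:WellDefined}(c) does not apply to $i_p\cross i_q$; this chain therefore yields a \emph{lower} bound on $\cat(i_p\cross i_q)$, not the upper bound you need. The upper bound $\cat(i_p\cross i_q)\leq\cat(i_p)+\cat(i_q)$ does hold, but by the elementary covering and regrouping argument for products of maps, not by the $\Mcat$ machinery. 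This is repairable; the coherent regrouping step above is the genuine obstruction, and your fallback suggestion of isolating an $\Mcat$--K\"unneth statement for skeletal inclusions is a reasonable description of what is missing.
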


Unfortunately, we have been unable to prove this.
However, we can prove that there is an inequality relating
these sequences.

\begin{thm}\label{thm:product}
For simply-connected rational  $X$ and $Y$, 
$\sigma_{X\cross Y} \leq  \sigma_X * \sigma_Y$.
\end{thm}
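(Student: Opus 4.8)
The plan is to unwind the definition $(\sigma_X*\sigma_Y)(k)=\min\{\sigma_X(i)+\sigma_Y(j)\mid i+j=k\}$ and reduce to a single pair of indices. It suffices to prove that
$$
\sigma_{X\cross Y}(i+j)\ \le\ \sigma_X(i)+\sigma_Y(j)\qquad\text{for all }i,j\ge 0,
$$
and then take the minimum over $i+j=k$. So fix $i,j$ and put $n=\sigma_X(i)$, $m=\sigma_Y(j)$, which we may assume finite. When $i=0$ or $j=0$ this is immediate from \fullref{prop:wedgeretract}(b), since $Y$, resp.\ $X$, is a homotopy retract of $X\cross Y$; so assume $i,j\ge 1$. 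Unwinding the definition of $\sigma_{X\cross Y}$, what remains to prove is
$$
\cat_{X\cross Y}\bigl((X\cross Y)_{n+m}\bigr)\ \ge\ i+j.
$$

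First I would fix rational skeleta $i\colon X_n\to X$ and $j\colon Y_m\to Y$ and consider the product map $i\cross j\colon X_n\cross Y_m\to X\cross Y$. Since $X_n\cross Y_m$ is a simply-connected rational space with $H^*(X_n\cross Y_m;\QQ)=0$ above degree $n+m$, the obstruction-theoretic argument used in the proof of \fullref{prop:invariant} (which works equally well for rational spaces) shows that $i\cross j$ factors, up to homotopy over $X\cross Y$, through any rational $(n+m)$-skeleton of $X\cross Y$; hence by \cite[1.4]{B-G},
$$
\cat_{X\cross Y}\bigl((X\cross Y)_{n+m}\bigr)\ \ge\ \cat(i\cross j).
$$
Everything is now reduced to the lower bound $\cat(i\cross j)\ge i+j$. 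For this I would pass to rational homotopy invariants. Since $X_n\cross Y_m$ and $X\cross Y$ are simply-connected rational spaces, the evident inequality $\Qcat\le\cat$ and \fullref{prop:Q=M} give $\cat(i\cross j)\ge\Qcat(i\cross j)=\Mcat(i\cross j)$, while Parent's product formula for $\Mcat$ \cite{Parent} gives $\Mcat(i\cross j)=\Mcat(i)+\Mcat(j)$. Finally \fullref{prop:WellDefined} identifies $\Mcat(i)=\cat_X(X_n)$ and $\Mcat(j)=\cat_Y(Y_m)$, and these are $\ge i$ and $\ge j$ respectively by the defining properties of $\sigma_X(i)=n$ and $\sigma_Y(j)=m$. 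Chaining $\cat(i\cross j)\ge\Qcat(i\cross j)=\Mcat(i\cross j)=\Mcat(i)+\Mcat(j)=\cat_X(X_n)+\cat_Y(Y_m)\ge i+j$ finishes the argument.

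The heart of the matter — and the only place rationality is genuinely used — is the inequality $\cat(i\cross j)\ge\cat(i)+\cat(j)$. For arbitrary maps this \emph{fails}: Stanley's examples \cite{Stanley} mentioned in the introduction produce maps $f,g$ of simply-connected rational spaces with $\cat(f\cross g)<\cat(f)+\cat(g)$. What rescues us is that on the product side the relevant category can be computed exactly by $\Mcat$, which \emph{is} additive across products; so the real work is to assemble the chain above and to check at each step that all the spaces in play — $X_n\cross Y_m$, $X\cross Y$, and the skeleta — are bona fide simply-connected rational spaces, so that \fullref{prop:Q=M}, Parent's theorem, and \fullref{prop:WellDefined} all apply. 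The one technical subtlety to handle carefully is the reduction in the second paragraph: that a rational $(n+m)$-skeleton of $X\cross Y$ really does receive $i\cross j$ up to homotopy over $X\cross Y$, i.e.\ the rational version of the standard fact that a map out of a (rationally) $\le(n+m)$-dimensional space lifts through an $(n+m)$-equivalence.
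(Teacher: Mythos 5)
Your argument is correct and is essentially the paper's own proof: both reduce to a single pair $i+j=k$, form the product of skeleta $X_n\cross Y_m\to X\cross Y$, compare it to the $(n+m)$--skeleton of $X\cross Y$, and then get the lower bound $i+j$ on relative category via Parent's additivity of $\Mcat$ together with \fullref{prop:WellDefined}(c). Your version is slightly more explicit about the two points the paper glosses over (the factorization through the $(n+m)$--skeleton, and the chain $\cat\ge\Qcat=\Mcat$ via \fullref{prop:Q=M}), but the route is the same.
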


\begin{proof} 
Let $\sigma_X * \sigma_Y(k) = n$.  Thus there
are  $i$ and $j$ with $i+ j = k$, $\sigma_X(i) = a$,
$\sigma_Y(j) = b$,  and   $a+ b = n$.   Now let 
$i_a\cross i_b \co X_a \cross Y_b \to X\cross Y$ and compute
$$
\begin{array}{rclcl}
\cat( (X\cross Y)_{n})   &\geq&    \cat_{X\cross Y}(X_a \cross Y_b) 
\\
&\geq & \Mcat(i_a \cross i_b)   &\quad &
\text{by\  \fullref{prop:WellDefined}(c)}
\\
&=& \Mcat(i_a) + \Mcat( i_b)   
&\quad & \text{by Parent \cite[Theorem 2]{Parent}}
\\
&= & \cat(X_a) + \cat(X_b ) 
&\quad & \text{by\ \fullref{prop:WellDefined}(c)}\   
\\
&=& k,
\end{array}
$$
which means that 
$\sigma_{X\cross Y} (k)  \leq  n = \sigma_X * \sigma_Y(k)$,
\end{proof}

The inequality of \fullref{thm:product} fails when the 
spaces are not rational, as the following example demonstrates.

\begin{authex}
{\rm
Iwase \cite{Iwase} has constructed a space $X = S^2 \cup D^{10}$
with the property that $\cat(X\cross S^k) = \cat(X) = 2$
for all $k\geq 2$.    The categorical sequences for  $X$ 
and $S^2$ are  
$
\sigma_X = (0, 2, 10, \infty, \ldots)$ and 
$\sigma_{S^2} = (0,2,\infty, \ldots)$, respectively.
Now we have 
$
\sigma_X *\sigma_{S^2}  = (0, 2, 4, 12, \infty, \ldots) 
<(0,2,4,\infty,\ldots)=
\sigma_{X\cross S^2}  .
$
}
\end{authex}

Nevertheless, the following conjecture seems reasonable.

\begin{conj}\label{conj:genprod}
For general spaces $X$ and $Y$, 
$\sigma_{X\cross Y} \geq \sigma_X * \sigma_Y$.
\end{conj}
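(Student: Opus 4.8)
Fix $k$; we must show $\sigma_{X\cross Y}(k)\geq\sigma_X*\sigma_Y(k)$. Write $m:=\sigma_X*\sigma_Y(k)$ (we may assume $m<\infty$; if $m=\infty$ the argument below applies verbatim with $m$ replaced by an arbitrary integer). Since $\sigma_{X\cross Y}(k)\geq m$ is equivalent to $\cat_{X\cross Y}\bigl((X\cross Y)_{m-1}\bigr)\leq k-1$, this is the statement to prove. Fix a decomposition $k=i_0+j_0$ realising the minimum, so $m=\sigma_X(i_0)+\sigma_Y(j_0)$ and $\sigma_X(i)+\sigma_Y(j)\geq m$ for every $i+j=k$. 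The starting point is the product cell structure: with the product CW decomposition, $(X\cross Y)_{m-1}=\bigcup_{a+b=m-1}X_a\cross Y_b$, a union of subcomplexes indexed by $a,b\geq 0$ with $a+b=m-1$.

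The numerology is easy. By the product inequality for the category of maps, $\cat(f\cross g)\leq\cat(f)+\cat(g)$ (which itself comes from the Ganea comparison map invoked below), we get $\cat_{X\cross Y}(X_a\cross Y_b)\leq\cat_X(X_a)+\cat_Y(Y_b)$. Write $c_X(a):=\cat_X(X_a)$; unwinding the definition of $\sigma_X$, and using that relative category is monotone under inclusion of subcomplexes, gives $c_X(a)=\sup\{i:\sigma_X(i)\leq a\}$, and likewise for $c_Y(b)$. If $c_X(a)+c_Y(b)\geq k$ for some $a+b=m-1$, we could choose $i\leq c_X(a)$ and $j\leq c_Y(b)$ with $i+j=k$, whence $\sigma_X(i)+\sigma_Y(j)\leq\sigma_X(c_X(a))+\sigma_Y(c_Y(b))\leq a+b=m-1<m$, contradicting the minimality of $m$. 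Hence $\cat_{X\cross Y}(X_a\cross Y_b)\leq c_X(a)+c_Y(b)\leq k-1$ for every piece.

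The decisive step is to pass from this per-piece bound to the same bound for the whole union; the naive estimate only yields $\cat_{X\cross Y}\bigl((X\cross Y)_{m-1}\bigr)\leq mk-1$. I would attempt this in the spirit of Fox's proof of $\cat(X\cross Y)\leq\cat(X)+\cat(Y)$. Construct nested filtrations $\varnothing=F^X_{-1}\sseq F^X_0\sseq\cdots\sseq F^X_{k-1}$ of $X$ and $\varnothing=F^Y_{-1}\sseq F^Y_0\sseq\cdots\sseq F^Y_{k-1}$ of $Y$ with the properties that (i) each difference $\overline{F^X_r\setminus F^X_{r-1}}$ is null-homotopic in $X$ (and similarly for $Y$), and (ii) $X_a\sseq F^X_{c_X(a)}$ for every $a\leq m-1$. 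Then put $G^t:=\bigl(\bigcup_{r+s\leq t}F^X_r\cross F^Y_s\bigr)\cap(X\cross Y)_{m-1}$. Property (ii) together with the numerology gives $G^{k-1}=(X\cross Y)_{m-1}$, while the $t$th stratum $G^t\setminus G^{t-1}$ is a finite disjoint union --- disjointness in the $X$--coordinate, since the sets $F^X_r\setminus F^X_{r-1}$ ($-1\leq r\leq k-1$) are pairwise disjoint --- of subsets of the products $\bigl(F^X_r\setminus F^X_{r-1}\bigr)\cross\bigl(F^Y_s\setminus F^Y_{s-1}\bigr)$ with $r+s=t$, each of which maps null-homotopically into $X\cross Y$ by (i). Since a finite disjoint union of subspaces null in a space $Z$ is (after the usual shrinking) null in $Z$, the chain $\{G^t\}$ is a Fox-type categorical filtration of $(X\cross Y)_{m-1}$ of length $k-1$, so $\cat_{X\cross Y}\bigl((X\cross Y)_{m-1}\bigr)\leq k-1$, as desired. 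An equivalent formulation replaces the filtrations by coherent Ganea lifts $X_a\to G_{c_X(a)}(X)$ and $Y_b\to G_{c_Y(b)}(Y)$, which one multiplies and pushes forward along the canonical fibrewise map $G_p(X)\cross G_q(Y)\to G_{p+q}(X\cross Y)$, then glues over the cover $(X\cross Y)_{m-1}=\bigcup X_a\cross Y_b$.

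The main obstacle --- and the reason the statement is only a conjecture --- is the construction of the filtrations $\{F^X_r\}$ with property (ii): equivalently, the coherent choice of the Lusternik--Schnirelmann covers (or Ganea lifts) of the successive skeleta $X_{\sigma_X(1)-1}\sseq X_{\sigma_X(2)-1}\sseq\cdots$ so that they assemble into a single filtration realising the relative category of each skeleton with the optimal number of strata. Each individual skeleton $X_{\sigma_X(r+1)-1}$ does carry an $(r{+}1)$--fold categorical cover --- that is exactly the content of $\cat_X(X_{\sigma_X(r+1)-1})=r$ --- but there is no evident reason these covers can be chosen nested in $r$; and the Ganea version is no easier, since two lifts of a fixed map through a fibration need not be vertically homotopic. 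I expect essentially all of the difficulty to be concentrated in this coherence problem. It is worth noting that even for rational spaces, where Parent's product formula $\Mcat(f\cross g)=\Mcat(f)+\Mcat(g)$ \cite{Parent} is available, it does not apply directly here, because a skeleton of a product is a \emph{union} of products of skeleta rather than a single such product; so a successful execution of this plan would, in combination with \fullref{thm:product}, also settle the rational case.
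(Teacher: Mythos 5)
First, be aware that the paper offers no proof of this statement: it appears as \fullref{conj:genprod}, an open conjecture, recorded because together with \fullref{thm:product} it would imply the equality $\sigma_{X\cross Y}=\sigma_X*\sigma_Y$ for rational spaces. So there is no argument in the paper to compare yours against, and any complete proof would be new. Your reduction is sensible and its elementary part is correct: with the product CW structure $(X\cross Y)_{m-1}=\bigcup_{a+b=m-1}X_a\cross Y_b$, the identity $\cat_X(X_a)=\sup\{i : \sigma_X(i)\leq a\}$ and the product inequality for the category of maps do give $\cat_{X\cross Y}(X_a\cross Y_b)\leq k-1$ for each piece, with the minimality of $m$ used exactly where it should be. That per-piece estimate is a genuine (if easy) partial result.

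The gap is the one you name yourself, and it is not a technicality to be smoothed over later --- it is essentially the entire content of the conjecture. A Fox-type assembly requires a single nested filtration $F^X_0\sseq\cdots\sseq F^X_{k-1}$ of $X$ whose first $r+1$ stages simultaneously realize the categorical covers of \emph{every} skeleton $X_a$ with $\cat_X(X_a)=r$; equivalently, Ganea lifts $X_a\to G_{c_X(a)}(X)$ compatible under the inclusions $X_a\inclds X_{a'}$ and the maps $G_r(X)\to G_{r+1}(X)$. Nothing in the definition of $\cat_X(X_a)$ provides this coherence: the optimal cover of a larger skeleton restricted to a smaller one may use more sets than necessary, and two lifts through a Ganea fibration need not be vertically homotopic, so the obstruction to rechoosing them compatibly is nontrivial. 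Until that coherence problem is solved or circumvented --- for instance by an argument at the level of the Whitehead diagonal criterion, which is how the paper proves the superadditivity $\sigma_X(k+l)\geq\sigma_X(k)+\sigma_X(l)$ in \fullref{thm:subadditive} without ever choosing covers --- what you have is a correct and useful reformulation of the conjecture, not a proof of it. A further small caution: gluing the pairwise disjoint strata $(F^X_r\setminus F^X_{r-1})\cross(F^Y_s\setminus F^Y_{s-1})$ into a single categorical set requires them to be separated by open sets, which needs the usual normality and subcomplex bookkeeping from Fox's original argument.
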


 \fullref{conj:genprod},
together with \fullref{thm:product}, 
implies  \fullref{conj:1}.

\section{The Mislin genus of $Sp(3)$}

In this final section we use categorical sequences 
to give a simple proof of a theorem of Ghienne \cite{Ghienne}.  

The \term{Mislin genus} of a nilpotent space $X$ is the set $\G(X)$
 of   homotopy types of 
 nilpotent spaces $Y$ such that the $p$--localizations
$X_{(p)}$ and $Y_{(p)}$ are homotopy equivalent
for every prime $p$.  McGibbon \cite[Section 8]{McG}
asked whether Lusternik--Schnirelmann category is an invariant
of Mislin genus;  that is, if $X\in\G(Y)$, does it follow
that $\cat(X) = \cat(Y)$?  This is known to be false for 
certain infinite-dimensional spaces (see Roitberg \cite{Roitberg}),
but the question remains open for finite complexes  $Y$.

In \cite{Ghienne}, Ghienne proved that McGibbon's conjecture 
holds in the special case $Y = Sp(3)$.  We use a   sequence
computation to give a simple alternative proof of this result.

\begin{thm}[Ghienne]\label{thm:genus}
If  $X\in \G(Sp(3))$, then $\cat(X)  = 5$.
\end{thm}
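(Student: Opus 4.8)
The plan is to combine the elementary categorical-sequence computation sketched after the statement of \fullref{thm:subadditive} with the (known, easier) $p$--local lower bound for $\cat(Sp(3))$. First I would unwind the hypothesis $X\in\G(Sp(3))$: since integral and mod--$p$ cohomology are detected by the collection of $p$--localizations, $X$ is simply--connected with $H^*(X;\ZZ)\cong H^*(Sp(3);\ZZ)=\Lambda(x_3,x_7,x_{11})$, so every $H_n(X;\ZZ)$ is free and $H_n(X;\ZZ)=0$ for $n>21$. By the Universal Coefficient Theorem $H^{22}(X;A)=\Hom(H_{22}(X;\ZZ),A)\oplus\Ext(H_{21}(X;\ZZ),A)=0$ for every $A$, so \fullref{lem:homsection}(b) yields a $21$--dimensional $22$--skeleton of $X$; being a homology isomorphism between simply--connected spaces, this map is a homotopy equivalence, so $X$ has the homotopy type of a $21$--dimensional complex (it may in fact be taken finite, being in the genus of a finite complex). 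In particular $\cat(X)=\mathrm{length}(\sigma_X)$ by \fullref{rem:increasing}(e).

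Next I would run the sequence computation for any such $X$, exactly as in the Introduction. By \fullref{rem:increasing}(b), $\sigma_X(1)=3$; since $H^n(X;A)=0$ for all $A$ when $n\in\{4,5,6\}$, \fullref{thm:subadditive}(b) gives $\sigma_X(2)\ge 7$, and then $\sigma_X(4)\ge\sigma_X(2)+\sigma_X(2)\ge 14$ by \fullref{thm:subadditive}(a). The cup product $H^7(X;A)\otimes H^7(X;B)\to H^{14}(X;A\otimes B)$ is trivial for \emph{every} choice of coefficients --- because $H^6(X;\ZZ)=0$, so that $H^7(X;A)\cong H^7(X;\ZZ)\otimes A$ for all $A$, while $x_7\cup x_7=0$ already integrally --- so if $\sigma_X(4)$ were $14$ then equality would hold in \fullref{thm:subadditive}(a) (forcing $\sigma_X(2)=7$), contradicting \fullref{thm:subadditive}(c); hence $\sigma_X(4)\ge 15$. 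Since $H^n(X;A)=0$ for all $A$ when $n\in\{15,16,17\}$, \fullref{thm:subadditive}(b) improves this to $\sigma_X(4)\ge 18$, and then $\sigma_X(5)\ge\sigma_X(4)+\sigma_X(1)\ge 21$ by \fullref{thm:subadditive}(a). As $X$ is $21$--dimensional and $\sigma_X$ is strictly increasing on its finite values (\fullref{rem:increasing}(c),(e)), this forces $\cat(X)=\mathrm{length}(\sigma_X)\le 5$.

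For the reverse inequality I would recall the easier half of \cite{Ghienne}. The Ganea construction commutes with $p$--localization, so a section of the $k$--th Ganea fibration over $X$ localizes to a section of the $k$--th Ganea fibration over $X_{(p)}$; hence $\cat(X)\ge\cat(X_{(p)})=\cat(Sp(3)_{(p)})$ for every prime $p$. Since $\cat(Sp(3))=5$ while the rationalization $Sp(3)_\QQ\simeq(S^3\cross S^7\cross S^{11})_\QQ$ has category $3$, the value $5$ is realized $p$--locally at some prime $p$, and $X_{(p)}\simeq Sp(3)_{(p)}$ then gives $\cat(X)\ge 5$. Combining the two estimates, $\cat(X)=5=\cat(Sp(3))$.

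The computation is completely routine once \fullref{thm:subadditive} is in hand, so the main obstacle is conceptual rather than technical: the sequence machinery produces \emph{only} the upper bound. \fullref{thm:subadditive} and \fullref{prop:cuplength} bound $\sigma_X$ from above and say nothing about $\sigma_X(4)$ or $\sigma_X(5)$ being finite --- indeed $\sigma_{H^*(X)}(4)=\infty$, since an exterior algebra on three generators carries no nonzero $4$--fold product --- so the lower bound $\cat(X)\ge 5$ genuinely requires the $p$--local input $\cat(Sp(3)_{(p)})=5$, and this is exactly where the full genus hypothesis is used rather than the weaker statement $H^*(X)\cong H^*(Sp(3))$. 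A secondary point of care is that the vanishing feeding \fullref{thm:subadditive}(c) must be verified with all coefficient groups, which is clean here only because $H^6(X;\ZZ)=0$.
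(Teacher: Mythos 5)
Your upper bound $\cat(X)\leq 5$ is correct and is essentially the paper's own argument: the same chain $\sigma_X(1)=3$, $\sigma_X(2)\geq 7$, $\sigma_X(4)\geq 14$, then $\sigma_X(4)>14$ via \fullref{thm:subadditive}(c), then $\sigma_X(4)\geq 18$ and $\sigma_X(5)\geq 21$, with $X$ of the homotopy type of a $21$--dimensional complex. Your extra care about verifying the cup-product vanishing with \emph{all} coefficient groups (using that $H_*(X;\ZZ)$ is free so $H^7(X;A)\cong H^7(X;\ZZ)\otimes A$) is a legitimate and worthwhile precision.

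The lower bound, however, has a genuine gap. From $\cat(Sp(3))=5$ you conclude that ``the value $5$ is realized $p$--locally at some prime $p$,'' i.e.\ that $\cat(Sp(3)_{(p)})=5$ for some $p$. That inference is exactly the statement $\cat(Y)=\max_p\cat(Y_{(p)})$ for the finite complex $Y=Sp(3)$, and no such theorem is available: if it held for finite complexes in general, McGibbon's question (whether $\cat$ is a genus invariant of finite complexes) would be settled affirmatively and trivially, whereas the paper records it as open. All that follows from $\cat(Sp(3))=5$ is $\cat(Sp(3)_{(p)})\leq 5$ for every $p$ --- the wrong direction --- and your own observation that the rational category drops to $3$ shows that category can indeed decrease under localization. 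So your argument for $\cat(X)\geq 5$ is circular as written. It could be repaired either by citing an actual $p$--local computation (the known lower bound for $Sp(3)$ is detected $2$--locally), or, as the paper does, by routing through weak category: $\wcat(Sp(3))=\cat(Sp(3))=5$ by Fern\'andez-Su\'arez--G\'omez-Tato--Strom--Tanr\'e and Iwase--Mimura, and $\wcat$ \emph{is} a genus invariant (for finite complexes the reduced diagonal is null if and only if it is null at every prime), whence $\cat(X)\geq\wcat(X)=\wcat(Sp(3))=5$.
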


\begin{proof}
According to Fern\'andez-Su\'arez, G\'omez-Tato, Strom and Tanr\'e
\cite{FGST}, and Iwase and Mimura \cite{IM}, $\wcat(Sp(3))= \cat(Sp(3)) = 5$.
Since weak category is a genus invariant, we have 
$$
\cat(X) \geq \wcat(X)  = \wcat(Sp(3)) = 5
$$
for any space $X\in \G(Sp(3))$.  It remains to 
show that  $\cat(X) \leq 5$ for every 
$X\in \G(Sp(3))$.  
In fact, we prove the following stronger statement:
 any simply-connected space $X$ whose cohomology ring $H^*(X;\ZZ)$
is isomorphic to 
$H^*(Sp(3);\ZZ)$  must have  $\cat(X) \leq 5$.   

The categorical
sequence $\sigma_X$ clearly  has $\sigma_X(1) = 3$ and 
$\sigma_X(2) \geq 7$ by \fullref{thm:subadditive}(b).  
By \fullref{thm:subadditive}(a),
$\sigma_X(4)\geq \sigma_X(2) + \sigma_X(2) \geq 14$.
Furthermore, $\sigma_X(4) > 14$ by \fullref{thm:subadditive}(c),
because
the cup product $H^7(X) \tensor H^7(X) \to H^{14}(X)$ is trivial.
Now we have $\sigma_X(4) \geq 18$ by \fullref{thm:subadditive}(b),
and hence  $\sigma_X(5) \geq \sigma_X(4) + \sigma_X(1) = 21$.
From this we immediately conclude that  $\cat(X) = \cat(X_{21}) \leq 5$.
\end{proof}

McGibbon's conjecture for finite complexes is equivalent to
the following conjecture for finite type spaces.

\begin{conj}\label{conj:McG}
If $X$ is a nilpotent space of finite type, then
$\sigma_Y = \sigma_X$ for every $Y\in \G(X)$.
\end{conj}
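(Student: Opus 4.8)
The plan is to attack \fullref{conj:McG} by the same mechanism that proved \fullref{thm:genus}, pushed as far as it will go, while isolating the residual difficulty — which, as the paper notes, is nothing less than McGibbon's open question \cite{McG}.

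First I would set up the reduction. If $Y\in\G(X)$ then $X_{(p)}\simeq Y_{(p)}$ for every prime $p$ and $X_\QQ\simeq Y_\QQ$; since $X$ has finite type, all of the data that feeds \fullref{prop:cuplength} and \fullref{thm:subadditive}(b),(c) is detected rationally or one prime at a time, and is therefore literally the same for $X$ and for $Y$: the set of degrees $n$ with $H^n(-;A)\neq 0$ for some coefficient group $A$ (equivalently, with $H_n(-;\ZZ)\neq 0$ or $H_{n-1}(-;\ZZ)$ containing torsion), the set of degrees carrying a nontrivial $k$--fold cup product in some coefficients, and the $A_p$--module structure of each $H^*(-;\ZZ/p)$. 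In particular $X$ and $Y$ have the same dimension up to homotopy, and $\cat(X_\QQ)=\cat(Y_\QQ)$. Because $Y\in\G(X)\iff X\in\G(Y)$ it is enough to prove $\sigma_Y\le\sigma_X$; and because $\sigma$ is built from the ordinary categories of skeleta (\fullref{prop:WellDefined}, \fullref{rem:increasing}) and $X_n,Y_n$ again lie in a common genus, the conjecture is equivalent to genus-invariance of $\cat$ for finite-dimensional complexes.

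Second, the positive engine. I would formalize the $Sp(3)$ computation as a \emph{rigidity principle}: write $L(\A,d)$ for the largest $k$ allowed by \fullref{thm:subadditive}(a)--(c), \fullref{prop:cuplength} and the Steenrod structure for a sequence of length $k$ attached to a graded ring $\A$ realized in dimension $\le d$, and recall that weak category is a genus invariant. Whenever $\wcat(X)=L(H^*(X;\ZZ),\dim X)$, the chain $\wcat(X)\le\cat(X)=\cat(X_{\dim X})\le\dim X$ forces $\cat(X)=\wcat(X)$; both sides of this equation are genus-invariant, so $\cat(Y)=\cat(X)$ for all $Y\in\G(X)$, and running the same squeeze on each skeleton upgrades this to $\sigma_Y=\sigma_X$. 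This recovers \fullref{thm:genus} verbatim and settles \fullref{conj:McG} for every $X$ whose categorical sequence is \emph{forced} by its cohomology ring and dimension — for example (after checking that the obstructing cup products and operations vanish) suitable exterior algebras on sufficiently spread-out generators, and, via \fullref{thm:fibseq} and \fullref{prop:mapping}, spaces built from such by genus-respecting fibrations; the anchor $\cat(X_\QQ)$ from the common rationalization, computable from any model by \fullref{prop:model}, can be fed into the same squeeze.

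The main obstacle is exactly the case where the squeeze is slack — $\wcat(X)<\cat(X)$ and the bounds of \fullref{thm:subadditive} leave $\sigma_X$ underdetermined. There, whether $\cat(X_n)=\cat(X_{n-1})+1$ is decided by the Berstein--Hilton Hopf sets \cite{B-H} of the attaching maps of $X$, and genus equivalence pins these down only one prime at a time; reassembling the local data is a $\lim^1$/phantom phenomenon in the homotopy of the relevant Ganea fibers, which is precisely what makes McGibbon's question hard — and what produces the infinite-dimensional counterexamples of Roitberg \cite{Roitberg}. I do not expect the present methods to remove this last step; the realistic deliverable is the rigidity principle above together with the list of genera it disposes of, leaving the general conjecture — equivalently, genus-invariance of $\cat$ for finite complexes — open.
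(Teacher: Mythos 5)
The statement you were asked to prove is \fullref{conj:McG}, which the paper itself leaves as an open conjecture: it offers no proof, only the remark that the conjecture is equivalent to McGibbon's question on genus-invariance of $\cat$ for finite complexes, and verifications for $X=Sp(2)$ and $X=Sp(3)$. Your proposal is therefore correct in the only sense available --- you do not claim a proof, and you correctly identify that the methods of the paper cannot close the gap. The positive content you extract (symmetry of the genus relation reducing to one inequality, the reduction to genus-invariance of $\cat$ for finite-dimensional complexes, and the ``squeeze'' $\wcat(X)\le\cat(X)\le(\text{upper bound from } \sigma_X)$ with $\wcat$ genus-invariant) is essentially the paper's own Section 7: it is exactly the mechanism of \fullref{thm:genus} and the sentence preceding \fullref{conj:McG}.

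Two small cautions. First, your claim that running the squeeze ``on each skeleton upgrades this to $\sigma_Y=\sigma_X$'' overstates what \fullref{thm:subadditive} and \fullref{prop:cuplength} deliver on their own: in the paper's verification for $\G(Sp(3))$, the value $\sigma_X(3)=10$ is \emph{not} forced by the sequence formalism (the bounds only give $\sigma_X(3)\ge 10$ together with $\cat(X)=5$), and the paper has to import \cite[Theorem 8]{Strom} to pin it down. So even in the rigid cases, determining the full sequence rather than just its length can require input beyond your $L(\A,d)$. Second, the assertion that the data feeding \fullref{thm:subadditive}(c) is ``literally the same'' for $X$ and $Y$ should be phrased prime-by-prime and rationally; the integral cup-product structure itself need not be a genus invariant, so the quantity $L(H^*(X;\ZZ),\dim X)$ should be defined from the localized rings if it is to be genus-invariant as you claim. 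Neither point affects your main (and correct) conclusion that the conjecture remains open.
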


 \fullref{conj:McG} is easily seen to be valid for 
$X = Sp(2)$.   
We can also verify the conjecture for  $X = Sp(3)$.

\begin{cor}
If $X\in \G(Sp(3))$, then $\sigma_X = \sigma_{Sp(3)} = (0,3,7,10,18,21)$.
\end{cor}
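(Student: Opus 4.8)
The plan is to read off all six relevant values of $\sigma_X$ by playing the combinatorial constraints on categorical sequences against the input $\cat(X)=5$ supplied by \fullref{thm:genus}, and then to settle the single value these constraints leave undetermined by a short analysis of the $10$--skeleton. To begin, the integral cohomology ring is a Mislin--genus invariant, so $H^*(X;\ZZ)\cong H^*(Sp(3);\ZZ)=\Lambda(x_3,x_7,x_{11})$; in particular $X$ is a $2$--connected nilpotent space of finite type, $X\simeq X_{21}$, and $\cat(X)=5$ by \fullref{thm:genus}. Since this ring is free abelian and finitely generated in each degree, a Universal Coefficient argument shows $H_*(X;\ZZ)$ is free abelian and concentrated in degrees $\{0,3,7,10,11,14,18,21\}$, so $h(X)=\{3,7,10,11,14,18,21\}$. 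Consequently $\sigma_X$ is strictly increasing on its finite values, each finite positive value lies in $h(X)$ by \fullref{thm:subadditive}(b), $\sigma_X(1)=3$, $\mathrm{length}(\sigma_X)=\cat(X)=5$ (so $\sigma_X(6)=\infty$), and $\sigma_X(5)\le 21$ because $\cat_X(X_{21})=\cat(X)=5$.

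Next I would run the subadditivity estimate \fullref{thm:subadditive}(a) against the bound $\sigma_X(5)\le 21$. From $\sigma_X(2)\ge 2\sigma_X(1)=6$ we get $\sigma_X(2)\ge 7$; and if $\sigma_X(2)\ge 10$ then $\sigma_X(4)\ge 2\sigma_X(2)\ge 20$, forcing $\sigma_X(5)\ge\sigma_X(4)+\sigma_X(1)\ge 23>21$, a contradiction, so $\sigma_X(2)=7$. Then $\sigma_X(4)\ge 2\sigma_X(2)=14$; but $\sigma_X(4)=14$ would give equality in \fullref{thm:subadditive}(a) with $k=l=2$ and hence, by \fullref{thm:subadditive}(c), a nontrivial cup product $H^7(X;A)\tensor H^7(X;B)\to H^{14}(X;A\tensor B)$ — impossible, since $H^*(X;\ZZ)$ is torsion--free and $x_7^2=0$. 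So $\sigma_X(4)\ge 18$, while $21\ge\sigma_X(5)\ge\sigma_X(4)+3$ forces $\sigma_X(4)\le 18$; thus $\sigma_X(4)=18$ and then $\sigma_X(5)=21$. At this stage every value is pinned down except $\sigma_X(3)$, for which we only know $10=\sigma_X(2)+\sigma_X(1)\le\sigma_X(3)<\sigma_X(4)=18$, i.e.\ $\sigma_X(3)\in\{10,11,14\}$; and each of these three is consistent with every sequence identity, so no amount of formal manipulation will decide it.

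The remaining task, then, is $\sigma_X(3)=10$, equivalently $\cat_X(X_{10})\ge 3$, which by \fullref{prop:WellDefined}(a) is the same as $\cat(X_{10})\ge 3$. Here I would argue: $Sp(2)\to Sp(3)$ is a $10$--skeleton of $Sp(3)$ with $i^*$ an isomorphism through degree $10$ (immediate from \fullref{lem:skeleta}); since $H_*(X;\ZZ)$ is free, $X$ has a $10$--skeleton $X_{10}$ with $i^*$ an isomorphism through degree $10$ (\fullref{lem:homsection}(a)); such ``good'' skeleta are unique up to homotopy, and for simply-connected finite-type spaces the construction commutes with $p$--localization, so $(X_{10})_{(p)}$ is a good $10$--skeleton of $X_{(p)}\simeq Sp(3)_{(p)}$, whence $(X_{10})_{(p)}\simeq (Sp(2))_{(p)}$ for every prime $p$ and therefore $X_{10}\in\G(Sp(2))$. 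Now the $Sp(2)$ case of \fullref{conj:McG} — noted as ``easily seen'' just after its statement, and proved by combining the genus-invariance of weak category with $\wcat(Sp(2))=\cat(Sp(2))=3$ — gives $\cat(X_{10})=\cat(Sp(2))=3$. Putting everything together yields $\sigma_X=(0,3,7,10,18,21)$; running the same argument for $Sp(3)$ itself (where the last step collapses to $(Sp(3))_{10}=Sp(2)$) gives $\sigma_{Sp(3)}=(0,3,7,10,18,21)$, so $\sigma_X=\sigma_{Sp(3)}$. I expect the main obstacle to be exactly this final paragraph: everything up to $\sigma_X(3)$ is bookkeeping on top of \fullref{thm:genus}, but pinning $\sigma_X(3)=10$ genuinely requires the two homotopy-theoretic inputs above — that good skeleta of genus-mates are again genus-mates, and the (easier) $Sp(2)$ case of McGibbon's conjecture.
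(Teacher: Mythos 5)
Your proposal is correct, and the bookkeeping half of it (pinning down $\sigma_X(1)=3$, $\sigma_X(2)=7$, $\sigma_X(4)=18$, $\sigma_X(5)=21$ from \fullref{thm:subadditive} together with $\cat(X)=5$ and $h(X)=\{3,7,10,11,14,18,21\}$, leaving only $\sigma_X(3)\in\{10,11,14\}$) is exactly what the paper does by re-running the analysis of \fullref{thm:genus}. Where you genuinely diverge is the one step that formal sequence manipulation cannot settle, namely $\cat(X_{10})=3$. The paper gets this by citing \cite[Theorem 8]{Strom} --- the Hopf-set bootstrapping result mentioned in the introduction --- applied with the input $\sigma_X(2)=7$. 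You instead show that a cohomologically split $10$--skeleton of a genus-mate of $Sp(3)$ is itself a genus-mate of $Sp(2)$, and then invoke the $Sp(2)$ case of \fullref{conj:McG} via genus-invariance of weak category and Schweitzer's $\wcat(Sp(2))=\cat(Sp(2))=3$. Your route has the virtue of being self-contained relative to tools the paper already uses (it needs no input beyond what appears in the proof of \fullref{thm:genus} and the remark that \fullref{conj:McG} holds for $Sp(2)$), and it makes visible how the $Sp(2)$ case of McGibbon's question feeds the $Sp(3)$ case; the price is the auxiliary claim that skeleta with $i^*$ an isomorphism through degree $n$ are unique up to homotopy and compatible with $p$--localization. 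That claim is true for simply-connected finite-type spaces, but you should spell it out: note that $(X_{10})_{(p)}$ is no longer $10$--dimensional, so it is not literally a skeleton in the paper's sense; the honest argument is to lift $Sp(2)\to X_{(p)}$ through the $10$--equivalence $(X_{10})_{(p)}\to X_{(p)}$ by obstruction theory, extend over $Sp(2)_{(p)}$, and check that the resulting map is a $\ZZ_{(p)}$--homology isomorphism, hence an equivalence. With that line added, your proof stands as a legitimate alternative to the paper's appeal to \cite[Theorem 8]{Strom}.
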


\begin{proof}
The proof of \fullref{thm:genus} shows that
if $H^*(X) \cong H^*(Sp(3))$ then $\sigma_X \geq (0,3,7,10,18,21)$.
If $\cat(X) = 5$, then $\sigma_X(5) \leq 21$, and this implies
$\sigma_X(2) = 7$.   Now \cite[Theorem 8]{Strom} implies that 
$\cat(X_{10}) = 3$, and hence $\sigma_X(3) = 10$. 
The analysis used in the proof of \fullref{thm:genus}
shows that  $\sigma_X = (0,3,7,10,18,21)$.
\end{proof}

\bibliographystyle{gtart}
\bibliography{link}

\end{document}